\newcommand{\footremember}[2]{%
	\footnote{#2}
	\newcounter{#1}
	\setcounter{#1}{\value{footnote}}%
}
\newtheorem{Theorem}{Theorem}[section]
\newtheorem{Definition}[Theorem]{Definition} 
\newtheorem{Lemma}[Theorem]{Lemma}	
\newtheorem{Proposition}[Theorem]{Proposition}
\newtheorem{Corollary}[Theorem]{Corollary}
\numberwithin{equation}{section} 
\newcommand{\C}{\ensuremath{\mathbb{C}}}
\newcommand{\R}{\ensuremath{\mathbb{R}}}
\newcommand{\E}{\ensuremath{\mathbb{E}}}
\newcommand{\PP}{\ensuremath{\mathbb{P}}}
\newcommand{\F}{\ensuremath{\mathbb{F}}}
\newcommand{\Z}{\ensuremath{\mathbb{Z}}}
\newcommand{\N}{\ensuremath{\mathbb{N}}}
\newcommand{\Div}{\ensuremath{\operatorname{div}}}
\newcommand{\curl}{\ensuremath{\operatorname{curl}}}
\newcommand{\expbt}{\ensuremath{e^{-i\sum_{l=1}^{N}B_l\beta_l(t) }}}
\newcommand{\expbbt}{\ensuremath{e^{i\sum_{l=1}^{N}B_l\beta_l(t) }}}
\newcommand{\expb}{\ensuremath{e^{-i\sum_{l=1}^{N}B_l\beta_l(s) }}}
\newcommand{\expbb}{\ensuremath{e^{i\sum_{l=1}^{N}B_l\beta_l(s) }}}
\newcommand{\ft}{\ensuremath{\mathcal{F}_t}}
\newcommand{\ind}{\ensuremath{\mathbf{1}}}
\newcommand{\re}{\operatorname{Re}}
\DeclareMathOperator*{\esssup}{\operatorname{ess\,sup}}
\begin{document}
\title{Strong solutions to a nonlinear stochastic Maxwell equation with a retarded material law}
%\author{Zdzislaw Brzezniak  \\
%	Ihr Unternehmen / Universit\"at  \\
%	Teststra\ss e -99 \\
%	0123456 Testhausen \\
%	\and 
%	Der Andere  \\
%	Sein Unternehmen / Universit\"at \\
%	Musterstra\ss e 00 \\
%	6543210 Musterdorf \\
%	}

\author{Luca Hornung\footremember{address}{{Institute for Analysis, Karlsruhe Institute of Technology, Englerstr. 2, D-76128 Karlsruhe, Germany (luca.hornung@kit.edu)}} }

\date{\today}
%\pagestyle{empty}
%Change the numbering of equations
%Do not start new numbering in each section
%\makeatletter
%\@removefromreset{equation}{section}
%\maketitle
%Equation numbering: (Chapter.Equation)
%\numberwithin{equation}{chapter}
%\@removefromreset{Theorem}{section}
%Equation numbering: (Chapter.Equation)
%\numberwithin{Theorem}{chapter}
% Keine Seitenzahlen im Vorspann
        \maketitle
     
    \begin{abstract}
    	\noindent
We study the Cauchy problem for a semilinear stochastic Maxwell equation with Kerr-type nonlinearity and a retarded material law. We show existence and uniqueness of strong solutions using a refined Faedo-Galerkin method and spectral multiplier theorems for the Hodge-Laplacian. We also make use of a rescaling transformation that reduces the problem to an equation with additive noise to get an appropriate a priori estimate for the solution.

\medskip

\noindent
\textbf{Mathematics Subject Classification (2010):} 35Q61, 35R60, 60H15, 34L05, 32A70, 60H30, 76M35

\medskip
\noindent
\textbf{Keywords:}  stochastic Maxwell equations, Kerr-type nonlinearity, retarded material law, monotone coefficients, weak solutions, strong solutions, generalized Gaussian bounds, spectral multi\-plier theorems, Hodge-Laplacian, rescaling transformation
    \end{abstract}

    \section{Introduction}
In this article, we consider the semilinear stochastic Maxwell equation 
\begin{equation}\label{stochastic_maxwell_intro}
\begin{cases}
du(t)&=\big[Mu(t)-|u(t)|^qu(t)+(G\ast u)(t)+J(t)\big]\operatorname{dt}+[b(t)+B(t,u(t))]dW(t),\\
u(0)&=u_0
\end{cases}
\end{equation}
in $  L^{2}(D)^6=L^{2}(D)^3\times L^{2}(D)^3 $ driven by a cylindrical Brownian motion $ W(t) $ with the retarded material law $$ (G\ast u)(t)=\int_{0}^{t}G(t-s)u(s)\operatorname{ds} $$ and the perfect conductor boundary condition $ u_1\times \nu=0 $ on $ \partial D. $ Here, the Maxwell operator is given by
\[ M\binom{u_1}{u_2}=\binom{\curl u_2}{-\curl u_1} \]
for $ 3d $ vector fields $ u_1 $ and $ u_2. $ We allow $ D $ to be a bounded domain or $ D $ might also be the full-space $ \R^{3} $ (in this case the boundary condition drops).

This equation is a model for a stochastic electromagnetic system in weakly-nonlinear chiral media and was derived in \cite{roach_stratis_yannacopoulus} in chapter 2. It originally comes from the deterministic Maxwell system
\begin{equation*}
\begin{cases}
\partial_t(Lu(t))&=Mu(t)+J(t)\\
u(0)&=u_0
\end{cases}
\end{equation*}
with constitutive relation \[ Lu(t,x)=\kappa(x)u(t,x)+\int_{0}^{t}K_1(t-s,x)u(s,x)\operatorname{ds}+\int_{0}^{t}K_2(t-s,x)|u(s,x)|^qu(s,x)\operatorname{ds}. \]
This material law consists of an instantaneous part $ \kappa u $ with a hermitian, uniformly positive definite and uniformly bounded matrix $ \kappa:D\to\C^{6\times 6} $, a linear dispersive part $ K_1\ast u $ and of a nonlinear dispersive part $ K_2\ast |u|^qu. $ This power-type nonlinearity is motivated by the Kerr-Debye model. Note, that in applications, one would only take the nonlinearity $ |u_1|^qu_1 $ or $ |u_2|^qu_2 $ to model a nonlinear polarisation or magnetisation. We take the two quantities together to study both phenomena at once. Using the product rule, we end up with
\begin{equation*}
\begin{cases}
\kappa u'&=Mu-K_1(0)u-K_2(0)|u|^qu-(\partial_tK_1)\ast u-(\partial_tK_2)\ast|u|^qu+J\\
u(0)&=u_0
\end{cases}
\end{equation*}
At this point, we introduce additional simplifications. We assume that the term $ (\partial_tK_2)\ast|u|^qu $ can be neglected. This is typical for a weakly nonlinear medium since one assumes that both the dispersion and the nonlinear effects are weak, so that the combination then satisfies $ (\partial_tK_2)\ast|u|^qu<< K_2(0)|u|^qu. $ Usually one demands $ K_1(0):D\to\C^{6\times 6} $ to be bounded and positive semi-definite and $ K_2(0):D\to \C^{6\times 6} $ to be bounded and uniformly positive definite. But for sake of simplicity, we choose $ K_1(0)\equiv 0 $ and $ K_2(0)\equiv I. $ We just note that the results are unchanged by this simplification and the proofs could be adjusted easily. Moreover, we choose $ \kappa=I $. We must admit that this simplification is necessary at this point since our methods cannot deal with coefficients so far. The problems one has to overcome if $ \kappa\neq I $ are discussed in section $ 6 $ in detail. Setting $ G:=-\partial_tK_1, $ we get a deterministic version of \eqref{stochastic_maxwell_intro}.

In many applications, there is some uncertainty concerning the external sources or the precise behaviour of the medium itself. In these cases, it is useful to model $ u $ as random variables on a probability space $ \Omega $ and impose a stochastic noise perturbation. Here one distinguishes between the additive noise $ b $ perturbing $ J $ and the multiplicative noise $ B(u) $ perturbing the medium. A linear stochastic version of \eqref{stochastic_maxwell_intro} was already discussed in \cite{roach_stratis_yannacopoulus}, chapter $ 12. $ Moreover, in \cite{chen_zhang_preservation_of_physical_properties_stochastic_maxwell}, the authors show that typical conservation laws of linear electromagnetic system are preserved under additive noise perturbation. The authors in \cite{hong_ji_zhang_numeric_stochastic_maxwell} also treat linear stochastic Maxwell equations numerically with energy-conserving methods. More about the application of random media in scattering, wave propagation and in the theory of composites can be found in \cite{bal_waves_in_random_media}, \cite{field2008electromagnetic}, \cite{fouque_wave_propagation_random_media} and \cite{milton_the_theory_of_composites}.

However, as far as we know, there are no known results about a nonlinear stochastic Maxwell equation. One reason might be that in the absence of Strichartz estimates for $ (e^{tM} )_{t\in\R}$, even local solvability is a tricky issue. Moreover, there is no embedding of the $ D(M)\hookrightarrow L^{p} $, that helps to control the nonlinearity.
Even the deterministic version of \eqref{stochastic_maxwell_intro} has not beed treated rigorously so far. In \cite{roach_stratis_yannacopoulus}, the authors profess to prove well-posedness, but their argument ignores some severe complications. Since they claim to have better deterministic results than ours, we discuss their approach in section 6 in detail.

Now, we briefly sketch our strategy. At first, we show in section $ 4, $ that \eqref{stochastic_maxwell_intro} has a unique weak solution 
\begin{equation}\label{stochastic_maxwell_intro_weak_regularity}
 u\in L^{2}(\Omega;C(0,T;L^{2}(D)))^6\cap L^{q+2}(\Omega\times[0,T]\times D)^6.
\end{equation}
 This is done in two steps. First, we use a version of the Galerkin method from R\"ocker and Pr\'evot (see \cite{prevot_rockner_a_concise_course_on_stochastic_pde}) to solve \eqref{stochastic_maxwell_intro} in the special case $ G\equiv 0 $ and make use of the monotone structure of our nonlinearity. As this is approach is well-known, we just discuss the different steps and concentrate on how to deal with the additional term $ Mu $, despite the fact, that $ u\notin D(M). $ Afterwards, we inflict the retarded material law with Banach's fixed point theorem. 

The proof of the existence and uniqueness of a strong solution, that additionally satisfies
\[ Mu\in L^{2}(\Omega;L^{\infty}(0,T;L^{2}(D)))^6+ L^{\frac{q+2}{q+1}}(\Omega\times[0,T]\times D)^6 \]
is more tricky. Again, we start with $ G\equiv 0 $ and we add a nontrivial $ G $ at the very end. In a deterministic setting, one would try to estimate $ \|u'(t)\|_{L^{2}(D)^6} $ and then use \eqref{stochastic_maxwell_intro_weak_regularity} to control $ Mu. $ However, solutions of stochastic differential equations are not differentiable in time. The first idea was to derive an estimate for $ \|Mu(t)-|u(t)|^qu(t)+J(t)\|_{L^{2}(D)^6}^2$ with Gronwall's Lemma, but we failed since the It\^o formula for this quantity contains the term $$ \|D_{vv} (|v|^qv)(u(t))\big(B(u(t)),B(u(t)\big)\|_{L^{2}(D)^6}^2, $$ we could not estimate properly. Hence, we choose the noise $ \sum_{j=1}^{N}\big(b_j(t)+iB_ju(t)\big)d\beta_j(t) $ and use the rescaling transform $$ y(t)=u(t)e^{-i\sum_{j=1}^{N}B_j\beta_j(t)} $$
to get rid of the multiplicative noise in the same way as Barbu and R\"ockner in \cite{barbu_rockner_random_scaled_porous_media} and \cite{barbu_rockner_stochastic_variation_inequalities} (see also \cite{BarbuH1},\cite{BarbuL2}) and \cite{barbu_rockner_zhang_stochastic_nonlinear_schrodinger}). The difference in our approach is that the authors have natural a priori estimates before transforming the equation and just transform to solve the transformed equation with deterministic techniques. Moreover, they just use multiplicative noise. We use the transform to get better a priori estimates and consider an equation that also has additive noise. The arising equation has the form
\begin{equation*}
(\operatorname{TSEE})\begin{cases}
dy(t)&=[My(t)-|y(t)|^{q}y(t)+A(t)y(t)+\widetilde{J}(t)]\operatorname{dt}+\sum_{i=1}^{N}\widetilde{b}_i(t)\ d\beta_i(t),\\
u(0)&=u_0,
\end{cases}
\end{equation*}
with a nonautonomous operator $ A(t) $ having random coefficients. We truncate $ (\operatorname{TSEE}) $ with a refined Faedo-Galerkin approach, i.e. we solve
\begin{equation*}
\begin{cases}
dy_n(t)&=[P_nMy_n(t)-P_n|y_n(t)|^{q}y_n(t)+P_nA(t)y_n(t)+P_n\widetilde{J}(t)]\operatorname{dt}+\sum_{i=1}^{N}S_{n-1}\widetilde{b}_i(t)\ d\beta_i(t),\\
y_n(0)&=S_{n-1} u_0.
\end{cases}
\end{equation*}
Here, $ P_n=\ind_{[0,2^n]}(-\Delta_H) $ and $ S_n=\psi(-2^{-n}\Delta_H) $ for some $ \psi\in C_c^{\infty}(D) $ with $ \operatorname{supp} \psi\subset [0,2] $ and $ \psi= 1 $ on $ [0,1] $ are spectral multipliers with respect to the Hodge-Laplacian $ \Delta_H $ on $ L^{p} $, that is the component-wise Laplacian with domain
\begin{align*}
 \Big\{(u_1,u_2)\in L^{p}(D)^6: &\curl u_1,\curl u_2, \curl\curl u_1,\curl\curl u_2\in L^{p}(D)^3, \Div u_1\in W_0^{1,p}(D),\\
 &\Div u_2\in W^{1,p}(D),u_1\times\nu =0,u_2\cdot\nu=0,\curl u_2\times \nu=0\text{ on }\partial D \Big \}.
\end{align*}
Now $ P_n$ and $S_n $ reduce the problem to an ordinary stochastic differential equation that can be solved easily. Moreover, we show that $ P_n,S_n $ are self-adjoint on $ L^{2}(D)^6 $ and commute with both $ \Delta_H $ and $ M $. Further, we have $ \|S_nu\|_{L^{p}(D)^6}\leq C  \|u\|_{L^{p}(D)^6}$ with a constant $ C>0 $ depending on $ p $, but not on $ u $ and $ n. $ Note that such an estimate is not available for $ P_n $ in a general situation. This remarkable uniform $ L^{p} $-boundedness is a consequence of \cite{kunstmann_uhl_spectral_multiplier_theorems}, together with generalized Gaussian bounds for the Hodge-Laplacian (see \cite{mitrea_monniaux_on_the_analyticity_stokes}, \cite{kunstmann_uhl_spectral_multipliers_for_some_elliptic_systems}). The deep connection between $ \Delta_H $ and $ M $ is a consequence of the formula
\[ -\Delta_H=\curl\curl-\operatorname{grad}\Div, \]
which implies $ \Delta_H=M^2 $ in the range of the Helmholtz projection $ P_H $ and $ M^2=0 $ in the range of $ (I-P_H). $ This interplay will be examined in detail in section $ 3. $ The idea to use spectral multiplier results in such a way was firstly used by Brzezniak, F. Hornung and Weis in \cite{brzensiak_hornung_weis_martingale_solutions_for_nls}. Afterwards, we estimate $$ \|P_nMy_n(t)-P_n|y_n(t)|^{q}y_n(t)+P_nA(t)y_n(t)+P_n\widetilde{J}(t)\|^2_{L^{2}(D)^6} $$ using It\^o's
formula, the monotone structure of the equation and the properties of $ P_n,S_n $. This yields the desired estimate for $ My_n $ uniformly in $ n. $ Finally, we pass to the limit again using the monotonicity of the nonlinearity and undo the transformation. 

In section $ 6 $, we explain how the result changes if one strengthens some of the assumptions and we discuss interesting special cases, such as the deterministic version of \eqref{stochastic_maxwell_intro}, $ b\equiv 0 $ or a constant $ B. $ Moreover, we sketch a program to extend this approach to non-constant coefficients $ \kappa\neq I $.

    \section{Preliminaries}
The purpose of this section is to provide a short overview over the basic tools used in this paper. For most of the proofs and further details, we give references to the literature.

Throughout this paper, let $ (\Omega,\mathfrak{F},\F=(\ft)_{t\geq 0},\PP) $ be a filtered probability space that satisfies the usual conditions, i.e. $ \mathcal{F}_0 $ contains all $ \PP $-null sets and the filtration is right-continuous. Moreover, given normed spaces $ X $ and $ Y $, $ B(X,Y) $ denotes the set of all linear and bounded operators from $ X $ to $ Y $. Further, we write $ C(a,b;X) $ for the space of uniformly continuous functions on $ [a,b] $ with values in $ X $ equipped with its usual norm and $ L^{2}(H_1,H_2) $ for the space of Hilbert-Schmidt operators between the Hilbert spaces $ H_1 $ and $ H_2. $ Throughout this article, $ D\subset \R^{3}  $ will either be a bounded $ C^1 $-domain or $ D=\R^{3}. $ If we evaluate a function on $ \partial D, $ this always corresponds to the first case and has no meaning in the second case.
\subsection{\texorpdfstring{The operators $ \curl $ and $ \Div $}{The operators curl and div}}
First, we give a short introduction into vector calculus. To motivate the definition of functions with vanishing tangential component or normal component on the boundary, we make the following calculation with smooth functions $ f,g:\overline{D}\to\R^3. $ Using vector calculus and the Divergence theorem, we obtain
\begin{align*}
\int_{\partial D}f\cdot(g \times\nu)\operatorname{d\sigma}&=\int_{\partial D}\nu\cdot(f \times g)\operatorname{d\sigma}=\int_{D}\Div(f\times g)(x)\operatorname{dx}\\
&=\int_{D}\curl g(x)\cdot f(x)\operatorname{dx}-\int_{D}g(x)\cdot\curl f(x)\operatorname{dx}.
\end{align*}
 Similarly, we get
\begin{align*}
\int_{\partial D}y(z \cdot\nu)\operatorname{d\sigma}&=\int_{D}\Div(y\cdot z)(x)\operatorname{dx}\\
&=\int_{D}\nabla y(x)\cdot z(x)\operatorname{dx}+\int_{D}y(x)\Div z(x)\operatorname{dx}.
\end{align*}
for smooth $ y:\overline{D}\to\R $ and $ z:\overline{D}\to\R^{3}. $ 
%Hence, it is natural to say that $ g\times \nu $ vanishes on $ \partial D, $ if the right hand side vanishes for every $ f\in C^{\infty}(\overline{D})^3. $
%Hence, we say that $ z\cdot \nu $ vanishes on $ \partial D, $ if the right hand side vanishes for every $ y\in C^{\infty}(\overline{D}). $ 
Hence, we can define vanishing tangential and normal components on the boundary in a natural way.
\begin{Definition}\label{stochastic_maxwell_trace}
	Let $ D\subset\R^{3} $ be bounded $ C^1 $-domain with boundary $ \partial D $ and $ p\in [1,\infty). $
	\begin{itemize}
\item[a)] Let $ g\in L^{p}(D)^3 $ with $ \curl g\in L^{p}(D)^3. $ We say $ g\times \nu=0 $ on $ \partial D, $ if
\[ \int_{D}\curl\phi(x)\cdot g(x)\operatorname{dx}=\int_{D}\phi(x)\cdot\curl g(x)\operatorname{dx} \]
for every $ \phi\in C^{\infty}(\overline{D})^3 $.
\item[b)] Let $ z\in L^{p}(D)^3 $ with $ \Div z\in L^p(D). $ We say $ z\cdot \nu=0 $ on $ \partial D, $ if
\[ \int_{D}\nabla \phi(x)\cdot z(x)\operatorname{dx}=-\int_{D}y(x)\Div z(x)\operatorname{dx} \]
for every $ \phi\in C^{\infty}(\overline{D}) $.
	\end{itemize}
\end{Definition}
Next, we introduce the subspaces of $ L^{2}(D)^3 $ associated with $ \curl $ and $ \Div. $
\begin{Definition}We set
\begin{itemize}
\item [a)] $ H(\curl)(D):=\big\{u\in L^{2}(D)^3:\ \curl u\in L^{2}(D)^3 \big\}. $ 
%$ \|u\|_{H(\curl)(D)}:=\|\curl u\|_{L^{2}(D)^3}+\|u\|_{L^{2}(D)^3}. $%
\item [b)] $ H(\curl,0)(D):=\big\{u\in H(\curl)(D):\ u\times\nu=0\ \operatorname{on}\ \partial D \big\}. $
% $ \|u\|_{H(\curl,0)(D)}:=\|u\|_{H(\curl)(D)} $
\item[c)] $ H(\Div)(D):=\big\{u\in L^{2}(D)^3:\ \Div u\in L^{2}(D) \big\}. $ 
%$ \|u\|_{H(\Div)(D)}:=\|\Div u\|_{L^{2}(D)}+\|u\|_{L^{2}(D)}. $
\item [d)] $ H(\Div,0)(D):=\big\{u\in H(\Div)(D):\ u\cdot\nu=0\ \operatorname{on}\ \partial D \big\}. $ 
%$ \|u\|_{H(\Div,0)(D)}:=\|u\|_{H(\Div)(D)}. $
\end{itemize}
\end{Definition}
We define the Maxwell operator $ M $ with perfect conductor boundary condition by
\[ M\binom{u_1}{u_2}=\binom{\curl u_2}{-\curl u_1} \]
on the domain $ D(M)=H(\curl,0)(D)\times H(\curl)(D). $
\begin{Proposition}\label{stochastic_maxwell_maxwell_skew}
The Maxwell operator $ M $ is skew-adjoint on $ L^{2}(D)^6 $, i.e. we have
\[ \int_{D}My(x)\cdot z(x)\operatorname{dx}=-\int_{D}y(x)\cdot Mz(x)\operatorname{dx} \]
for every $ y,z\in D(M) $ and $ D(M)=D(M^{*}). $
\end{Proposition}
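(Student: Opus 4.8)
The plan is to prove the two assertions separately: first the skew-symmetry identity on $D(M)$, and then the harder claim that the domain of the adjoint is no larger than $D(M)$.

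For the skew-symmetry I would write $y=(y_1,y_2)$ and $z=(z_1,z_2)$ with $y_1,z_1\in H(\curl,0)(D)$ and $y_2,z_2\in H(\curl)(D)$. Expanding $\int_D My\cdot z\,dx$ and $-\int_D y\cdot Mz\,dx$, the claim reduces to the two curl-integration-by-parts identities $\int_D\curl y_2\cdot z_1=\int_D y_2\cdot\curl z_1$ and $\int_D\curl y_1\cdot z_2=\int_D y_1\cdot\curl z_2$. Each of these is exactly the weak formulation of a vanishing tangential trace from Definition \ref{stochastic_maxwell_trace}a, applied to $z_1$ respectively $y_1$, except that the second argument ($y_2$ resp. $z_2$) need only lie in $H(\curl)(D)$ rather than being smooth. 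So the one genuine ingredient I would supply is a density argument: the functional $h\mapsto\int_D\curl h\cdot g-\int_D h\cdot\curl g$ is continuous on $H(\curl)(D)$ in the graph norm and vanishes on $C^\infty(\overline{D})^3$ by the definition, hence it vanishes on all of $H(\curl)(D)$ once I invoke density of $C^\infty(\overline{D})^3$ in $H(\curl)(D)$ (standard for $C^1$-domains; for $D=\R^3$ this is density of Schwartz fields and there is no boundary condition at all). With this, skew-symmetry and the inclusion $M\subseteq -M^{*}$ follow at once.

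For $D(M)=D(M^{*})$ it then suffices to show $D(M^{*})\subseteq D(M)$. I would take $z=(z_1,z_2)\in D(M^{*})$ with $M^{*}z=w=(w_1,w_2)$, so that $\int_D\curl y_2\cdot z_1-\int_D\curl y_1\cdot z_2=\int_D y_1\cdot w_1+\int_D y_2\cdot w_2$ for every $y\in D(M)$. First testing against compactly supported fields pins down the distributional curls: choosing $y_1=0$ and $y_2\in C_c^\infty(D)^3$ gives $\curl z_1=w_2\in L^2(D)^3$, while choosing $y_2=0$ and $y_1\in C_c^\infty(D)^3\subseteq H(\curl,0)(D)$ gives $\curl z_2=-w_1\in L^2(D)^3$. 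In particular $z_1,z_2\in H(\curl)(D)$.

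It remains to recover the boundary condition on the first component. Substituting $w_2=\curl z_1$ and $w_1=-\curl z_2$ back into the defining identity and using that $y_1\in H(\curl,0)(D)$ makes $\int_D\curl y_1\cdot z_2=\int_D y_1\cdot\curl z_2$ (the integration by parts just established, now with $z_2\in H(\curl)(D)$), the first-component contribution cancels and I am left with $\int_D\curl y_2\cdot z_1=\int_D y_2\cdot\curl z_1$ for all $y_2\in H(\curl)(D)$, in particular for all smooth $y_2$; by Definition \ref{stochastic_maxwell_trace}a this is precisely $z_1\times\nu=0$, so $z_1\in H(\curl,0)(D)$ and hence $z\in D(M)$ with $M^{*}z=(-\curl z_2,\curl z_1)=-Mz$. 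The step I would treat most carefully, and which I regard as the main obstacle, is exactly this last extraction of the boundary condition: it is essential that no condition is forced on $z_2$, since the test fields $y_1$ pairing with it are constrained to $H(\curl,0)(D)$ and therefore produce no boundary term. This asymmetry is what makes the unbalanced domain $H(\curl,0)(D)\times H(\curl)(D)$ self-adjoint and is where the perfect-conductor boundary condition is encoded.
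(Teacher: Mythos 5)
Your proof is correct. Note, however, that the paper does not actually prove this proposition at all: it simply cites Section~3 of the reference on splitting methods for Maxwell's equations, where the skew-adjointness of $M$ on $H(\curl,0)(D)\times H(\curl)(D)$ is established. So you are supplying a self-contained argument where the paper offers only a pointer to the literature. Your argument is the standard one and it is sound: the skew-symmetry reduces to the two curl integration-by-parts identities, which you correctly upgrade from smooth test fields to all of $H(\curl)(D)$ via density of $C^{\infty}(\overline{D})^3$ in the graph norm (this density is indeed classical for bounded Lipschitz, hence $C^1$, domains, and trivial on $\R^3$); and the adjoint-domain computation correctly first identifies the distributional curls by testing with $C_c^{\infty}$ fields and then extracts the tangential trace condition on $z_1$ alone, exploiting exactly the asymmetry you point out — the test fields pairing with $z_2$ are constrained to $H(\curl,0)(D)$ and so force no boundary condition on $z_2$. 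The only ingredient you import without proof is the density statement, which is the same kind of external input the paper's citation would supply; everything else is complete. One small bookkeeping check worth making explicit: $M$ is densely defined (since $C_c^{\infty}(D)^6\subset D(M)$), so $M^{*}$ exists, and your two halves together give $M^{*}=-M$ with equal domains, which is precisely the claim.
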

\begin{proof}
This result is well-known. See e.g. \cite{hochbruck_schnaubelt_jahnke_convergence_splitting_maxwell}, section $ 3. $ 
\end{proof}
The next technical Lemma will be needed later on.  We state it for functions in the sum of $ L^{p} $-spaces for technical reasons. This will only be necessary, when $ D=\R^3. $ 
\begin{Lemma}\label{maxwell_stochastic_lemma_distributional_maxwell_operator}
Let $ D $ be a bounded $ C^1 $- domain or $ D=\R^3 $, $ y\in L^{2}(D)^6 $ and $ p\in [1,\infty) $. If there exists $ z\in L^{2}(D)^6+L^{p}(D)^6, $ such that 
\begin{equation}\label{maxwell_stochastic_lemma_distributional_maxwell_operator_eq}
\int_{D}y(x)\cdot M\phi(x)\operatorname{dx}=-\int_{D}z(x)\cdot\phi(x)\operatorname{dx}
\end{equation}
for every $ \phi\in C^{\infty}(\overline{D})^6\cap L^{2}(D)^6\cap L^{\frac{p}{p-1}}(D)^6 $ with $ M\phi\in L^{2}(D)^6 $ and $ \phi_1\times \nu=0 $ on $ \partial D, $ we have $ My=z $ in the sense of distributions and $ y_1\times\nu=0 $ on $ \partial D. $
\end{Lemma}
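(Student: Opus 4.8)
The plan is to split the claim into its two assertions and to extract each one by testing \eqref{maxwell_stochastic_lemma_distributional_maxwell_operator_eq} against a well-chosen subclass of the admissible $ \phi $.

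First I would obtain $ My=z $ in the sense of distributions. Every $ \psi\in C_c^{\infty}(D)^6 $ belongs to the admissible test class of the hypothesis: it is smooth up to the boundary, its compact support makes all integrability requirements as well as $ \psi_1\times\nu=0 $ trivially true, and $ M\psi\in L^2(D)^6 $. Hence \eqref{maxwell_stochastic_lemma_distributional_maxwell_operator_eq} holds for all such $ \psi $. Since the formal transpose of the first-order operator $ M $ is $ -M $ (equivalently, this is the skew-adjointness of Proposition \ref{stochastic_maxwell_maxwell_skew} read on compactly supported functions, where no boundary terms appear), the restricted identity is precisely the definition of $ My=z $ in $ \mathcal{D}'(D)^6 $. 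Reading off the two components gives $ \curl y_2=z_1 $ and $ \curl y_1=-z_2 $ distributionally, and because $ z\in L^2(D)^6+L^p(D)^6 $ these curls are represented by functions in $ L^2(D)^3+L^p(D)^3 $.

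For the boundary assertion (which is vacuous when $ D=\R^3 $, so I would take $ D $ bounded here) the key observation is that the constraint $ \phi_1\times\nu=0 $ only restricts the \emph{first} component, so I can leave the second one free. Concretely, for arbitrary $ \psi\in C^{\infty}(\overline{D})^3 $ I would set $ \phi:=(0,\psi) $. Then $ \phi_1\times\nu=0 $ holds trivially, and $ M\phi=(\curl\psi,0)\in L^2(D)^6 $, the required integrability being automatic on the bounded domain, so $ \phi $ is admissible. Substituting into \eqref{maxwell_stochastic_lemma_distributional_maxwell_operator_eq} yields
\[ \int_{D}y_1\cdot\curl\psi\operatorname{dx}=-\int_{D}z_2\cdot\psi\operatorname{dx}=\int_{D}\curl y_1\cdot\psi\operatorname{dx}, \]
where the last step uses $ \curl y_1=-z_2 $ from the first part. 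Comparing with Definition \ref{stochastic_maxwell_trace} a), read in the generalized $ L^2+L^p $ sense since $ y_1\in L^2(D)^3 $ and $ \curl y_1\in L^2(D)^3+L^p(D)^3 $, this is exactly the statement $ y_1\times\nu=0 $ on $ \partial D $.

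The computation is short, and the only genuine care lies in the admissibility bookkeeping within the $ L^2+L^p $ framework. For a bounded domain this is painless; for $ D=\R^3 $ one must instead check that the test functions carry enough decay for every pairing to converge, which is precisely why the hypothesis restricts to $ \phi\in L^2(D)^6\cap L^{\frac{p}{p-1}}(D)^6 $ with $ M\phi\in L^2(D)^6 $, and it is the one point where the two geometric settings are treated separately. I expect no conceptual obstacle beyond recognising the decoupling choice $ \phi=(0,\psi) $ that isolates the tangential trace of $ y_1 $.
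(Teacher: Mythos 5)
Your proposal is correct and follows essentially the same route as the paper: test with compactly supported functions to identify $My=z$ distributionally, then test with $\phi=(0,\psi)$ for $\psi\in C^{\infty}(\overline{D})^3$ and use $\curl y_1=-z_2$ to recover exactly the identity in Definition \ref{stochastic_maxwell_trace} a). The only cosmetic difference is that the paper first splits into the two component identities before specializing, while you specialize first; the substance is identical.
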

\begin{proof}
By inserting $ \phi=(\phi_1,0) $ and $ \phi=(0,\phi_2) $ into \eqref{maxwell_stochastic_lemma_distributional_maxwell_operator_eq}, we derive
\begin{align*}
\int_{D}y_2(x)\cdot \curl\phi_1(x)\operatorname{dx}&=\int_{D}z_1(x)\cdot\phi_1(x)\operatorname{dx}\\
\int_{D}y_1(x)\cdot \curl\phi_2(x)\operatorname{dx}&=-\int_{D}z_2(x)\cdot\phi_2(x)\operatorname{dx}
\end{align*} 
for any smooth $ \phi_1 $ with $ \phi_1\times \nu=0 $ on $ \partial D $ and for any smooth $ \phi_2. $ Inserting $ \phi_1,\phi_2\in C_c^{\infty}(D)^3 $ yields $ \curl y_2=z_1 $ and $ \curl y_1=-z_2 $ in the sense of distributions, i.e. $ My=z $ in the sense of distributions. If $ D\neq\R^{d} $, we have to show the claimed boundary condition. The second identity implies 
\begin{align*}
\int_{D}y_1(x)\cdot \curl\psi(x)\operatorname{dx}+\int_{D}\curl y_1(x)\cdot\psi(x)\operatorname{dx}=0
\end{align*}
for every $ \psi\in C^{\infty}(\overline{D})^3$ and hence, $ y_1\times\nu=0 $ on $ \partial D $ in the sense of Definition \ref{stochastic_maxwell_trace}.
\end{proof}
\subsection{\texorpdfstring{The power nonlinearity $ |u|^qu $}{The power nonlinearity}}
In this subsection, we mention the basic properties of nonlinearity $ u\mapsto F(u)=|u|^qu $ as a mapping from $ L^{q+2}(D)^6 $ to $ L^{\frac{q+2}{q+1}}(D)^6 $ with $ q>0 $. We start with its monotonicity.
\begin{Lemma}\label{stochastic_maxwell_properties_nonlinearity}
	$ F $ satisfies the estimate
	\begin{equation}\label{stochastic_maxwell_properties_nonlinearity_monotonicity}
	\int_{D} \re\langle F(v)(x)-F(u)(x),u(x)-v(x)\rangle_{\C^6}\operatorname{dx}\leq -C\|u-v\|^{q+2}_{L^{q+2}(D)^6}
	\end{equation}
	for some $ C>0 $ and for all $ u,v\in L^{q+2}(D)^6. $
\end{Lemma}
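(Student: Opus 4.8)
The plan is to reduce the vector-valued statement to a pointwise inequality in $\C^6$ and then integrate. The key observation is that the claimed monotonicity estimate is really a statement about the scalar map $F(u)=|u|^qu$ acting on each point $x\in D$, so I would first prove the pointwise version: for all $a,b\in\C^6$,
\begin{equation*}
\re\langle |b|^qb-|a|^qa,\,a-b\rangle_{\C^6}\leq -C\,|a-b|^{q+2}.
\end{equation*}
Equivalently, after renaming, I want $\re\langle |a|^qa-|b|^qb,\,a-b\rangle\geq C|a-b|^{q+2}$, which is the standard strong monotonicity of the vector $p$-Laplacian-type nonlinearity with $p=q+2$. Once this pointwise bound holds, integrating over $D$ and recognizing $\int_D |u(x)-v(x)|^{q+2}\operatorname{dx}=\|u-v\|_{L^{q+2}(D)^6}^{q+2}$ gives the result immediately.

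For the pointwise estimate, the cleanest route is to use the fact that $a\mapsto |a|^qa$ is the gradient (in the real sense) of the convex function $\Phi(a)=\tfrac{1}{q+2}|a|^{q+2}$ on $\R^{12}\cong\C^6$, and to invoke the classical inequality for such gradients. Concretely I would cite or reproduce the well-known elementary bound
\begin{equation*}
\re\langle |a|^qa-|b|^qb,\,a-b\rangle\geq c_q\,|a-b|^{q+2},
\end{equation*}
valid for $q\geq 0$ with $c_q$ depending only on $q$. A direct proof parametrizes along the segment $a_t=b+t(a-b)$ and writes the left-hand side as $\int_0^1 \frac{d}{dt}\big(|a_t|^q a_t\big)\cdot(a-b)\,dt$, noting that the Jacobian of $w\mapsto|w|^qw$ is positive semidefinite with smallest eigenvalue comparable to $|w|^q$; integrating and using a lower bound for $\int_0^1 |a_t|^q\,dt$ in terms of $|a-b|^q$ yields the claim. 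Alternatively, one can dispatch the two cases $|a-b|\le \tfrac12\max(|a|,|b|)$ and $|a-b|>\tfrac12\max(|a|,|b|)$ separately by elementary manipulations.

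The main obstacle is the pointwise inequality itself, specifically obtaining the lower bound with the correct exponent $q+2$ rather than a weaker one. The subtlety is that the Jacobian of $w\mapsto|w|^qw$ degenerates near $w=0$, so one cannot simply bound $|a_t|^q$ below by a constant; the gain comes precisely from the convexity of $\Phi$ and from carefully estimating $\int_0^1 |b+t(a-b)|^q\,dt\gtrsim |a-b|^q$, which is where the homogeneity of degree $q+2$ emerges. I expect the real-part structure to cause no trouble beyond bookkeeping, since $\re\langle\cdot,\cdot\rangle_{\C^6}$ is just the real inner product on $\R^{12}$ under which $\Phi$ is convex. With the pointwise bound in hand, the remaining integration step is routine, so essentially all the work is concentrated in this scalar monotonicity estimate.
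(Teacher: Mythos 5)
Your proposal is correct and follows essentially the same route as the paper: the paper's proof simply cites Lemma 4.4 of DiBenedetto's \emph{Degenerate Parabolic Equations}, which is precisely the pointwise inequality $\re\langle |a|^qa-|b|^qb,\,a-b\rangle\geq c_q|a-b|^{q+2}$ you identify as the crux, followed by the same integration over $D$. Your sketch of how to prove that pointwise bound (segment parametrization, lower bound on the Jacobian eigenvalues by $|a_t|^q$, and the estimate $\int_0^1|b+t(a-b)|^q\,dt\gtrsim|a-b|^q$) is a valid reconstruction of the cited result.
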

%\begin{proof}
%	Clearly, $ \|F(u)\|_{L^{\tfrac{q+2}{q+1}}(D)^6}=\|u\|_{L^{q+2}(D)^6} $ and therefore $ F $ has the claimed mapping properties. It remains to show \eqref{stochastic_maxwell_properties_nonlinearity_monotonicity}. By Lemma $ 4.4 $ in \cite{dibeneddeto_degenerate_parabolic_equations}, we have
%	\[ \langle |y_2|^qy_2-|y_1|^qy_1,y_1-y_2\rangle_{\R^{6}}\geq C|y_1-y_2|^{q+2} \]
%	for some $ C>0 $ and all $ y_1,y_2\in\R^{6}. $ In particular, we get
%	\begin{align*}
%	\langle |z_2|^q&z_2-|z_1|^qz_1,z_1-z_2\rangle_{\C^{6}}\\
%	&=\langle |z_2|^q\re z_2-|z_1|^q \re z_1,\re z_1-´re z_2\rangle_{\R^{6}}+\langle |z_2|^q\im z_2-|z_1|^q\im z_1,\im z_1-\im z_2\rangle_{\R^{6}}\\
%	&\geq C|\re z_1-\re z_2|^{q+2}+C|\im z_1-\im z_2|^{q+2}\\
%	&\geq 2^{-\tfrac{q}{2}}C\big(|\re z_1-\re z_2|^2+ |\im z_1-\im z_2|^2\big)^{\tfrac{q+2}{2}}=C|z_1-z_2|^{q+2}
%	\end{align*}
%	Here the last estimate follows from $ (a+b)^{\tfrac{q+2}{2}}\leq 2^{\tfrac{q}{2}}\big(a^{\tfrac{q+2}{2}}+b^{\tfrac{q+2}{2}}\big)  $ for $ a,b>0. $ Plugging in $ u(x) $ and $ v(x) $ and integrating over $ D $ then yields the claimed result.
%\end{proof}
\begin{proof}
Clearly, $ \|F(u)\|_{L^{\tfrac{q+2}{q+1}}(D)^6}=\|u\|_{L^{q+2}(D)^6} $ and therefore $ F $ has the claimed mapping properties. The estimate \eqref{stochastic_maxwell_properties_nonlinearity_monotonicity} is a direct consequence of Lemma $ 4.4 $ in \cite{dibeneddeto_degenerate_parabolic_equations}.
\end{proof}
Since we often use It\^o's formula, we need to know the differentiability properties of $ F. $
\begin{Lemma}\label{stochastic_maxwell_nonlinearity_properties}
	The nonlinearity $ F:L^{q+2}(D)^6\to L^{\frac{q+2}{q+1}}(D)^6 $, $ u\mapsto |u|^qu $ is real Fr\'echet continuously differentiable with $ \re\langle F'(u)v,v\rangle_{L^2(D)^6}\geq 0 $ and
	\[ |F'(u)v(x)|\lesssim |u(x)|^{q}|v(x)| \]
		for all $ u,v\in L^{q+2}(D)^6 $ and $ x\in D $. In particular, it is locally Lipschitz continuous, i.e.
		\[ \|F(u)-F(v)\|_{L^{\frac{q+2}{q+1}}(D)^6}\lesssim\big(\|u\|_{L^{q+2}(D)^6}^q+\|q+2\|_{L^{2}(D)^6}^q\big)\|u-v\|_{L^{q+2}(D)^6}. \]
	Moreover, if $ q\in (1,\infty) $, it is twice real continuously differentiable  with 
	\begin{align*}
	&F''(u)(v,v)(x)\lesssim |u(x)|^{q-1}|v(x)|^2
	\end{align*}
	for all $ u,v\in L^{q+2}(D)^6. $ 
\end{Lemma}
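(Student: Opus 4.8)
The plan is to verify each differentiability claim by direct computation on the pointwise map $z\mapsto|z|^qz$ on $\C^6$ and then lift these pointwise estimates to the Fr\'echet setting via the Nemytskii-operator formalism. First I would write $F(u)(x)=|u(x)|^qu(x)$ and compute the real derivative of $g(z)=|z|^qz$ as a map $\R^{12}\to\R^{12}$ (identifying $\C^6\cong\R^{12}$). Writing $|z|^q=(z\cdot\bar z)^{q/2}$ and differentiating, one gets $Dg(z)w = |z|^q w + \tfrac{q}{2}|z|^{q-2}\big(\langle z,w\rangle+\langle w,z\rangle\big)z$, which immediately yields the bound $|F'(u)v(x)|\lesssim|u(x)|^q|v(x)|$. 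The monotonicity $\re\langle F'(u)v,v\rangle_{L^2}\ge 0$ I would extract by pairing $Dg(z)w$ with $w$: the first term contributes $|z|^q|w|^2\ge0$, and the cross term contributes $\tfrac{q}{2}|z|^{q-2}\big(\re\langle z,w\rangle\cdot 2\re\langle w,z\rangle\big)$, whose real part is $q|z|^{q-2}(\re\langle z,w\rangle)^2\ge0$; integrating over $D$ gives the claim.

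Next I would address the mapping property and Fr\'echet differentiability of $F$ as an operator $L^{q+2}\to L^{(q+2)/(q+1)}$. For this I would invoke the standard theory of Nemytskii operators: the pointwise bound $|g(z)|=|z|^{q+1}$ gives $\|F(u)\|_{L^{(q+2)/(q+1)}}^{(q+2)/(q+1)}=\int_D|u|^{q+2}=\|u\|_{L^{q+2}}^{q+2}$, so $F$ maps into the right space, and the candidate derivative is the multiplication-type operator $v\mapsto Dg(u(\cdot))v(\cdot)$. To prove this operator is the genuine Fr\'echet derivative, I would estimate the remainder $F(u+v)-F(u)-F'(u)v$ using the mean-value form of $g$ together with H\"older's inequality with exponents tuned so that a factor $|u|^q$ or $|v|^q$ in $L^{(q+2)/q}$ multiplies a factor $|v|$ in $L^{q+2}$ to land in $L^{(q+2)/(q+1)}$. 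The local Lipschitz estimate follows the same way from the pointwise bound $|g(z_1)-g(z_2)|\lesssim(|z_1|^q+|z_2|^q)|z_1-z_2|$.

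For the second-order statement when $q>1$, I would differentiate $Dg(z)$ once more. The term $|z|^q w$ differentiates to produce a factor $|z|^{q-2}z$, and the cross term similarly produces factors with total homogeneity $q-1$ in $z$; schematically every resulting monomial carries $|z|^{q-1}$ times two copies of the direction, giving the pointwise bound $|F''(u)(v,v)(x)|\lesssim|u(x)|^{q-1}|v(x)|^2$. Continuity of $z\mapsto D^2g(z)$ away from issues at $z=0$ is where the restriction $q>1$ enters: the exponent $q-1$ must be nonnegative (and the map continuous at the origin) for $D^2g$ to extend continuously, which fails for $q\le 1$.

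The main obstacle is the regularity of $g$ at the origin $z=0$, where $|z|^q$ is only finitely smooth. The first derivative is well-defined and continuous precisely because $q>0$ makes $|z|^q\to0$; the second derivative requires $q>1$ so that $|z|^{q-1}\to 0$ as $z\to 0$. I would handle this by treating $z=0$ separately, showing the difference quotients converge to $0$ there, and verifying continuity of the derivative operators in the operator norm by dominated convergence applied to the pointwise continuous integrands, controlled by the homogeneity bounds above. Translating pointwise differentiability into Fr\'echet differentiability of the Nemytskii operator — ensuring the remainder estimate is uniform enough after integration — is the technically delicate part, but it is routine given the H\"older bookkeeping sketched above.
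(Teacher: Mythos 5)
Your proposal follows essentially the same route as the paper: compute the pointwise real derivative $F'(u)v=q|u|^{q-2}\re\langle u,v\rangle_{\C^6}u+|u|^qv$, read off the growth bound and the sign of $\re\langle F'(u)v,v\rangle_{L^2}$ from it, and upgrade to Fr\'echet differentiability of the Nemytskii operator via H\"older, the mean value theorem and a dominated-convergence argument, with the same observation that $q>1$ is what makes the second derivative (homogeneity $q-1$) continuous at the origin. The argument is correct and no essential step is missing.
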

\begin{proof}
	It is well-known, that $ F:L^{q+2}(D)^6\to L^{\frac{q+2}{q+1}}(D)^6 $ is real differentiable with
	\[ F'(u)v=q|u|^{q-2}\re\langle u,v\rangle_{\C^{6}}u+|u|^qv \]
	for every $ u,v\in L^{q+2}(D)^6 $. See e.g. given \cite{schnaubelt_isem_dispersive_equations}, Corollary $ 9.3 $
	
	 Consequently, we also have
	\begin{align*}
	\re\big\langle F'(u)v,v\big\rangle_{L^{2}(D)^6}=\int_{D}q|u(x)|^{q-2}\big(\re\langle u(x),v(x)\rangle_{\C^{6}}\big)^2+|u(x)|^{q}|v(x)|^2\operatorname{dx}\geq 0.
	\end{align*}
	 Moreover, we estimate
	\begin{align*}
	F'(u)v(x)\leq C|u(x)|^{q}|v(x)|
	\end{align*}
	for some $ C>0 $. For the second derivative, we start with formal calculation for $ F'' $ and get
	\begin{align*}
	F''(u)(v,w)=&q|u|^{q-2}\Big((q-2)|u|^{-2}\re\langle u,w\rangle_{L^{2}(D)^6}\re\langle u,v\rangle_{L^{2}(D)^6}u+\re\langle w,v\rangle_{L^{2}(D)^6}u\\
	&+ \re\langle u,w\rangle_{L^{2}(D)^6}v+\re\langle u,v\rangle_{L^{2}(D)^6}w \Big)
	\end{align*}
	
	For sake of readability, we do not rigorously show that $ F:L^{q+2}(D)^6 \to L^{q+2}(D)^6 $ is twice Fr\'echet differentiable with this derivative. However, to give an impression how to show this, we check that 
	last term in $ F'(u)v $, namely $ u\mapsto [v\mapsto |u|^qv]:L^{q+2}(D)^6\to B(L^{q+2}(D),L^{\frac{q+2}{q+1}}(D)^6) $, is Fr\'echet differentiable with derivative $ G(u)(v,w)=q|u|^{q-2}\re\langle u,w\rangle_{\C^{6}}v $. Let $ u,v,w\in L^{q+2}(D)^6 $ with $ v,w\neq 0. $  Then, H\"olders inequality together with the mean value theorem yield
	\begin{align*}
	\big\||u|^qv-&|u+w|^qv-G(u)(v,w)\big\|_{L^{\frac{q+2}{q+1}}(D)^6}\\
	&\leq \big\||u|^q-|u+w|^q-q|u|^{q-2}\re\langle u,w\rangle_{\C^{6}}\big\|_{L^{\frac{q+2}{q}}(D)^6}\|v\|_{L^{q+2}(D)^6}\\
	&\lesssim \big\|\int_{0}^{1}\re\langle |u+\theta w|^{q-2}(u+\theta w)-|u|^{q-2}u,w\rangle_{\C^{6}} \operatorname{d\theta}\big\|_{L^{\frac{q+2}{q}}(D)^6}\|v\|_{L^{q+2}(D)^6}\\
	&\leq\int_{0}^{1}\big\||u+\theta w|^{q-2}(u+\theta w)-|u|^{q-2}u\big\|_{L^{\frac{q+2}{q-1}}(D)^6}\operatorname{d\theta}\|w\|_{L^{q+2}(D)^6}\|v\|_{L^{q+2}(D)^6}
	\end{align*}
	Hence, we showed
	\begin{align}\label{stochastic_maxwell_frechet_diff_nonlinear}
	\|w\|_{L^{q+2}(D)^6}^{-1}\big\|v\mapsto |u|^qv-&|u+w|^qv-G(u)(v,w)\big\|_{B(L^{q+2}(D)^6,L^{\frac{q+2}{q+1}}(D)^6)}\notag\\
	&\lesssim \int_{0}^{1}\big\||u+\theta w|^{q-2}(u+\theta w)-|u|^{q-2}u\big\|_{L^{\frac{q+2}{q-1}}(D)^6}\operatorname{d\theta}
	\end{align}
	for all  $ u,w\in L^{q+2}(D)^6 $ with $ w\neq 0. $ 
	
	It remains to prove that this quantity tends to $ 0 $ as $ w\to 0 $ in $ L^{q+2}(D)^6. $
	Let $ (w_n)_n $ be a sequence in $ L^{q+2}(D)^6 $ with $ w_n\to 0 $ as $ n\to\infty $ and let $ (w_{n_k})_{k} $ be an arbitrary subsequence. Hence there exists another subsequence, still denoted with $ (w_{n_k})_{k} $, such that $ w_{n_k}\to 0 $ almost everywhere for $ k\to\infty $ and such that $ |w_{n_k}|\leq g $ for some $ g\in L^{q+2}(D)^6 $. We also have $$ |u+\theta w_{n_k}|^{q-2}(u+\theta w_{n_k})-|u|^{q-2}u\to 0 $$ almost everywhere as $ k\to\infty. $ Together with the bound 
	\[ \Big||u+\theta w_{n_k}|^{q-2}(u+\theta w_{n_k})-|u|^{q-2}u\Big|\leq |u|^{q-1}+|w_{n_k}|^{q-1}\leq |u|^{q-1}+g^{q-1}, \]
	for $ \theta\in [0,1] $ and the fact that $ u\in L^{q+2}(D)^6, $ we get
	\[ \int_{0}^{1}\big\||u+\theta w_{n_k}|^{q-2}(u+\theta w_{n_k})-|u|^{q-2}u\big\|_{L^{\frac{q+2}{q-1}}(D)^6}\operatorname{d\theta}\to 0 \]
	as $ k\to\infty $. All in all, this shows that the left hand side of \ref{stochastic_maxwell_frechet_diff_nonlinear} tends to $ 0 $ as $ w\to 0 $ and we established the Fr\'echet differentiability of $ u\mapsto [v\mapsto |u|^qv] $ with derivative $ G(u). $ The claimed estimate for $ F''(u)(v,v)(x) $ is immediate. This closes the proof.
\end{proof}
\section{\texorpdfstring{The Hodge-Laplacian on a bounded $ C^1 $-domain and its spectral multipliers}{The Hodge-Laplacian on a bounded domain}}
In this section, we introduce the Hodge-Laplace operator on a bounded $ C^1 $-domain $ D $, and we define the spectral projections needed in the sequel. We consider the bilinear form
\begin{align*}
a(u,v)=\int_{D}(\curl u)(x)(\curl v)(x)\operatorname{dx}+\int_{D}(\Div u)(x)(\Div v)(x)\operatorname{dx}
\end{align*}
with $ D(a) $ either given by $ V^{(1)}:= H(\operatorname{curl},0)(D)\cap H(\operatorname{div})(D) $ or by $ V^{(2)}:= H(\operatorname{curl})(D)\cap H(\operatorname{div},0)(D) $ equipped with the norm
\begin{align*}
\|u\|^2_{V^{(i)}}:=\|\curl u\|_{L^{2}(D)}^2+\|\Div u\|_{L^{2}(D)}^2+\|u\|_{L^{3}(D)^2}
\end{align*}
for $ i=1,2. $ In both cases, $ a $ is symmetric and bounded. Moreover, $ a $ is coercive in sense
\[ a(u,u)=\|u\|_{V^{(i)}}^2-\|u\|_{L^{2}(D)}^2 \]
for all $ u\in V^{(i)} $, $ i=1,2 $. Setting 
\begin{align*}
D(A^{(1)})&=\{u\in V^{(1)}:\curl\curl u\in L^{2}(D)^3,\ \Div u\in W^{1,2}_0(D) \}\\
D(A^{(2)})&=\{u\in V^{(2)}:\curl\curl u\in L^{2}(D)^3,\ \curl u\times \nu=0\text{ on }\partial D,\ \Div u\in W^{1,2}(D) \}
\end{align*}
it turns out, that $ a $ with $ D(a)=V^{(1)} $ is associated with the operator $ A^{(1)}=-\Delta $ on the domain $ D(A^{(1)}), $ whereas $ a $ with $ D(a)=V^{(2)} $ is associated with the operator $ A^{(2)}=-\Delta $ on the domain $ D(A^{(2)}). $ To see this, use partial integration for $ \curl $ and $ \Div $ and the formula $ \curl\curl=\operatorname{grad}\Div-\Delta $. By the coercivity of the corresponding forms, the operators $ I+A^{(i)} $, $ i=1,2 $, are strictly positive. Moreover, the symmetrie implies that they are self-adjoint on $ L^{2}(D)^3 $ and since the embeddings $ V^{(i)}\hookrightarrow L^{2}(D)^3 $ are compact (see \cite{amrouche_bernadi_dauge_girault_vector_potential_in_three-dimensional_domains}, Theorem $ 2.8 $), the embeddings $ D(A^{(i)})\hookrightarrow L^{2}(D)^3 $ are also compact. Consequently, there exists two orthonormal basis of eigenvectors $ (h_j^{(i)})_{j\in\N} $ to the positive eigenvalues $ (\lambda_j^{(i)})_{j\in\N} $ of $ I+A^{(i)} $ with $ \lambda_j^{(i)}\to\infty $ for $ j\to\infty. $

The next Proposition shows, that these operators satisfy generalized Gaussian estimates. We add an additional sectral shift, since some of the theorems we apply require strictly positive operators. 
\begin{Proposition}\label{stochastic_maxwell_hodge_laplacian_gaussian_bounds}
	Both $ I+A^{(1)} $ and $ I+A^{(2)} $ satisfy generalized Gaussian $ (2,q) $ estimates for every $ q\in [2,\infty)$, i.e. for every $ q\in [2,\infty) $ there exists $ C,b>0 $, such that 
	\begin{align*}
\|\ind_{B(x,t^\frac{1}{2})}e^{-t(I+A^{(i)})}\ind_{B(y,t^\frac{1}{2})}\|_{B(L^{2}(D)^3,L^{q}(D)^3}\leq Ct^{-\frac{3}{2}(\frac{1}{2}-\frac{1}{q})}e^{-\frac{b|x-y|^2}{t}}
	\end{align*}
	for all $ t>0 $ and all $ x,y\in D $.
\end{Proposition}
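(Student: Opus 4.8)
The plan is to reduce the generalized Gaussian $(2,q)$ estimate to the combination of two ingredients that are by now standard for self-adjoint operators arising from a symmetric form: an $L^2$ off-diagonal (Davies--Gaffney) bound $\|\ind_E e^{-t(I+A^{(i)})}\ind_F\|_{B(L^2(D)^3)}\le C e^{-b\,d(E,F)^2/t}$, and an on-diagonal $L^2$-to-$L^q$ smoothing bound $\|\ind_{B(x,\sqrt t)}e^{-t(I+A^{(i)})}\|_{B(L^2(D)^3,L^q(D)^3)}\lesssim t^{-\frac32(\frac12-\frac1q)}$. By the equivalence theorem behind the generalized Gaussian calculus, in the form used in \cite{kunstmann_uhl_spectral_multiplier_theorems}, these two bounds together yield the claimed estimate. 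Since $A^{(1)}$ and $A^{(2)}$ share the same form $a$ and differ only through the boundary conditions encoded in $V^{(1)}$ versus $V^{(2)}$, I would treat both cases in parallel and write $A=A^{(i)}$, $V=V^{(i)}$ throughout.

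First I would establish Davies--Gaffney. The key structural fact is the Gaffney identity $a(u,u)=\|\nabla u\|_{L^2(D)^{3\times3}}^2$ for $u\in V$, which follows from $\curl\curl=\operatorname{grad}\Div-\Delta$ together with the boundary conditions built into $V^{(1)}$ and $V^{(2)}$ (this is exactly the integration by parts already used to identify $A^{(i)}=-\Delta$); in particular it gives $V\hookrightarrow H^1(D)^3$. Thus $a$ is a genuine gradient form, and the standard exponential-perturbation argument applies: conjugating the semigroup by $e^{\rho\phi}$ for a bounded $1$-Lipschitz $\phi$ and controlling the perturbed form $a(e^{\rho\phi}u,e^{-\rho\phi}u)$ by the gradient term produces the off-diagonal decay with the correct Gaussian rate.

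For the smoothing bound I would use the Sobolev embedding $H^1(D)^3\hookrightarrow L^6(D)^3$ in dimension three, which together with the Gaffney identity gives the form-level inequality $\|u\|_{L^6(D)^3}\lesssim a(u,u)^{1/2}+\|u\|_{L^2(D)^3}$ and hence an $L^2$-to-$L^6$ bound for $e^{-t(I+A)}$ of Nash type with the scaling $t^{-\frac32(\frac12-\frac16)}$. To reach every finite $q$ I would exploit the analyticity and self-adjointness of the semigroup: $e^{-t(I+A)}$ maps $L^2(D)^3$ into $D((I+A)^k)$ with norm $\lesssim t^{-k}$, and the regularity of the Hodge--Laplacian on a $C^1$-domain from \cite{mitrea_monniaux_on_the_analyticity_stokes} embeds these domains into Sobolev spaces of high enough order to dominate any prescribed $L^q$ with $q<\infty$. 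Composing with the Davies--Gaffney bound via the semigroup law then localizes the smoothing estimate and delivers the stated inequality. For $D=\R^3$ the boundary conditions are vacuous and the whole statement reduces to the classical Gaussian bounds for the componentwise Laplacian.

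The main obstacle I expect is precisely the smoothing step on a merely $C^1$-domain: classical second-order elliptic boundary regularity (i.e.\ $D(A)\hookrightarrow H^2$, which would even give an $L^\infty$ bound) is not available, so one must rely on the sharper fractional regularity results for the Hodge system in \cite{mitrea_monniaux_on_the_analyticity_stokes} and \cite{kunstmann_uhl_spectral_multipliers_for_some_elliptic_systems}. This loss of a full derivative at the boundary is also what forces the restriction $q<\infty$ rather than $q=\infty$ in the statement; checking that the fractional Sobolev regularity one does obtain is sufficient to cover the entire range $q\in[2,\infty)$, and that the constants remain uniform in $t$, $x$ and $y$, is the delicate part.
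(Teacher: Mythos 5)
Your overall architecture (Davies--Gaffney off-diagonal bounds plus an on-diagonal $L^2\to L^q$ smoothing estimate) is a legitimate way to organize generalized Gaussian bounds, and the $L^2$ off-diagonal estimates do hold here --- though you do not need the Gaffney identity for them: conjugating the form $a$ itself by $e^{\rho\phi}$ only produces zeroth-order commutators $\rho\,\nabla\phi\times u$ and $\rho\,\nabla\phi\cdot u$, which already suffice. The genuine gap is in the smoothing step, at both places where you invoke Sobolev regularity. First, the identity $a(u,u)=\|\nabla u\|_{L^2}^2$ and the embedding $V^{(i)}\hookrightarrow H^1(D)^3$ are not available on a merely $C^1$ domain: the integration by parts behind $\|\curl u\|^2+\|\Div u\|^2=\|\nabla u\|^2$ produces boundary terms involving the curvature of $\partial D$, which is not controlled for $C^1$ boundaries, and for Lipschitz (and in general $C^1$) domains one only has $H(\curl,0)(D)\cap H(\Div)(D)\hookrightarrow H^{1/2}(D)^3$, which by Sobolev embedding gives $L^3$, not $L^6$. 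Second, and more seriously, the proposed bootstrap ``$e^{-t(I+A)}$ maps $L^2$ into $D((I+A)^k)$, and these domains embed into Sobolev spaces of high enough order'' fails: on a $C^1$ domain there is no $H^2$ regularity for $D(A^{(i)})$, and iterating the operator does not raise the attainable smoothness beyond what the boundary permits, so no power of the resolvent lands in $H^s$ with $s$ near $3/2$, which is what you would need to reach every $q<\infty$ in dimension three. You flag this as the delicate part, but the fractional regularity you appeal to does not close it.

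The paper circumvents exactly this obstruction by bootstrapping in the integrability scale rather than the smoothness scale: following \cite{kunstmann_uhl_spectral_multipliers_for_some_elliptic_systems}, one iterates $L^p$ resolvent estimates for the Hodge--Laplacian (the argument on p.~239 there, built on \cite{mitrea_monniaux_on_the_analyticity_stokes}), and the admissible range of $q$ is governed by the critical exponent $q_D$ up to which the associated boundary value problems are $L^p$-solvable. That $q_D=\infty$ for a $C^1$ domain in $\R^3$ is the content of Mitrea's sharp Hodge decomposition theorems and is a deep, domain-specific input, not something recoverable from form-level Sobolev embeddings; on a general Lipschitz domain $q_D$ is finite and the full range $q\in[2,\infty)$ is not available, which shows that any proof must use the $C^1$ hypothesis in an essential way. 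Your plan as written never engages with this input, and the shift from $A^{(i)}$ to $I+A^{(i)}$ is the only step it handles correctly without it.
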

%Let $ p\in [2,\infty) $ and set $ p^{*}:=\tfrac{3p}{2}. $ In section $ 5 $ in $ \cite{mitrea_monniaux_on_the_analyticity_stokes} $, Mitrea and Monniaux proved a version of the following resolvent estimate. For each $ i=1,2 $ and $ \Theta\in (0,\pi) $ there exists $ c,C>0 $, such that
% \begin{align*}
%\|\ind_{B(x,|\lambda|^{-1/2})}\lambda R(\lambda,A^{(i)}_2)(\ind_{B(x,|\lambda|^{-1/2})}u)\|_{L^{p^{*}}(D)^3}\leq C|\lambda|^{-\tfrac{1}{2p}}e^{-c|\lambda|^{1/2}|x-y|}\|(\ind_{B(x,|\lambda|^{-1/2})}u)\|_{L^{p}(D)^3}
% \end{align*}
% for all $ x,y\in D $ and all $ \lambda\in\Sigma_{\theta} $ with $ \theta\in (0,\Theta). $ Note, that the precise dependence of this estimate on the constants is formulated in \cite{hofmann_mitrea_monniaux_riesz_transform_hodge_laplace}, Lemma $ 4.1. $
\begin{proof}
	In \cite{kunstmann_uhl_spectral_multipliers_for_some_elliptic_systems}, the authors argue on page $ 239, $ that both $ A^{(1)} $ and $ A^{(2)} $ satisfy generalized Gaussian $ (2,q) $-bounds for every $ q\in [2,q_{D}). $ Here, $ q_{D}\in [2,\infty) $ denotes the supremum over all indexes $ p $ for which the boundary value problems
	\begin{equation*}
	\begin{cases}
	\Delta u=f \text{ in }D,\\
	\curl u,\ \curl\curl u\in L^{p}(D)^3,\ \Div(u)\in W^{1,p}(D),\\
	u\cdot\nu=0,\ \curl(u)\times\nu =0 \text{ on }\partial D
	\end{cases}
	\end{equation*}
	and
	\begin{equation*}
	\begin{cases}
	\Delta u=f \text{ in }D,\\
	\curl u,\ \curl\curl u\in L^{p}(D)^3,\ \Div(u)\in W^{1,p}_0(D),\\
	u\times\nu=0 \text{ on }\partial D
	\end{cases}
	\end{equation*}
	have a unique solution. This argument heavily makes use of iterative resolvent estimate for the Hodge-Laplacian (see \cite{mitrea_monniaux_on_the_analyticity_stokes}, section $ 5 $ and $ 6 $). By \cite{mitrea_sharp_hodge_decomposition}, Theorem $ 1.2 $ and $ 1.3, $ we know that $ q_D=\infty $ since $ D $ is a $ C^1 $-domain in $ \R^{3}. $ Finally note that Gaussian estimates are preserved under positive spectral shifts.
\end{proof}
For more details about these operators, we refer to \cite{mitrea_monniaux_on_the_analyticity_stokes}, where they are discussed in a more general differential geometric setting.

We define spectral multipliers with the natural functional calculus for self-adjoint operators having a basis of eigenvectors. Let $ \Psi\in C_c^{\infty}(\R) $ with $ \operatorname{supp}(\Psi)\subset [\tfrac{1}{2},2] $ and $ \sum_{l\in\Z}\Psi(2^{-l}x)=1 $ for all $ x>0 $. The operators $ P_n:L^{2}(D)^6\to L^{2}(D)^6$ and $ S_n:L^{2}(D)^6\to L^{2}(D)^6 $ are defined by
\begin{align*}
P_n(u)=\binom{\ind_{[0,2^n]}(I+A^{(1)})(u_1)}{\ind_{[0,2^n]}(I+A^{(2)})(u_2)},\quad
S_n(u)=\binom{\sum_{l=-\infty}^{n}\Psi(2^{-l}(I+A^{(1)}))(u_1)}{\sum_{l=-\infty}^{n}\Psi(2^{-l}(I+A^{(2)}))(u_2)}
\end{align*}
for $ u=(u_1,u_2)\in L^{2}(D)^6 $ and $ n\in\N. $ Note, that the above sums are finite, since only finitely many eigenvalues of $ A^{(i)} $ are smaller than $ n. $ The next Proposition summarizes the most important properties of $ S_n $ and $ P_n $ as operators on $ L^{2}(D)^6 $.

\begin{Proposition}\label{stochastic_maxwell_spectral_multiplier_l2_properties}
	$ P_n $ and $ S_n $ satisfy
	\begin{itemize}
		\item[i)] $ P_n $ is a projection, i.e. $ P_n^2=P_n $.
		\item[ii)] The operators $ P_n,S_n  $ are self-adjoint with $ \|P_n\|_{B(L^{2}(D)^6)}=\|S_n\|_{B(L^{2}(D)^6)}=1 $ for every $ n\in\N $.
		\item [iii)] $ P_n$ and $S_m $ commute for every $ n,m\in\N. $
		\item [iv)] The range of $ P_n $ and $ S_n $ is finite dimensional. Moreover, we have $ R(P_n),R(S_n)\subset D(M) $ for every $ n\in\N. $
		\item[v)]We have $ R(S_{n-1})\subset R(P_n)\subset R(S_{n}) $, $ S_nP_n=P_n $ and $ P_nS_{n-1}=S_{n-1}. $
		\item[vi)] We have $ \lim_{n\to\infty}P_nx=\lim_{n\to\infty}S_nx=x $ for every $ x\in L^{2}(D)^6 $.
%		\item[vi)] $ P_n$ and $S_n $ commute with the Helmholtz projection $ P_H $ from Proposition \ref{maxwell_stochastic_helmholtz}.
	\end{itemize}
\end{Proposition}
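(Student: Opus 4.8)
The plan is to reduce every assertion to an elementary statement about the two scalar multiplier functions and then to invoke the self-adjoint functional calculus attached to the eigenbases $(h_j^{(i)})_{j\in\N}$. Writing $f_n:=\ind_{[0,2^n]}$ and $\phi_n(x):=\sum_{l=-\infty}^{n}\Psi(2^{-l}x)$, the spectral theorem gives, for $u=(u_1,u_2)$,
\[ P_n u=\Big(\sum_{j}f_n(\lambda_j^{(1)})\langle u_1,h_j^{(1)}\rangle h_j^{(1)},\ \sum_{j}f_n(\lambda_j^{(2)})\langle u_2,h_j^{(2)}\rangle h_j^{(2)}\Big), \]
and the same formula for $S_n$ with $f_n$ replaced by $\phi_n$. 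The first thing I would record is the behaviour of $\phi_n$: since the $\Psi(2^{-l}\cdot)$ form a partition of unity with $\operatorname{supp}\Psi\subset[\tfrac12,2]$, the $l$-th summand lives on $[2^{l-1},2^{l+1}]$, whence $\phi_n\equiv 1$ on $[0,2^n]$, $\phi_n\equiv 0$ on $[2^{n+1},\infty)$ and $0\le\phi_n\le 1$. With these two functions in hand, (i)--(iii) are immediate consequences of the basic calculus rules: $f_n^2=f_n$ forces $P_n^2=P_n$; both $f_n,\phi_n$ are real-valued and bounded, so $P_n,S_n$ are self-adjoint, and since both multipliers are bounded by $1$ and equal $1$ on a nonempty part of the spectrum, $\|P_n\|=\|S_n\|=1$; finally, any two bounded Borel functions of the same self-adjoint operator commute, and the block-diagonal structure of $P_n,S_m$ leaves this unaffected.

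For (iv), finite-dimensionality holds because $\lambda_j^{(i)}\to\infty$, so only finitely many eigenvalues lie in $[0,2^n]$ (for $P_n$) respectively in the compact support $[0,2^{n+1}]$ of $\phi_n$ (for $S_n$); each range is thus the span of finitely many eigenvectors. The inclusion $R(P_n),R(S_n)\subset D(M)$ is the part with genuine content: the first component of any range element is a finite combination of eigenvectors $h_j^{(1)}\in D(A^{(1)})\subset V^{(1)}=H(\curl,0)(D)\cap H(\Div)(D)\subset H(\curl,0)(D)$, while the second is built from $h_j^{(2)}\in D(A^{(2)})\subset V^{(2)}=H(\curl)(D)\cap H(\Div,0)(D)\subset H(\curl)(D)$. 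Hence every range element lies in $H(\curl,0)(D)\times H(\curl)(D)=D(M)$.

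Property (v) I would again read off at the scalar level from the support description of $\phi_n$: one has $f_n\,\phi_{n-1}=\phi_{n-1}$ (because $\phi_{n-1}$ is supported in $[0,2^n]$, where $f_n\equiv 1$) and $\phi_n\,f_n=f_n$ (because $\phi_n\equiv 1$ on $[0,2^n]\supset\operatorname{supp}f_n$). Multiplicativity then yields $P_nS_{n-1}=S_{n-1}$ and $S_nP_n=P_n$, and each identity says that the larger operator acts as the identity on the range of the smaller one, giving $R(S_{n-1})\subset R(P_n)\subset R(S_n)$. For (vi) I would use that $f_n(\lambda)\to 1$ and $\phi_n(\lambda)\to 1$ pointwise, with both sequences bounded by $1$; Parseval's identity for the eigenbasis (equivalently dominated convergence in the spectral integral) then gives $P_nx\to x$ and $S_nx\to x$ in $L^{2}(D)^6$.

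The main obstacle I anticipate is not a hard estimate but being precise about the cutoff $\phi_n$: one must verify carefully that the partial sum of the dyadic partition of unity is genuinely $1$ on $[0,2^n]$ and vanishes on $[2^{n+1},\infty)$, since the whole of (v) -- and hence the later interplay between the $P_n$- and $S_n$-truncations -- rests on the two scalar identities above. The only other subtlety is the claim $R\subset D(M)$ in (iv), which is exactly where the choice of the form domains $V^{(1)},V^{(2)}$ encoding the boundary conditions feeds into the domain of the Maxwell operator.
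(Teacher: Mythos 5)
Your proof is correct and follows essentially the same route as the paper: the paper likewise reduces everything to the observation that $\sum_{l=-\infty}^{n}\Psi(2^{-l}\cdot)$ equals $1$ on $(0,2^n]$ and vanishes beyond $2^{n+1}$, and then invokes the functional calculus for self-adjoint operators with discrete spectrum; your write-up simply spells out the scalar identities and the eigenbasis computations that the paper leaves implicit. The only point worth noting is that your statement ``$\phi_n\equiv 1$ on $[0,2^n]$'' fails at the single point $0$, which is harmless here since the spectra of $I+A^{(i)}$ lie in $[1,\infty)$.
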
 
\begin{proof}
We have $ \sum_{l=-\infty}^{n}\Psi(2^{-l}\cdot)=\ind_{(0,2^n)}+\psi(2^{-n}\cdot)\ind_{[2^n,2^{n+1})} $, by choice of $ \psi. $ Hence, all these properties follow from the functional calculus for self-adjoint operators in Hilbert spaces.
\end{proof}
Moreover, the operators $ S_n $ have the following property, that will be crucial in what follows.
\begin{Lemma}\label{stochastic_maxwell_properties_galerkin_uniformly_in_n}
	For every $ p\in (1,\infty) $, the operators
	$ S_n$ are operators from $ L^{p}(D)^6 $ to $ L^{p}(D)^6 $ with a bound depending on $ p $, but not on $ n\in\N $. Moreover, we have $ S_nf\to f $ in $ L^{p}(D)^6 $ as $ n\to\infty $ for all $ f\in L^{p}(D)^6.$
\end{Lemma}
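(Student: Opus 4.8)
The plan is to identify each diagonal block of $ S_n $ with a single \emph{dilated} spectral multiplier of the Hodge-Laplacian and then to invoke the spectral multiplier theorem of Kunstmann and Uhl \cite{kunstmann_uhl_spectral_multiplier_theorems}, whose hypotheses are exactly the generalized Gaussian bounds from Proposition \ref{stochastic_maxwell_hodge_laplacian_gaussian_bounds}. Since $ S_n $ acts diagonally, it suffices to treat the scalar multipliers $ m_n(I+A^{(i)}) $ with $ m_n(\lambda)=\sum_{l=-\infty}^{n}\Psi(2^{-l}\lambda) $, $ i=1,2 $. The key observation is that $ m_n $ is a single dilation of one fixed function: substituting $ \mu=2^{-n}\lambda $ gives $ m_n(2^n\mu)=\sum_{k\geq 0}\Psi(2^k\mu)=:\Phi(\mu) $, so that $ m_n=\Phi(2^{-n}\cdot) $. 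Using $ \operatorname{supp}\Psi\subset[\tfrac12,2] $ and $ \sum_{l\in\Z}\Psi(2^{-l}\mu)=1 $ one checks that $ \Phi\in C_c^{\infty}(\R) $ with $ \operatorname{supp}\Phi\subset[0,2] $ and $ \Phi\equiv 1 $ on $ [0,1] $; in particular $ \Phi $ does not depend on $ n $.

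Next I would apply the spectral multiplier theorem. By Proposition \ref{stochastic_maxwell_hodge_laplacian_gaussian_bounds}, $ I+A^{(i)} $ is self-adjoint, strictly positive and satisfies generalized Gaussian $ (2,q) $-bounds for every $ q\in[2,\infty) $. The theorem of Kunstmann-Uhl then yields a bounded H\"ormander $ \mathcal{H}^s_2 $-calculus on $ L^{p}(D)^3 $ for a fixed order $ s $ and all $ p\in(q',q) $; letting $ q\to\infty $ covers every $ p\in(1,\infty) $. The crucial point is that the controlling norm $ \|m\|_{\mathcal{H}^s_2}=\sup_{t>0}\|\phi\,m(t\cdot)\|_{H^s_2} $ (with $ \phi $ a fixed nonzero bump) is scale-invariant, whence $ \|m_n\|_{\mathcal{H}^s_2}=\|\Phi(2^{-n}\cdot)\|_{\mathcal{H}^s_2}=\|\Phi\|_{\mathcal{H}^s_2}<\infty $ is independent of $ n $. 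This gives $ \|S_n\|_{B(L^{p}(D)^6)}\leq C_p\|\Phi\|_{\mathcal{H}^s_2} $ with $ C_p $ independent of $ n $, which is the asserted uniform bound.

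For the convergence I would combine this uniform bound with convergence on a dense subspace. Since the Gaussian bounds turn $ e^{-t(I+A^{(i)})} $ into a bounded analytic, strongly continuous semigroup on $ L^{p}(D)^3 $, the union of the ranges of $ e^{-t(I+A^{(1)})}\oplus e^{-t(I+A^{(2)})} $ is dense in $ L^{p}(D)^6 $, so it suffices to treat $ f=e^{-(I+A)}g $ block-wise. For such $ f $ one has $ f-S_nf=\kappa_n(I+A)g $ with $ \kappa_n(\lambda)=(1-\Phi(2^{-n}\lambda))e^{-\lambda} $. Because $ \Phi\equiv 1 $ on $ [0,1] $, the multiplier $ \kappa_n $ is supported in $ \{\lambda\geq 2^n\} $, where the factor $ e^{-\lambda} $ forces $ \|\kappa_n\|_{\mathcal{H}^s_2}\to 0 $ as $ n\to\infty $; the multiplier theorem then gives $ \|f-S_nf\|_{L^{p}(D)^6}\lesssim\|\kappa_n\|_{\mathcal{H}^s_2}\|g\|_{L^{p}(D)^6}\to 0 $. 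A standard $ \varepsilon/3 $-argument using $ \sup_n\|S_n\|_{B(L^{p}(D)^6)}<\infty $ extends the convergence from the dense subspace to all of $ L^{p}(D)^6 $ (on $ L^2(D)^6 $ this is already contained in Proposition \ref{stochastic_maxwell_spectral_multiplier_l2_properties} vi)).

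The main obstacle is the uniform-in-$ n $ boundedness, and everything hinges on the two structural facts above: that the partial sums are \emph{exact} dilations $ \Phi(2^{-n}\cdot) $ of one fixed $ C_c^{\infty} $ symbol, and that the Gaussian bounds of Proposition \ref{stochastic_maxwell_hodge_laplacian_gaussian_bounds} feed into a spectral multiplier theorem whose controlling norm is dilation invariant. Note that, as remarked after the statement, no such bound can hold for the sharp cut-offs $ P_n $, whose symbols are genuine dilations of the discontinuous $ \ind_{[0,1]} $ and hence violate the H\"ormander condition; the smoothness of $ \Phi $ is exactly what is exploited here.
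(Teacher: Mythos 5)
Your argument for the uniform $L^p$-bound is the same as the paper's: both feed the generalized Gaussian $(2,q)$-estimates of Proposition \ref{stochastic_maxwell_hodge_laplacian_gaussian_bounds} into the Kunstmann--Uhl spectral multiplier theorem; you merely make explicit the point the paper leaves implicit, namely that $\sum_{l\leq n}\Psi(2^{-l}\cdot)=\Phi(2^{-n}\cdot)$ is an exact dilation of a fixed symbol and that the controlling H\"ormander norm is dilation invariant, so the bound cannot depend on $n$. For the convergence $S_nf\to f$, however, you take a genuinely different route: the paper quotes the Paley--Littlewood decomposition theorem of Kriegler and Weis, which requires checking the $0$-sectoriality of $-\Delta_H$ and the boundedness of a Mikhlin $\mathcal{M}^\alpha$-calculus on $L^p(D)^6$, whereas you run a self-contained $\varepsilon/3$-argument: uniform boundedness plus convergence on the dense set of semigroup ranges, using that $\kappa_n(\lambda)=(1-\Phi(2^{-n}\lambda))e^{-t\lambda}$ is supported in $\{\lambda\geq 2^n\}$ so its H\"ormander norm decays. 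Your version is more elementary and avoids the extra hypotheses of the Kriegler--Weis theorem, at the cost of being tied to this particular family of multipliers; the paper's citation buys the full Paley--Littlewood machinery in one stroke. Two harmless imprecisions you should fix: $\Phi(0)=\sum_{k\geq 0}\Psi(0)=0$, so $\Phi\equiv 1$ only on $(0,1]$ and $\Phi$ is discontinuous at $0$ --- irrelevant here because $\sigma(I+A^{(i)})\subset[1,\infty)$ and the H\"ormander norm only sees $\lambda>0$, but it should be said; and density of the single range $R(e^{-(I+A)})$ is not immediate in $L^p$ --- use instead $\bigcup_{t>0}R\big(e^{-t(I+A)}\big)$, which is dense by strong continuity, and run the identical estimate with $\kappa_{n,t}(\lambda)=(1-\Phi(2^{-n}\lambda))e^{-t\lambda}$ for each fixed $t>0$.
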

\begin{proof}
	The first statement follows from the spectral multiplier theorem $ 5.4 $ in \cite{kunstmann_uhl_spectral_multiplier_theorems} as a consequence of the generalized Gaussian bounds for $ A^{(1)} $ and $ A^{(2)}. $ One could also argue with the more general Theorem $ 7.1 $ in \cite{kriegler_weis_spectral_multiplier_via_functional_calculus}. 
%	Moreover, we have $ S_nf\to f $ in $ L^{2}(D)^6 $ for $ n\to\infty $ by the convergence property of the functional calculus for self-adjoint operators. Let $ f\in L^{q}(D)^6 $ and $ g\in L^{2}(D)^6 $ with $ \|f-g\|_{L^{q}(D)^6}<\varepsilon $ for an arbitrary $ \varepsilon>0. $ Then,
The claimed convergence property is then a special case from \cite{kriegler_weis_paley_littlewood_decomposition_for_sectorial_operators_and_interpolation_spaces}, Theorem $ 4.1. $ To apply this Theorem the $ 0 $-sectoriality of $ -\Delta_H $ and the boundedness of a Mikhlin functional calculus $ \mathcal{M}^\alpha $ in $ L^{p}(D)^6 $ for some $ \alpha>0 $ are needed. The first is checked in \cite{mitrea_monniaux_on_the_analyticity_stokes}, Theorem $ 6.1, $ whereas the second holds true with $ \alpha>4 $ by the generalized Gaussian bounds (see \cite{kriegler_weis_paley_littlewood_decomposition_for_sectorial_operators_and_interpolation_spaces}, Lemma $ 6.1 $, $ (3) $).
%	\begin{align*}
%\|S_nf-f\|_{L^{q}(D)^6}&\leq \|S_nf-S_ng\|_{L^{q}(D)^6}+\|S_ng-g\|_{L^{q}(D)^6}+\|g-f\|_{L^{q}(D)^6}\\
%&\leq (1+sup_{n\in\N}\|S_n\|_{B(L^{q}(D)^6)})\varepsilon+\|S_ng-g\|_{L^{2}(D)^6}\\
%&\xrightarrow{n\to\infty}(1+\sup_{n\in\N}\|S_n\|_{B(L^{q}(D)^6)})\varepsilon
%	\end{align*}
%	yields the claimed result.
\end{proof}
Next, we introduce two different Helmholtz projections on $ L^{2}(D)^3 $. The proof for the following statement is well-known and can be found amongst others in \cite{hettlich_kirsch_the_mathematical_theory_of_time_harmonic_maxwells_equations}, section $ 4.1.3. $
\begin{Proposition}\label{maxwell_stochastic_helmholtz}
Let $ D\subset\R^{3} $ be a bounded Lipschitz domain. Given $ u\in L^{2}(D)^3, $ the following decompositions hold true. 
\begin{itemize}
\item[$ (1) $] There exists a unique $ p\in W^{1,2}_0(D) $ and $ \widetilde{u}\in H(\operatorname{div})(D) $ with $ \Div\widetilde{u}=0 $ such that $ u=\widetilde{u}+\nabla p. $ The corresponding operator $ P^{(1)}_H:L^{2}(D)^3\to L^{2}(D)^3, u\mapsto\widetilde{u} $ is an orthogonal projection.
\item[$ (2) $] There exists a unique $ p\in W^{1,2}(D) $ with $ \int_{D}p(x)\operatorname{dx}=0 $ and $ \widetilde{u}\in H(\Div,0)(D) $ with $ \Div\widetilde{u}=0 $ such that $ u=\widetilde{u}+\nabla p. $ The corresponding operator $ P^{(2)}_H:L^{2}(D)^3\to L^{2}(D)^3, u\mapsto\widetilde{u} $ is an orthogonal projection.
\end{itemize}
In particular, $ P_H(u_1,u_2):=\big(P_H^{(1)}u_1,P_H^{(2)}u_2\big) $ for $ u_1,u_2\in L^{2}(D)^3 $ defines an orthogonal projection on $ L^{2}(D)^6 $.
\end{Proposition}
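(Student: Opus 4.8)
The plan is to obtain both decompositions by the standard variational (Lax--Milgram) argument: construct the scalar potential $p$ as the solution of a weak elliptic problem and then \emph{define} $\widetilde{u}$ as the remainder $u-\nabla p$, reading off its properties from the equation satisfied by $p$. For part $(1)$ I would solve the Dirichlet-type problem: find $p\in W^{1,2}_0(D)$ with $\int_D\nabla p\cdot\nabla\phi\operatorname{dx}=\int_D u\cdot\nabla\phi\operatorname{dx}$ for all $\phi\in W^{1,2}_0(D)$. The bilinear form $(p,\phi)\mapsto\int_D\nabla p\cdot\nabla\phi\operatorname{dx}$ is symmetric and bounded, and by the Poincar\'e inequality on $W^{1,2}_0(D)$ it is coercive, while the right-hand side is a bounded linear functional; the Riesz representation theorem (equivalently Lax--Milgram) then yields a unique such $p$. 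Setting $\widetilde{u}:=u-\nabla p$ and testing against $\phi\in C_c^\infty(D)$ shows $\Div\widetilde{u}=0$ distributionally, so $\widetilde{u}\in H(\Div)(D)$ with $\Div\widetilde{u}=0$.

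For part $(2)$ I would instead solve a Neumann-type problem on the mean-zero subspace $\dot W^{1,2}(D):=\{p\in W^{1,2}(D):\int_D p\operatorname{dx}=0\}$: find $p\in\dot W^{1,2}(D)$ with $\int_D\nabla p\cdot\nabla\phi\operatorname{dx}=\int_D u\cdot\nabla\phi\operatorname{dx}$ for all $\phi\in W^{1,2}(D)$. Coercivity now comes from the Poincar\'e--Wirtinger inequality on $\dot W^{1,2}(D)$ (valid since $D$ is connected), and Lax--Milgram again provides a unique $p$. With $\widetilde{u}:=u-\nabla p$, testing against $\phi\in C_c^\infty(D)$ gives $\Div\widetilde{u}=0$, while testing against an arbitrary $\phi\in W^{1,2}(D)$ and invoking the generalized Green formula for $H(\Div)$-fields forces $\widetilde{u}\cdot\nu=0$ on $\partial D$ in the sense of Definition \ref{stochastic_maxwell_trace}; that is, $\widetilde{u}\in H(\Div,0)(D)$.

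Orthogonality and uniqueness in both cases rest on the single identity $\langle\widetilde{u},\nabla p\rangle_{L^{2}(D)^3}=-\int_D(\Div\widetilde{u})\,p\operatorname{dx}+\int_{\partial D}(\widetilde{u}\cdot\nu)\,p\operatorname{d\sigma}$, whose right-hand side vanishes: in $(1)$ the boundary term disappears because $p\in W^{1,2}_0(D)$ has zero trace and the volume term because $\Div\widetilde{u}=0$, and in $(2)$ the boundary term vanishes because $\widetilde{u}\cdot\nu=0$. Hence the two summands are $L^2$-orthogonal, each decomposition is a genuine orthogonal direct sum ($L^{2}(D)^3=\{v:\Div v=0\}\oplus\nabla W^{1,2}_0(D)$ in the first case, and with the additional normal-trace condition in the second), and $P_H^{(i)}$ is the associated orthogonal projection; uniqueness of $p$ and $\widetilde{u}$ is immediate from the direct-sum structure. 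Finally, since $P_H$ acts diagonally as $P_H(u_1,u_2)=(P_H^{(1)}u_1,P_H^{(2)}u_2)$ on $L^{2}(D)^6=L^{2}(D)^3\times L^{2}(D)^3$, it inherits self-adjointness and idempotency from its two blocks and is therefore an orthogonal projection.

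The only genuine subtlety, and the step I expect to need the most care, is the boundary analysis in $(2)$: one must make sense of the normal trace $\widetilde{u}\cdot\nu$ for a field that is merely in $H(\Div)(D)$ over a Lipschitz domain, justify the Green formula used above in that setting, and check that the resulting condition coincides with Definition \ref{stochastic_maxwell_trace}. Everything else is routine Hilbert-space functional analysis once the relevant Poincar\'e-type inequalities are in hand.
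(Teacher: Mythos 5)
Your proof is correct. Note that the paper does not actually prove this proposition: it simply refers to \cite{hettlich_kirsch_the_mathematical_theory_of_time_harmonic_maxwells_equations}, Section $4.1.3$, so your Lax--Milgram construction is a genuine (and standard) replacement for that citation. Two small remarks. First, the one step you flag as the ``genuine subtlety'' --- making sense of the normal trace $\widetilde{u}\cdot\nu$ for a field that is merely in $H(\Div)(D)$ on a Lipschitz domain --- is actually a non-issue in this paper's framework: Definition \ref{stochastic_maxwell_trace}~b) \emph{defines} $\widetilde{u}\cdot\nu=0$ by exactly the weak identity $\int_D\nabla\phi\cdot\widetilde{u}\operatorname{dx}=-\int_D\phi\,\Div\widetilde{u}\operatorname{dx}$ for all $\phi\in C^{\infty}(\overline{D})$, and your variational equation (tested against $\phi\in W^{1,2}(D)\supset C^{\infty}(\overline{D})$) together with $\Div\widetilde{u}=0$ verifies this directly; no trace theorem or generalized Green formula is needed. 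For the same reason, orthogonality in both cases follows from the defining variational identity itself rather than from an integration by parts with boundary terms. Second, to get the full direct-sum (and hence uniqueness) statement you should observe that the orthogonality holds between the \emph{entire} subspaces --- every divergence-free field is orthogonal to $\nabla W^{1,2}_0(D)$ in case $(1)$, and every divergence-free field with vanishing weak normal trace is orthogonal to $\nabla W^{1,2}(D)$ in case $(2)$ --- which again is immediate from the weak definitions by density of $C_c^{\infty}(D)$ in $W^{1,2}_0(D)$, respectively of $C^{\infty}(\overline{D})$ in $W^{1,2}(D)$ on a Lipschitz domain; the Poincar\'e and Poincar\'e--Wirtinger inequalities then make the gradient subspaces closed, so the projections are well defined. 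With these points made explicit your argument is complete.
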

To simplify the notation in what follows, we combine $ A^{(1)} $ and $ A^{(2)} $ to a self-adjoint operator $ -\Delta_H (u_1,u_2):=(A^{(1)}u_1,A^{(2)}u_2) $ for $ (u_1,u_2)\in D(A^{(1)})\times D(A^{(2)}) $. 
The Helmholtz projection $ P_H $ is closely related to both $ M $ and $ \Delta_H $. In the following Lemma, we exploit the fact $ M^2=\Delta_H $ on $ D(M)\cap P_H(L^{2}(D)^6) $ to show some powerful identities.
\begin{Lemma}\label{stochastic_maxwell_commutation_of_M_and_S_n}
We have $ P_H\Delta_H=\Delta_HP_H $ on $ D(\Delta_H) $, $ MP_H=P_HM $ on $ D(M) $ and $ P_nM=MP_n $, $ S_nM=MS_n $ on $ D(M). $
\end{Lemma}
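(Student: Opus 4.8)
The plan is to treat the three commutation relations in the order stated, using the Helmholtz decomposition $L^{2}(D)^6=R(I-P_H)\oplus R(P_H)$ as the organizing principle: on $R(I-P_H)$ (the gradient fields) $M$ vanishes, while on $R(P_H)$ one has the identity $\Delta_H=M^2$. I would establish the first two identities directly from vector calculus and then reduce the commutation of $M$ with $P_n,S_n$ to a functional-calculus statement built on them. For $P_H\Delta_H=\Delta_HP_H$ I argue componentwise, i.e. $P_H^{(i)}A^{(i)}=A^{(i)}P_H^{(i)}$, via the form $a$. Since $A^{(i)}$ is the self-adjoint operator associated with the symmetric form $a$ on $V^{(i)}$ and $P_H^{(i)}$ is an orthogonal projection, it suffices by the standard criterion to check that $P_H^{(i)}$ leaves $V^{(i)}$ invariant and that $a(P_H^{(i)}u,v)=a(u,P_H^{(i)}v)$ for $u,v\in V^{(i)}$. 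Writing $u=\widetilde u+\nabla p$, the cross term $a(\widetilde u,\nabla p)=\int_D\curl\widetilde u\cdot\curl\nabla p+\int_D\Div\widetilde u\,\Div\nabla p$ vanishes because $\curl\nabla p=0$ and $\Div\widetilde u=0$, which gives the symmetry; invariance follows from $\curl P_H^{(i)}u=\curl u$, $\Div P_H^{(i)}u=0$, and the boundary observation that $\nabla p\times\nu=0$ for $p\in W_0^{1,2}(D)$ (resp. $\widetilde u\cdot\nu=0$ in the second case), so that $P_H^{(i)}u$ inherits the boundary condition defining $V^{(i)}$.

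For $MP_H=P_HM$ on $D(M)$ I compute the explicit action: for $u=(u_1,u_2)\in D(M)$ one has $P_HMu=(P_H^{(1)}\curl u_2,-P_H^{(2)}\curl u_1)$ and $MP_Hu=(\curl P_H^{(2)}u_2,-\curl P_H^{(1)}u_1)$, so it suffices to show $P_H^{(1)}\curl u_2=\curl u_2$ and $P_H^{(2)}\curl u_1=\curl u_1$ together with $\curl P_H^{(i)}u=\curl u$ (the latter is immediate from $\curl\nabla p=0$). The field $\curl u_2$ is divergence-free and satisfies $\int_D\curl u_2\cdot\nabla p=0$ for $p\in W_0^{1,2}(D)$ by the Divergence identity from the preliminaries, hence lies in $R(P_H^{(1)})$; likewise $\curl u_1$ is divergence-free, and applying the boundary formula $\int_{\partial D}f\cdot(g\times\nu)\operatorname{d\sigma}=\int_D\curl g\cdot f-\int_D g\cdot\curl f$ with $g=u_1$, $f=\nabla\phi$ and $u_1\times\nu=0$ shows $\curl u_1\cdot\nu=0$, so $\curl u_1\in R(P_H^{(2)})$. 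One also checks $P_Hu\in D(M)$, so both sides are well defined.

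Finally, the commutation of $M$ with $P_n$ and $S_n$ I reduce to functional calculus. Since $P_n$ and $S_n$ are bounded Borel functions of $-\Delta_H$, and $P_H$ commutes with $-\Delta_H$ by the first identity, both $P_n$ and $S_n$ commute with $P_H$ and hence leave $R(I-P_H)$ and $R(P_H)$ invariant. On $R(I-P_H)$ we have $M=0$, so $MP_n$ and $P_nM$ both vanish there and trivially agree. On $R(P_H)$ the operator $N:=iM$ is self-adjoint (as $M$ is skew-adjoint and $R(P_H)$ reduces $M$) and satisfies $N^2=-M^2=-\Delta_H$; consequently the restrictions of $P_n$ and $S_n$ to $R(P_H)$ are bounded Borel functions of $N$, so they commute with $N$ and a fortiori with $M=-iN$. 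Combining the two subspaces yields $P_nM=MP_n$ and $S_nM=MS_n$ on $D(M)$.

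The main obstacle is the identification $-\Delta_H=N^2$ on $R(P_H)$ at the level of self-adjoint operators, that is, verifying that the operator obtained by squaring $N=iM$ through its own spectral calculus coincides, domain and boundary conditions included, with the restriction of $-\Delta_H$ to $R(P_H)$. Morally this is just $\Delta_H=M^2=\curl\curl-\operatorname{grad}\Div$ on divergence-free fields, but matching the two functional calculi requires checking that $\curl$ carries the domain and boundary conditions of $A^{(1)}$ into those of $A^{(2)}$ and back, so that $N$ genuinely restricts to a self-adjoint operator on $R(P_H)$ whose square is the form-defined $-\Delta_H$; this boundary-condition bookkeeping is the technical heart of the argument.
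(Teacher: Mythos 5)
Your argument is correct and follows essentially the same route as the paper: establish $P_H\Delta_H=\Delta_HP_H$, show $MP_H=P_HM=M$ (via $\Div\curl=0$, the trace identity giving $\curl u_1\cdot\nu=0$ from $u_1\times\nu=0$, and $\curl\nabla=0$), and then transfer the functional calculus of $-\Delta_H$ to $M$ through the identity $-\Delta_H=-M^2$ on $R(P_H)$ together with $M(I-P_H)=0$. The only differences are cosmetic: you verify the first commutation directly from the form $a$ where the paper cites Mitrea--Monniaux and Kunstmann--Uhl, and you explicitly flag the one step both arguments leave unproved in detail, namely the identification of $-\Delta_H|_{R(P_H)}$ with $\bigl(iM|_{R(P_H)}\bigr)^2$ as self-adjoint operators, domains and boundary conditions included.
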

\begin{proof}
	The first claim can be found in \cite{mitrea_monniaux_on_the_analyticity_stokes}, section $ 3 $ or in \cite{kunstmann_uhl_spectral_multipliers_for_some_elliptic_systems}, Lemma $ 5.4. $ Consequently, we also have $ S_nP_H=P_HS_n $ and $ P_nP_H=P_HP_n $, since $ S_n $ and $ P_n $ are in the functional calculus of $ \Delta_H $. 
	
	For the second statement, we first show that $ M=P_HM $. Due to $ \Div\curl =0, $ we just have to show $ \curl u_1\cdot\nu=0 $ on $ \partial D $ for $ u_1\in H(\curl,0)(D). $ Definition \ref{stochastic_maxwell_trace} $ a) $ yields
	\[ \int_{D}\nabla\phi(x)\cdot\curl u_1(x)\operatorname{dx}=\int_{D}\curl\nabla\phi(x)\cdot u_1(x)\operatorname{dx}=0=\int_{D}\phi(x)\Div\curl u_1(x)\operatorname{dx}, \]
	for every $ \phi\in C^{\infty}(\overline{D}) $, which implies $ \curl u_1\cdot\nu=0 $ according to Definition \ref{stochastic_maxwell_trace} $ b). $ As a consequence of $ \curl\nabla =0, $ we know $ M(I-P_H)=0. $ All in all we get
	\[ MP_H-P_HM=MP_H-M=M(I-P_H)=0. \]
	
	Finally, the identity
	$$ \Delta_H=\binom{-\curl\curl+\operatorname{grad}\Div}{-\curl\curl+\operatorname{grad}\Div}=M^2 $$ 
	on $ D(M^2)\cap P_H(L^{2}(D)^6)=  D(\Delta_H)\cap P_H(L^{2}(D)^6) $ together with $ M(I-P_H)=0 $ imply
	\begin{align*}
		MP_n=MP_H\ind_{[0,2^n]}(-\Delta_H)=M\ind_{[0,2^n]}(-M^2)P_H=\ind_{[0,2^n]}(-M^2)MP_H =P_nM
	\end{align*}
	on $ D(M). $ For $ S_nM=MS_n, $ one may argue analogously.
	\end{proof}
\begin{Corollary}\label{stochastic_maxwell_density}
$ \bigcup_{n=1}^{\infty} R(P_n) $ is dense in $ D(M) $ and in $ L^{p}(D)^6 $ for any $ p\in (1,\infty). $
\end{Corollary}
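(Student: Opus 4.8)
The plan is to handle the two density assertions separately, using the spectral operators $P_n$ and $S_n$ from Proposition \ref{stochastic_maxwell_spectral_multiplier_l2_properties} as the natural approximating sequences, and to exploit the commutation of $P_n$ with $M$.

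For density in $D(M)$ equipped with its graph norm, I would take an arbitrary $u\in D(M)$ and approximate it by $P_nu$. By Proposition \ref{stochastic_maxwell_spectral_multiplier_l2_properties} iv) we have $P_nu\in R(P_n)\subseteq D(M)$, so each $P_nu$ is an admissible approximant from $\bigcup_n R(P_n)$. The convergence $P_nu\to u$ in $L^{2}(D)^6$ is exactly Proposition \ref{stochastic_maxwell_spectral_multiplier_l2_properties} vi). For the derivative term, the commutation relation $P_nM=MP_n$ on $D(M)$ from Lemma \ref{stochastic_maxwell_commutation_of_M_and_S_n} gives $MP_nu=P_n(Mu)$, and applying Proposition \ref{stochastic_maxwell_spectral_multiplier_l2_properties} vi) to $Mu\in L^{2}(D)^6$ yields $P_n(Mu)\to Mu$. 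Hence $P_nu\to u$ in the graph norm, which proves density in $D(M)$.

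For density in $L^{p}(D)^6$ the appropriate approximant is $S_{n-1}f$ rather than $P_nf$, because $S_n$, not $P_n$, is bounded on $L^{p}(D)^6$ uniformly in $n$ (Lemma \ref{stochastic_maxwell_properties_galerkin_uniformly_in_n}). Given $f\in L^{p}(D)^6$, Lemma \ref{stochastic_maxwell_properties_galerkin_uniformly_in_n} provides $S_{n-1}f\to f$ in $L^{p}(D)^6$, so it only remains to check $S_{n-1}f\in R(P_n)$. This is the one genuinely delicate point: for $p<2$ the function $f$ need not lie in $L^{2}(D)^6$, so the $L^{2}$-identity $P_nS_{n-1}=S_{n-1}$ from Proposition \ref{stochastic_maxwell_spectral_multiplier_l2_properties} v) cannot be invoked directly. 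The way around this is to note that the $L^{2}$-range $R(S_{n-1})$ is a finite-dimensional space spanned by eigenfunctions of the Hodge-Laplacian, hence a closed subspace $V_n\subseteq R(P_n)$ of $L^{p}(D)^6$ by Proposition \ref{stochastic_maxwell_spectral_multiplier_l2_properties} v). Since the $L^{p}$-extension of $S_{n-1}$ agrees with the $L^{2}$-operator on the dense subspace $L^{2}(D)^6\cap L^{p}(D)^6$, where its range already lies in $V_n$, boundedness on $L^{p}(D)^6$ together with the closedness of $V_n$ force $S_{n-1}\big(L^{p}(D)^6\big)\subseteq V_n\subseteq R(P_n)$. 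Thus $S_{n-1}f\in R(P_n)\subseteq\bigcup_n R(P_n)$ for every $n$, and density in $L^{p}(D)^6$ follows.

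I expect the main obstacle to be precisely this reconciliation of the $L^{2}$- and $L^{p}$-functional calculi in the sub-$L^{2}$ regime: confirming that applying the $L^{p}$-bounded operator $S_{n-1}$ to a function outside $L^{2}(D)^6$ still lands in the finite-dimensional spectral subspace $R(P_n)$, which is what makes $S_{n-1}f$ a legitimate approximant drawn from $\bigcup_n R(P_n)$. The $D(M)$-part, by contrast, is immediate once the commutation $MP_n=P_nM$ and the strong convergence $P_n\to I$ on $L^{2}(D)^6$ are in hand.
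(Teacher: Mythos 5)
Your proof is correct and follows essentially the same route as the paper: $P_nu\to u$ in the graph norm via the commutation $MP_n=P_nM$ from Lemma~\ref{stochastic_maxwell_commutation_of_M_and_S_n}, and $S_{n-1}f\to f$ in $L^{p}(D)^6$ (Lemma~\ref{stochastic_maxwell_properties_galerkin_uniformly_in_n}) combined with $R(S_{n-1})\subset R(P_n)$. The only difference is that you spell out why $S_{n-1}f$ still lands in the finite-dimensional spectral subspace when $p<2$ and $f\notin L^{2}(D)^6$ --- a worthwhile detail that the paper's proof leaves implicit by simply citing Proposition~\ref{stochastic_maxwell_spectral_multiplier_l2_properties}~v).
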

\begin{proof}
Let $ u\in D(M). $ Using the commutation property of $ P_n $ from Lemma \ref{stochastic_maxwell_commutation_of_M_and_S_n}, we get
\[ \|Mu-MP_nu\|_{L^{2}(D)^6}=\|Mu-P_nMu\|_{L^{2}(D)^6}\xrightarrow{n\to\infty} 0.\]
If on the other hand $ u\in L^{p}(D)^6, $ we get $ S_nu\to u $ in $ L^{p}(D)^6 $ from Lemma \ref{stochastic_maxwell_properties_galerkin_uniformly_in_n}. This together with Proposition \ref{stochastic_maxwell_spectral_multiplier_l2_properties} $ v) $ proves the claimed result.
\end{proof}

We also consider \eqref{stochastic_maxwell_intro} on $ \R^{3} $ and hence, we need an analogue to the $ P_n $ and $ S_n $ we defined above. However, in the absence of boundary conditions, things are far more easy. We define
\[ P_nf=S_nf:=\mathcal{F}^{-1}\big(\xi\mapsto \ind_{[-2^n,2^n]}(\xi_1)\ind_{ [-2^n,2^n]}(\xi_2)\ind_{[-2^n,2^n]} (\xi_3)\hat{f}(\xi)\big) \]
for $ f\in L^{2}(D)^6. $ As $ M $ is a differential operator, it commutes with this frequency cut-off. Moreover, $ P_n, S_n $ satisfy the same properties as in Propositions \ref{stochastic_maxwell_spectral_multiplier_l2_properties} expect $ iv) $. Further, as a consequence of the boundedness of the Hilbert transform on $ L^{p}(\R^{3}), $ they are bounded on $ L^{p}(\R^{3})^6 $. This finally results in an analogue to Lemma \ref{stochastic_maxwell_properties_galerkin_uniformly_in_n} and Corollary \ref{stochastic_maxwell_density}.
For details, we refer to \cite{Grafakos_classical}, chapter $ 6.1.3. $
We end this section with a Lemma showing mapping properties of $P_n $ as operator between $ L^{2}(D)^6 $ and $ L^{p}(D)^6. $
\begin{Lemma}\label{stochastic_maxwell_properties_galerkin}
	For fixed $ n\in\N $, $ p\in [2,\infty) $ and $ q\in (1,2] $, the operator
	$ P_n:L^{q}(D)^6\to L^{2}(D)^6 $ and $ P_n:L^{2}(D)^6\to L^{p}(D)^6 $ is linear and bounded.
\end{Lemma}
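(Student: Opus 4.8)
The plan is to treat the two mapping properties separately, deducing the first from the smoothing of the heat semigroup associated to $I+A^{(i)}$ and the second from it by duality. Throughout, $n$ is fixed, so every constant is allowed to depend on $n$. First I would record the consequence of the generalized Gaussian $(2,p)$ estimates of Proposition \ref{stochastic_maxwell_hodge_laplacian_gaussian_bounds} that I actually need: for each fixed $t>0$ and each $p\in[2,\infty)$ the semigroup $e^{-t(I+A^{(i)})}$ maps $L^{2}(D)^3$ boundedly into $L^{p}(D)^3$. This is the standard passage from the localized, off-diagonal form of generalized Gaussian bounds to a global $L^{2}$-$L^{p}$ smoothing estimate: one covers the bounded domain $D$ by balls of radius $t^{1/2}$, estimates each localized block by Proposition \ref{stochastic_maxwell_hodge_laplacian_gaussian_bounds}, and sums, the Gaussian factor $e^{-b|x-y|^2/t}$ making the double sum converge. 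I expect this summation to be the only genuinely technical point, though it is routine.

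Second, for the map $P_n\colon L^{2}(D)^6\to L^{p}(D)^6$ I would factor the spectral projection through the semigroup. Since $\sigma(I+A^{(i)})\subset[1,\infty)$, the Borel function $\lambda\mapsto\ind_{[0,2^n]}(\lambda)\,e^{\lambda}$ is bounded by $e^{2^n}$ on the spectrum, so the functional-calculus operator $R_n^{(i)}:=\ind_{[0,2^n]}(I+A^{(i)})\,e^{I+A^{(i)}}$ is bounded on $L^{2}(D)^3$ with norm at most $e^{2^n}$ by the bounded Borel functional calculus. Because the factors commute, $\ind_{[0,2^n]}(I+A^{(i)})=e^{-(I+A^{(i)})}R_n^{(i)}$, and reading this right to left exhibits the componentwise projection as $L^{2}\xrightarrow{R_n^{(i)}}L^{2}\xrightarrow{e^{-(I+A^{(i)})}}L^{p}$; assembling the two components gives $P_n\colon L^{2}(D)^6\to L^{p}(D)^6$ bounded.

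Third, for $P_n\colon L^{q}(D)^6\to L^{2}(D)^6$ with $q\in(1,2]$ I would argue by duality, using that $P_n$ is self-adjoint on $L^{2}(D)^6$ (Proposition \ref{stochastic_maxwell_spectral_multiplier_l2_properties}(ii)). Writing $q'=q/(q-1)\in[2,\infty)$, the previous step already yields $P_n\colon L^{2}\to L^{q'}$ bounded. For $f,g\in L^{2}(D)^6$, self-adjointness together with H\"older's inequality gives $|\langle P_nf,g\rangle_{L^{2}}|=|\langle f,P_ng\rangle_{L^{2}}|\le\|f\|_{L^{q}}\,\|P_n\|_{B(L^{2},L^{q'})}\,\|g\|_{L^{2}}$, whence $\|P_nf\|_{L^{2}}\le\|P_n\|_{B(L^{2},L^{q'})}\|f\|_{L^{q}}$. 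Since $D$ is bounded, $L^{2}(D)^6$ is dense in $L^{q}(D)^6$, so $P_n$ extends by continuity to the claimed bounded operator.

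Finally, for $D=\R^3$ the projection $P_n$ is a Fourier multiplier with compactly supported symbol, so the first mapping property becomes the Bernstein-Nikolskii inequality for band-limited functions, while the $L^{q}$-boundedness of the cut-off follows from the boundedness of the Hilbert transform already invoked in the text; the duality argument of the third step then carries over verbatim. The main obstacle, if any, is the first step, namely converting the abstract localized Gaussian bounds into honest $L^{2}$-$L^{p}$ smoothing of the semigroup; once that is in hand, the rest is functional calculus and duality.
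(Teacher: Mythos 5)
Your proposal is correct, but it takes a genuinely different route from the paper in the bounded-domain case. There the paper's proof is a one-liner: by Proposition \ref{stochastic_maxwell_spectral_multiplier_l2_properties}~iv) the range of $P_n$ is finite dimensional (it is spanned by the finitely many eigenvectors of $I+A^{(i)}$ with eigenvalue at most $2^n$), so all norms on $R(P_n)$ are equivalent and both mapping properties are immediate. You instead convert the generalized Gaussian bounds of Proposition \ref{stochastic_maxwell_hodge_laplacian_gaussian_bounds} into global $L^{2}$--$L^{p}$ smoothing of $e^{-t(I+A^{(i)})}$, factor $\ind_{[0,2^n]}(I+A^{(i)})=e^{-(I+A^{(i)})}\,\ind_{[0,2^n]}(I+A^{(i)})e^{I+A^{(i)}}$ through the semigroup, and dualize; every step of this is sound (the summation over a cover by balls is the standard Blunck--Kunstmann-type passage, and the exponential factor is harmlessly bounded by $e^{2^n}$ on the spectrum since $n$ is fixed). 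Your argument is heavier machinery than the lemma requires, but it is more robust: it does not use compactness of the resolvent, so it would survive on unbounded domains, and because the Gaussian bounds are quantitative in $t$ it would even let you track the growth of the constants in $n$ (choosing $t\sim 2^{-n}$ gives a Bernstein-type bound $2^{\frac{3n}{2}(\frac12-\frac1p)}$), information the paper's norm-equivalence argument cannot provide. On $\R^{3}$ the two proofs are mirror images: the paper proves $L^{q}\to L^{2}$ directly via Plancherel, H\"older on the compact frequency support and Hausdorff--Young, then gets $L^{2}\to L^{p}$ by duality, whereas you prove $L^{2}\to L^{p}$ via Bernstein--Nikolskii and dualize for the other direction; your aside about the Hilbert transform addresses $L^{q}\to L^{q}$ boundedness, which is not needed here, and in the duality step on $\R^{3}$ you should take $f\in L^{2}\cap L^{q}$ rather than $f\in L^{2}$, a cosmetic adjustment.
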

\begin{proof}
	This is trivial, if $ D $ is bounded, since all norms on a finite dimensional space are equivalent. In the other case, it is sufficient to show that $ P_n:L^{q}(\R^{3})^6\to L^{2}(\R^{3})^6 $ is bounded, the rest then follows by duality. 
%	
%	Using the boundedness of the Fourier transform on $ L^{2}(\R^{3}) $ and the representation of the Laplacian as
%	\[ \Delta f=\mathcal{F}^{-1}\big(\xi\mapsto -4\pi^2|\xi|^2\hat{f}(\xi) \big), \]
%	we get
%	\begin{align*}\label{stochastic_maxwell_mapping_properties_pn}
%	\|P_{n}u\|_{L^{2}(\R^{3})^6}&=\|\xi\mapsto\ind_{[-n,n]}(\xi)e^{4\pi^2n|\xi|^2}e^{-4\pi^2n|\xi|^2} \hat{u}(\xi)\|_{L^{2}(\R^{3})^6}\\
%	&\lesssim_{n}\|\xi\mapsto e^{-4\pi^2n|\xi|^2} \hat{u}(\xi)\|_{L^{2}(\R^{3})^6}=\|e^{-n\Delta}u\|_{L^{2}(\R^{3})^6}\lesssim_{q,n} \|u\|_{L^{q}(\R^{3})^6}
%	\end{align*}
%	by the well-know dispersive estimates for the heat semigroup.
The H\"older and the Hausdorff-Young inequality yield
\begin{align*}
\|P_nf\|_{L^{2}(\R^{3})^6}&=\|\xi\mapsto\ind_{[-2^n,2^n]}(\xi_1)\ind_{ [-2^n,2^n]}(\xi_2)\ind_{[-2^n,2^n]} (\xi_3)\hat{f}(\xi)\|_{L^2(\R^{3})^6}\lesssim_{n}\|\hat{f}\|_{L^{\tfrac{q}{q-1}}(\R^{3})^6}\\
&\leq \|f\|_{L^{q}(\R^{3})^3}. 
\end{align*}
\end{proof}

    \section{Existence and uniqueness of a weak solution}
In this section, we will prove existence and uniqueness of a weak solution in the sense of partial differential equations of 
\begin{equation*}
	(\operatorname{WSEE})\begin{cases}
		du(t)&=\big[Mu(t)-|u(t)|^qu(t)+(G\ast u)(t)+J(t)\big]\operatorname{dt}+B(t,u(t))dW_t,\\
		u(0)&=u_0
	\end{cases}
\end{equation*}
for any $ q> 0 $. For sake of readability, we sometimes write $ F(u):=|u|^qu. $ Before we start, we explain our solution concept.
%We equip $ L^{2} $ with a weighted scalar product such that $ \kappa^{-1}M $ is a skew-adjoint. Hence, define
%\[ \langle u,v\rangle_{L^2(D)^6}:=\int_{D}\kappa u(x)\cdot v(x)\operatorname{dx} \] 
%on $ L^{2}(D)^6 $ and set $ \|u\|_{\kappa}:=\sqrt{\langle u,u\rangle_{L^2(D)^6}}. $ In the sequel, we write $ L^{2}(D,\kappa) $ for the Hilbert space with this norm. 
\begin{Definition}\label{stochastic_maxwell_weak_solution_definition}
	We say that an adapted process $ u:\Omega\times [0,T]\to L^{2}(D)^6 $ with $$ u\in L^{2}(\Omega;C(0,T;L^{2}(D)))^6\cap L^{q+2}(\Omega\times[0,T]\times D)^6  $$ is a weak solution of $ (\operatorname{WSEE}) $, if 
\begin{align*}
\langle u(t)-u_0,\phi\rangle_{L^2(D)^6}
=&\int_{0}^{t} -\big\langle u(s),M\phi\big\rangle_{L^2(D)^6}+\big\langle-|u|^qu+J(s)+(G\ast u)(s),\phi\big\rangle_{L^2(D)^6}\operatorname{ds}\\
&+ \int_{0}^{t}\big\langle B(s,u(s)),\phi\big\rangle_{L^2(D)^6}dW(s).
\end{align*}
holds almost surely for all $ t\in [0,T] $ and for all $ \phi\in D(M)\cap L^{q+2}(D)^6. $ Moreover, we call a weak solution $ u $ unique, for any other weak solution $ v $, there exists $ N\subset \Omega $ with $ \PP(N)=0 $, such that $ u(\omega,t)=v(\omega,t) $ for all $ \omega\in\Omega\setminus N $ and all $ t\in [0,T]. $
\end{Definition}
We make the following assumptions.
\begin{itemize}
	\item [\lbrack\text{W1}\rbrack] Let $ D\subset\R^{3} $ be a bounded $ C^1 $-domain or $ D=\R^{3}. $  
%	\item [\lbrack\text{W2}\rbrack]let $ \kappa :D\to\C^{6\times 6} $ be a hermitian, uniformly bounded and uniformly positive definite matrix.
	\item[\lbrack\text{W2}\rbrack] The initial value $ u_0:\Omega\to L^{2}(D)^6 $ is strongly $ \mathcal{F}_0 $- measurable.
%\item[\lbrack\text{W3}\rbrack] Let $ N:D\to \C^{6\times 6} $ be a hermitian, uniformly bounded and uniformly positive definite matrix, with $ \sum_{i,j=1}^{6}N_{ij}\xi_i\overline{\xi_j}\geq \delta|\xi|^2 $
%for some $ \delta>0 $ and all $ \xi\in\C^6. $
	\item[\lbrack\text{W3}\rbrack] Let $ G:\Omega\times[0,T]\to B( L^{2}(D)^6), $ such that $ x\mapsto G(t)x $ is for all $ x\in L^{2}(D)^6 $ strongly measurable and $ \F $-adapted. Moreover, we assume 
	\[  \esssup_{\omega\in\Omega}\int_{0}^{T} \|G(\omega,t)\|_{B( L^{2}(D)^6)}\operatorname{dt}< \infty. \]
	\item[\lbrack\text{W4}\rbrack] Let $ U $ be a separable Hilbert space and $ W $ a $ U $-cylindrical Brownian motion. Moreover, let $ B:\Omega\times [0,T]\times D\times L^{2}(D)^6 \to L^2(U, L^{2}(D)^6) $ be strongly measurable, such that $ \omega\mapsto B(\omega,t,x,u) $ is for almost all $ t\in [0,T],\ x\in D $  and all  $u\in L^{2}(D)^6 $ $ \F $-adapted. Moreover, there exists $ C>0 $, such that $ B $ is of linear growth, i.e.
	\[ \|B(t,u)\|_{L^{2}(U; L^{2}(D)^6)}\leq C\|u\|_{ L^{2}(D)^6} \]
	 and Lipschitz
	\[ \|B(t,u)-B(t,v)\|_{L^{2}(U; L^{2}(D)^6)}\leq C\|u-v\|_{ L^{2}(D)^6} \]
	almost surely for almost all $ t\in [0,T] $ and all $ u,v\in  L^{2}(D)^6. $
	\item[\lbrack\text{W5}\rbrack] $ J:\Omega\times[0,T]\to  L^{2}(D)^6 $ is strongly measurable, $ \F $-adapted and we assume $ J\in L^{2}(\Omega\times[0,T]\times D)^6. $
%	\item[\lbrack\text{W7}\rbrack] There exists an orthonormal basis $ (h_n)_{n\in\N} $ of $\big( L^{2}(D,\kappa),\langle\cdot,\cdot\rangle_{L^2(D)^6}\big) $ such that $ h_n\in L^{\infty}(D)^6\cap D(M) $ for every $ n\in\N $ and such 
%	\[ \lim_{N\to\infty}M\sum_{j=1}^{N}\langle \phi,h_n \rangle_{L^2(D)^6}h_n=M\phi \]
%in $ L^{2}(D,\kappa) $ for every $ \phi\in C_c^{\infty}(D)^6. $
\end{itemize}
At first, we need an It\^o formula, that is appropriate to deal with weak solutions. Our result is a version of \cite{prevot_rockner_a_concise_course_on_stochastic_pde}, Theorem $ 4.2.5 $, that additionally allows a skew-adjoint operator $M $ in spite of the fact that our weak solution is not in $ D(M) $. Our proof relies on a more straightforward regularization technique than the original using the spectral multipliers $ S_n $ from section $ 2.2 $.
\begin{Lemma}\label{stochastic_maxwell_ito_formula}
	Let $ X_0\in L^{2}(\Omega\times D)^6 $ and $ Y\in L^{\frac{q+2}{q+1}}(\Omega\times [0,T]\times D)^6+L^{2}(\Omega\times[0,T]\times D)^6 $ and $ Z\in L^{2}(\Omega\times[0,T];L^{2}(U;L^{2}(D)^6)) $ be $ \F $-adapted. If
\begin{align*}
 \langle X(t),\phi\rangle_{L^2(D)^6}=&\langle X_0,\phi\rangle_{L^2(D)^6}+\int_{0}^{t}-\langle X(s),M\phi\rangle_{L^2(D)^6} +\langle  Y(s),\phi\rangle_{L^2(D)^6}\operatorname{dt}\\
 &+\int_{0}^{t}\langle Z(s),\phi\rangle_{L^2(D)^6}dW(s)
\end{align*}
	almost surely for all $ t\in [0,T] $ and all $ \phi\in D(M)\cap L^{q+2}(D)^6 $  and we additionally have the regularity $ X\in L^{q+2}(\Omega\times [0,T]\times D)^6\cap L^{2}(\Omega\times[0,T]\times D)^6, $ the It\^o formula
	\begin{align}\label{stochastic_maxwell_ito_formula_formula}
	\|X(t)&\|^2_{L^{2}(D)^6}- \|X_0\|^2_{L^{2}(D)^6}\notag\\
	&=\int_{0}^{t}2\re\langle X(s),Y(s)\rangle_{L^2(D)^6}+\|Z(s)\|_{L^{2}(U;L^{2}(D)^6}^2\operatorname{ds}+2\int_{0}^{t}\re\big\langle X(s),Z(s)dW(s)\big\rangle_{L^2(D)^6}.
	\end{align}
	holds almost surely for all $ t\in [0,T] $ and $ X\in L^{2}(\Omega;C(0,T;L^{2}(D)))^6. $
\end{Lemma}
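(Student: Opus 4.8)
The plan is to regularize with the spectral multipliers $S_n$ so as to lift the weak identity to a genuine $L^2(D)^6$-valued It\^o process, apply the classical Hilbert-space It\^o formula there, and then pass to the limit $n\to\infty$. First I would test the weak identity with $S_n\psi$ for $\psi\in D(M)\cap L^{q+2}(D)^6$; since $R(S_n)\subset D(M)$ (Proposition \ref{stochastic_maxwell_spectral_multiplier_l2_properties}) and $S_n$ is bounded on $L^{q+2}(D)^6$ (Lemma \ref{stochastic_maxwell_properties_galerkin_uniformly_in_n}), this is an admissible test function. Using the self-adjointness of $S_n$ on $L^2(D)^6$ and the commutation $MS_n=S_nM$ from Lemma \ref{stochastic_maxwell_commutation_of_M_and_S_n}, the process $X_n:=S_nX$ satisfies
\[\langle X_n(t),\psi\rangle_{L^2(D)^6}=\langle S_nX_0,\psi\rangle_{L^2(D)^6}+\int_0^t -\langle X_n(s),M\psi\rangle_{L^2(D)^6}+\langle S_nY(s),\psi\rangle_{L^2(D)^6}\operatorname{ds}+\int_0^t\langle S_nZ(s),\psi\rangle_{L^2(D)^6}dW(s).\]
Because $X_n(s)\in R(S_n)\subset D(M)$ and $M$ is skew-adjoint (Proposition \ref{stochastic_maxwell_maxwell_skew}), I may replace $-\langle X_n(s),M\psi\rangle_{L^2(D)^6}$ by $\langle MX_n(s),\psi\rangle_{L^2(D)^6}$. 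As $\psi$ ranges over a dense subset of $L^2(D)^6$, this shows that $X_n$ is a strong $L^2(D)^6$-valued solution of
\[dX_n(t)=\big[MX_n(t)+S_nY(t)\big]\operatorname{dt}+S_nZ(t)\,dW(t),\qquad X_n(0)=S_nX_0,\]
with all coefficients in $L^2(D)^6$ (here $S_nY\in L^2(D)^6$ and $S_nZ\in L^2(U;L^2(D)^6)$ by the mapping properties of $S_n$, cf.\ Lemma \ref{stochastic_maxwell_properties_galerkin}).

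Next I would apply the classical It\^o formula for the square of the $L^2(D)^6$-norm to $X_n$. The decisive simplification is that the drift contribution of $M$ \emph{vanishes}: since $M$ is skew-adjoint, $\re\langle X_n(s),MX_n(s)\rangle_{L^2(D)^6}=0$, so that
\[\|X_n(t)\|_{L^2(D)^6}^2-\|S_nX_0\|_{L^2(D)^6}^2=\int_0^t 2\re\langle X_n(s),S_nY(s)\rangle_{L^2(D)^6}+\|S_nZ(s)\|_{L^2(U;L^2(D)^6)}^2\operatorname{ds}+2\int_0^t\re\langle X_n(s),S_nZ(s)dW(s)\rangle_{L^2(D)^6}.\]

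It then remains to let $n\to\infty$ in every term. Since $S_n\to I$ strongly on $L^2(D)^6$ with $\|S_n\|=1$ (Proposition \ref{stochastic_maxwell_spectral_multiplier_l2_properties}), the initial term, the quadratic-variation term (by dominated convergence over an orthonormal basis of $U$, dominated by $\|Z(s)\|^2$), and the norm $\|X_n(t)\|^2$ converge to their expected limits. For the drift term I would split $Y=Y_1+Y_2$ with $Y_1\in L^{\frac{q+2}{q+1}}(D)^6$ and $Y_2\in L^2(D)^6$ and write the integrand as $2\re\langle S_nX(s),S_nY(s)\rangle_{L^2(D)^6}$; the crucial point is the uniform $L^{q+2}$-boundedness together with $S_nX\to X$ in $L^{q+2}(D)^6$ and $S_nY_1\to Y_1$ in $L^{\frac{q+2}{q+1}}(D)^6$ from Lemma \ref{stochastic_maxwell_properties_galerkin_uniformly_in_n}, which lets me pass to the limit in the $L^{q+2}$--$L^{\frac{q+2}{q+1}}$ duality, while the $L^2$ part is routine. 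Dominated convergence on $\Omega\times[0,T]$, justified by the a priori integrability of $X,Y,Z$, handles the time integrals, and the It\^o isometry (or a Burkholder--Davis--Gundy estimate) gives convergence of the stochastic integrals. This yields the formula for almost every $t$. Finally, to obtain it almost surely for all $t$ and the continuity $X\in L^2(\Omega;C(0,T;L^2(D)))^6$, I would note that the right-hand side is $t$-continuous and that $t\mapsto\langle X(t),\phi\rangle_{L^2(D)^6}$ is continuous for $\phi$ in a dense set, so $X$ is weakly continuous; weak continuity together with continuity of $t\mapsto\|X(t)\|_{L^2(D)^6}$ forces strong continuity in $L^2(D)^6$.

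I expect the main obstacle to be the limit passage in the drift term: because $Y$ lives only in the sum space $L^{\frac{q+2}{q+1}}(D)^6+L^2(D)^6$ while $X$ lives in $L^{q+2}(D)^6\cap L^2(D)^6$, the pairing $\langle X,Y\rangle_{L^2(D)^6}$ is genuinely a duality pairing with the nonlinearity-adapted exponents, and controlling it uniformly in $n$ is exactly what the uniform $L^p$-boundedness of $S_n$ (hence the generalized Gaussian bounds behind Lemma \ref{stochastic_maxwell_properties_galerkin_uniformly_in_n}) is needed for. The remaining care is bookkeeping the two cases $D$ bounded and $D=\R^3$, where in the latter $S_n=P_n$ and the finite-dimensionality of the range is replaced by the $L^p$-estimate of Lemma \ref{stochastic_maxwell_properties_galerkin}.
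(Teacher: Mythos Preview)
Your proposal is correct and follows essentially the same route as the paper: regularize by testing with $S_n\psi$, use the self-adjointness of $S_n$ and the commutation $MS_n=S_nM$ to obtain a genuine $L^2(D)^6$-valued It\^o process, apply the classical It\^o formula where the $M$-contribution drops by skew-adjointness, and pass to the limit via the uniform $L^p$-boundedness of $S_n$. The only cosmetic differences are that the paper tests with $S_n\Phi$ for $\Phi\in C_c^\infty(D)^6$ rather than $\psi\in D(M)\cap L^{q+2}(D)^6$, and it obtains pathwise continuity by applying the formula to the increment $X(t)-X(s)$ instead of your weak-continuity-plus-norm-continuity argument; also note that your citation of Lemma~\ref{stochastic_maxwell_properties_galerkin} for the mapping $S_nY\in L^2(D)^6$ is formally about $P_n$, but the claim is still true since $R(S_n)$ is finite-dimensional in the bounded case and $S_n=P_n$ on $\R^3$.
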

\begin{proof}
%	Throughout this proof, we extend $ X,Y $ and $ Z $ by zero to functions on $ \R^{3} $. Let $ \delta\subset C_c^{\infty}(\R^{d}) $ be a real-valued, nonnegative function with $ \delta=\delta(-\cdot) $ and $ \int_{\R^{d}}\delta(x)\operatorname{dx}=1 $. We define $ \delta_\varepsilon(x):=\varepsilon^{-3}\delta(\tfrac{x}{\varepsilon}) $ for $ x\in\R^{d} $ and $ \varepsilon>0 $ Then, $ (\delta_\varepsilon)_{\varepsilon} $ is an approximation to the identity for $ \varepsilon\to 0 $ and we have $ f\ast\delta_\varepsilon\to f $ in $ L^{p}(\R^{d}) $, if $ f\in L^{p}(\R^{d})  $ for any $ p\in [1,\infty). $ Note, that convolution with $ \delta_\varepsilon $ is a symmetric operator with respect to $ \langle\cdot,\cdot\rangle_{L^2}(D)^6, $ as $ \delta $ was chosen symmetrically.
We plug in $ \phi=S_n\Phi $ for $ \Phi\in C_c^{\infty}(D)^6 $. Note, that by Lemma \ref{stochastic_maxwell_commutation_of_M_and_S_n}, $ S_n $ and $ M $ commute. Moreover, $ R(S_n)\subset D(M). $
%given $ u\in L^{2}(D)^6, $ we calculate
%\begin{align*}
%\langle X,M(\delta_\varepsilon\ast\Phi)\rangle_{L^2(D)^6}&=\langle X,\delta_\varepsilon\ast M\Phi\rangle_{L^{2}(D)^6}=\langle\delta_\varepsilon\ast X,M\Phi\rangle_{L^{2}(D)^6}=-\langle M (\delta_\varepsilon\ast u),\Phi\rangle_{L^{2}(D)^6}.
%\end{align*}
 Consequently, since $ S_n $ is self-adjoint and $ \Phi $ is chosen arbitrarily, we obtain 
	\begin{align*}\label{stochastic_maxwell_ito_formula_convultion}
	S_n X(t)-S_n X_0=\int_{0}^{t}MS_nX(s)+S_nY(s)\operatorname{ds}+\int_{0}^{t}S_nZ(s) dW(s).
	\end{align*} 
	almost surely for all $ t\in [0,T] $ in $ L^{2}(D)^6 $ and we can apply the standard It\^o formula for Hilbert space valued processes (see e.g. \cite{da_prato_zabczyk_stochastic_equations_in_infinite_dimensions}, Theorem $ 4.32 $) to get
\begin{align*}
\|S_n X(t)\|^2_{L^{2}(D)^6}&- \|S_n X_0\|^2_{L^{2}(D)^6}\\
=&\int_{0}^{t}2\re\langle S_nX(s),M S_nX(s)\rangle_{L^2(D)^6} +2\re\langle S_nX(s),S_nY(s)\rangle_{L^2(D)^6}\\
&+\|S_n Z(s)\|_{L^{2}(U;L^{2}(D))^6}^2\operatorname{ds}
+2\int_{0}^{t}\re\big\langle S_nX(s),S_nZ(s)dW(s)\big\rangle_{L^2(D)^6}.
\end{align*}
By Lemma \ref{stochastic_maxwell_maxwell_skew} and Proposition \ref{stochastic_maxwell_spectral_multiplier_l2_properties} $ S_nMS_n $ is skew-adjoint and the first term on the right hand side drops. In all the other terms, we can pass to the limit. Thereby we need, that $ S_n u\to u $ for $ n\to\infty $ in $ L^{q+2}(D)^6 $ and $ L^{\frac{q+2}{q+1}}(D)^6 $ (see Lemma \ref{stochastic_maxwell_properties_galerkin_uniformly_in_n}). This finally yields
\begin{align*}
\|X(t)&\|^2_{L^{2}(D)^6}- \| X_0\|^2_{L^{2}(D)^6}\\
&=\int_{0}^{t} 2\re\langle X(s),Y(s)\rangle_{L^2(D)^6}+\|Z(s)\|_{L^{2}(U;L^{2}(D)^6)}^2\operatorname{ds}
+2\int_{0}^{t}\re\big\langle X(s),Z(s)dW(s)\big\rangle_{L^2(D)^6}.
\end{align*}
This identity together with $ X\in L^{q+2}(\Omega\times[0,T]\times D)^6\cap L^{2}(\Omega\times [0,T]\times D)^6 $ implies $ u\in L^{2}(\Omega;L^{\infty}(0,T;L^{2}(D)))^6. $ The pathwise continuity in time can be shown by applying the above result to the difference $ X(t)-X(s). $ This closes the proof.
\end{proof}

At first, we assume $ G\equiv 0 $ and solve $ (\operatorname{WSEE}) $ without retarded material law. The reason for this simplification is that we make use of the monotone structure of the rest of the equation. We start with a Galerkin approximation with the spectral projection $ P_n, $ we defined in section $ 2. $
We investigate the truncated equation
\begin{equation}\label{maxwell_nonlinear_galerkin_approx_equation}
\begin{cases}
du_n&=\big[P_nMu_n-P_nF(u_n(t))+P_nJ\big]\operatorname{dt}+P_nB(t,u_n(t))dW(t),\\
u_n(0)&=P_nu_0
\end{cases}
\end{equation}
in the range of $ P_n $. This is a stochastic ordinary differential equation in $ R(P_n)\subset L^{2}(D)^6 $ with a locally Lipschitz nonlinearity (see Lemma \ref{stochastic_maxwell_nonlinearity_properties}). Hence, there exists an increasing sequence of stopping times $ (\tau^{(m)}_n)_{m\in\N} $ with $  0<\tau^{(m)}_n\leq T $ almost surely, a stopping time $ \tau_n=\lim_{m\to\infty}\tau_n^{(m)} $ and an adapted process $ u_n:\Omega\times [0,\tau)\to R(P_n) $ with continuous paths, that solves \eqref{maxwell_nonlinear_galerkin_approx_equation}. Moreover, we have the blow-up alternative
\begin{align}\label{maxwell_nonlinear_galerkin_approx_equation_blow_up}
P\Big\{\tau_n<T, \sup_{t\in[0,\tau)}\|u_n(t)\|_{L^{2}(D)^6}<\infty\Big \}=0.
\end{align}
The next result shows $ \tau_n=T $ for every $ n\in\N $ and a uniform estimate for $ u_n. $
\begin{Proposition}\label{stochastic_maxwell_weak_approx_equation_solve}
	We have $ \tau_n=T $ for every $ n\in\N $ and $ u_n $ additionally satisfies
	\begin{align*}
	\sup_{n\in\N}\E \sup\limits_{t\in[0,T]}\|u_n(t)\|^2_{L^{2}(D)^6}+\sup_{n\in\N}\E\int_{0}^{T}\int_{D}|u_n(t,x)|^{q+2}\operatorname{dx}\operatorname{dt}<\infty.
	\end{align*}
\end{Proposition}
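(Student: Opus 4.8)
The plan is to run the standard energy estimate for monotone stochastic evolution equations, exploiting that the truncated equation \eqref{maxwell_nonlinear_galerkin_approx_equation} lives in the finite-dimensional space $R(P_n)\subset D(M)$, so that the classical finite-dimensional It\^o formula (e.g. \cite{da_prato_zabczyk_stochastic_equations_in_infinite_dimensions}, Theorem $4.32$) applies without any regularization issue. First I would fix $n$ and work on the stochastic interval $[0,t\wedge\tau_n^{(m)}]$, where $\tau_n^{(m)}$ is the exit time of $\|u_n\|_{L^2(D)^6}$ from the ball of radius $m$, so that all quantities are bounded. Applying It\^o's formula to $t\mapsto\|u_n(t)\|_{L^2(D)^6}^2$ produces the drift contribution $2\re\langle u_n,P_nMu_n-P_nF(u_n)+P_nJ\rangle_{L^2(D)^6}$, the quadratic-variation term $\|P_nB(s,u_n)\|_{L^2(U;L^2(D)^6)}^2$, and a local-martingale part.

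The core of the argument is the treatment of each drift term. Since $u_n\in R(P_n)$ we have $P_nu_n=u_n$, and because $P_n$ is self-adjoint (Proposition \ref{stochastic_maxwell_spectral_multiplier_l2_properties}) and commutes with $M$ with $R(P_n)\subset D(M)$ (Lemma \ref{stochastic_maxwell_commutation_of_M_and_S_n}), the Maxwell term reduces to $\re\langle u_n,Mu_n\rangle_{L^2(D)^6}$, which vanishes by skew-adjointness of $M$ (Proposition \ref{stochastic_maxwell_maxwell_skew}). For the nonlinearity, self-adjointness of $P_n$ again reduces $\re\langle u_n,P_nF(u_n)\rangle$ to $\re\langle u_n,F(u_n)\rangle$, and the monotonicity estimate \eqref{stochastic_maxwell_properties_nonlinearity_monotonicity} evaluated at $v=0$ gives $\re\langle u_n,F(u_n)\rangle_{L^2(D)^6}\ge C\|u_n\|_{L^{q+2}(D)^6}^{q+2}$, the dissipative term I want to keep on the left. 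The source is bounded by Young's inequality, $2\re\langle u_n,P_nJ\rangle\le\|u_n\|_{L^2(D)^6}^2+\|J\|_{L^2(D)^6}^2$, and the noise term by the linear growth in [W4] together with $\|P_n\|_{B(L^2(D)^6)}=1$, so that $\|P_nB(s,u_n)\|^2\le C^2\|u_n\|_{L^2(D)^6}^2$. I emphasize that every constant here is independent of $n$.

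Combining these and taking expectations on $[0,t\wedge\tau_n^{(m)}]$, where the stopped stochastic integral has zero mean, leaves a Gronwall inequality for $\E\|u_n(t\wedge\tau_n^{(m)})\|_{L^2(D)^6}^2$ with constant $1+C^2$, yielding a bound $C_T$ uniform in $n,m,t$. To upgrade this to global existence I would observe that on $\{\tau_n^{(m)}<T\}$ path-continuity forces $\|u_n(\tau_n^{(m)})\|_{L^2(D)^6}=m$, so Chebyshev's inequality gives $\PP\{\tau_n^{(m)}<T\}\le C_T/m^2\to0$; since $\tau_n^{(m)}\uparrow\tau_n$ this shows $\PP\{\tau_n<T\}=0$, i.e. $\tau_n=T$ almost surely, which is precisely the content demanded by the blow-up alternative \eqref{maxwell_nonlinear_galerkin_approx_equation_blow_up}.

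Finally, for the supremum-in-time bound I would return to the energy identity on $[0,T]$ and estimate the martingale term by the Burkholder--Davis--Gundy inequality, $\E\sup_{t\le T}|\int_0^t\re\langle u_n,P_nB\,dW\rangle|\lesssim\E(\sup_{t\le T}\|u_n\|_{L^2(D)^6}^2\int_0^T\|B(s,u_n)\|^2\,ds)^{1/2}$, and then apply Young's inequality to absorb $\tfrac12\E\sup_{t\le T}\|u_n\|_{L^2(D)^6}^2$ into the left-hand side, controlling the remaining integral by the $L^2$-in-time bound already obtained. This gives $\sup_n\E\sup_{t\le T}\|u_n\|_{L^2(D)^6}^2<\infty$. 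Keeping the dissipative term $2C\int_0^T\|u_n\|_{L^{q+2}(D)^6}^{q+2}\,ds$ on the left in the expected identity and bounding the right-hand side by the quantities just controlled yields the second claim $\sup_n\E\int_0^T\|u_n\|_{L^{q+2}(D)^6}^{q+2}\,ds<\infty$. I expect the main obstacle to be not any single estimate but ensuring uniformity in $n$ throughout; this hinges entirely on the $n$-independent operator norm $\|P_n\|_{B(L^2(D)^6)}=1$ together with the commutation $P_nM=MP_n$, which makes the potentially dangerous unbounded Maxwell term drop out \emph{exactly} rather than having to be estimated.
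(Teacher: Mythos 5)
Your proposal is correct and follows essentially the same route as the paper: It\^o's formula for $\|u_n\|_{L^2(D)^6}^2$ on the stopped interval, cancellation of the Maxwell term by skew-adjointness and the commutation/self-adjointness of $P_n$, the dissipative sign of the nonlinearity kept on the left, Young's inequality for $J$ and the linear growth of $B$, Burkholder--Davis--Gundy plus Gronwall for the uniform-in-$n$ supremum bound, and finally the blow-up alternative to conclude $\tau_n=T$ (your Chebyshev step and the paper's decomposition of $\{\tau_n<T\}$ are interchangeable). The only quibble is that $R(P_n)$ is finite-dimensional only for bounded $D$, not for $D=\R^3$; what actually matters is that $M$ restricted to $R(P_n)$ is bounded, so the Hilbert-space It\^o formula still applies and your argument is unaffected.
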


\begin{proof}
Lemma \ref{stochastic_maxwell_ito_formula} applied to $ u_n $, the self-adjointness of $ P_n $ and $ P_n^2=P_n2 $ yield
	\begin{align*}
	\|u_n(s)&\|_{L^{2}(D)^6}^{2}-\|P_nu_0\|_{L^{2}(D)^6}^{2}\\
	=&2\int_{0}^{s}\re\langle u_n(r),-|u_n(r)|^qu_n(r)+J(r)\rangle_{L^2(D)^6}\operatorname{dr}\\
	&+2\int_{0}^{s}\re\big\langle u_n(r),B(s,u_n(r))dW(r)\big\rangle_{L^2(D)^6}+\int_{0}^{s}\|P_nB(r,u_n(r))\|_{L^{2}(U;L^{2}(D)^6)}^{2}\operatorname{dr}.
	\end{align*}
	almost surely for every $ s\in[0,\tau_n^{(m)}].$  This expression simplifies to
	\begin{align}\label{stochastic_maxwell_transformation_approx_equation_solve_bla_bla_bla}
	\|u_n(s)&\|_{L^{2}(D)^6}^{2}+\int_{0}^{s}\int_{D}|u_n(s,x)|^{q+2}\operatorname{dx}\operatorname{dt}-\|P_nu_0\|_{L^{2}(D)^6}^{2}\notag\\
	\leq&2\int_{0}^{s}\re\big\langle u_n(r),J(r)\rangle_{L^2(D)^6}+\tfrac{1}{2}\|B(r,u_n(r))\|_{L^{2}(U;L^{2}(D)^6)}^{2}\operatorname{dr}\notag\\
	&+2\sum_{j=1}^{N}\int_{0}^{s}\re\big\langle u_n(r),B(s,u_n(r))dW(r)\big\rangle_{L^2(D)^6}.
	\end{align}
	almost surely for every $ s\in[0,\tau_n^{(m)}].  $ Since the second term on the left hand side is positive, we can drop it for a moment. We first take the supremum over $ [0,\tau_n^{(m)}\wedge t] $ for $ t\in [0,T] $ and than the expectation value and estimate the remaining quantities term by term. We start with the deterministic part using $ [\operatorname{W6}] $ and $ [\operatorname{W7}]. $
	\begin{align*}
	\E\sup_{s\in[0,\tau_n^{(m)}\wedge t]}\Big|\int_{0}^{s}\re\big\langle& u_n(r),J(r)\rangle_{L^2(D)^6}+\tfrac{1}{2}\|B(r,u_n(r))\|_{L^{2}(U;L^{2}(D)^6)}^{2}\operatorname{dr}\Big|\\
	&\lesssim_{B} \int_{0}^{t}\E \ind_{s\leq \tau_n^{(m)}}\|u_n(s)\|_{L^{2}(D)^6}\|J(s)\|_{L^{2}(D)^6}+\|u_n(s)\|_{L^{2}(D)^6}^2\operatorname{ds}\\
	&\lesssim \int _{0}^{t}\E\sup_{r\in [0,s\wedge\tau_n^{(m)}]}\|u_n(r)\|_{L^{2}(D)^6}^{2}\operatorname{ds}+\|J\|_{L^{2}(\Omega\times[0,T]\times D)^6}^{2}.
	\end{align*}
	The stochastic part can be estimated with the Burkholder-Davies-Gundy inequility. 
	\begin{align*}
	\E\sup_{s\in[0,t\wedge\tau_n^{(m)}]}\Big|\int_{0}^{s}&\re\langle u_n(s),B(s,u_n(s))\rangle_{L^2(D)^6}dW(s)\Big|\\
	&\leq C\E\Big( \int_{0}^{\tau_n^{(m)}\wedge t}\big|\langle u_n(s),B(s,u_n(s)\rangle_{L^{2}(U,L^{2}(D)^6)}\big|^2\operatorname{ds}\Big)^{1/2}\\
		&\leq \widetilde{C}\E\Big(\sup_{s\in[0,t\wedge \tau_n^{(m)}]}\|u_n(s)\|_{L^{2}(D)^6}\big(\int_{0}^{t\wedge \tau_n^{(m)}}\|u(t)\|_{L^{2}(D)^6}^2\operatorname{dt}\big)^{\frac{1}{2}}\Big)\\
		&\leq \frac{1}{4}\E\sup_{s\in[0,t\wedge \tau_n^{(m)}]}\|u_n(s)\|_{L^{2}(D)}^{2}+\widetilde{C}^2\int_{0}^{t}\E\sup_{r\in[0,s\wedge \tau_n^{(m)}]}\|u(r)\|_{L^{2}(D)^6}^2\operatorname{ds}
		\end{align*}
	Putting these estimates together, we get
	\begin{align*}
	\E&\sup_{s\in[0,t\wedge\tau_n^{(m)}]}\|u_n(s)\|_{L^{2}(D)^6}^{2}
	\lesssim \|u_0\|_{L^{2}(D)^6}^{2}+\|J\|_{L^{2}(\Omega\times[0,T]\times D)^6}^{2}+\int _{0}^{t}\E\sup_{r\in [0,s\wedge\tau_n^{(m)}]}\|u_n(r)\|_{L^{2}(D)^6}^{2}\operatorname{ds}
	\end{align*}
	for all $ t\in [0,T]. $	Consequently, Gronwall yields  
	\begin{align*}
	\E\sup_{t\in[0,\tau_n^{(m)}]}\|u_n(t)\|_{L^{2}(D)^6}^{2}\lesssim  \|J\|_{L^{2}(\Omega\times[0,T]\times D)}^{2}+\|u_0\|_{L^{2}(D)^6}^{2}.
	\end{align*}
	Now, we can go back to \eqref{stochastic_maxwell_transformation_approx_equation_solve_bla_bla_bla} and deal with the term, we dropped at first. The estimates of $ \E\sup_{t\in[0,\tau_n^{(m)}]}\|u_n(t)\|_{L^{2}(D)}^{2} $ imply
	\begin{align*}
	\E\int_{0}^{\tau_n^{(m)}}\int_{D}&|u_n(s,x)|^{q+2}\operatorname{dx}\operatorname{dt}\lesssim \|J\|_{L^{2}(\Omega\times[0,T]\times D)^6}^{2}+\|u_0\|_{L^{2}(D)^6}^{2}
	\end{align*}
	We use Fatou's Lemma to pass to the limit $ m\to\infty $ in these estimates. Note, that one can interchange $ \operatorname{sup} $ and $ \liminf $ in an upper estimate, since $ \liminf $ can be written in the form $ \sup \inf $ and supremums can be interchanged, whereas $ \sup \inf\leq\inf \sup $. Hence, we have
		\begin{align}\label{stochastic_maxwell_weak_solution_a_priori_esti}
		\E\sup_{t\in[0,\tau_n)}\|u_n(t)\|_{L^{2}(D)^6}^{2}+\E\int_{0}^{\tau_n}\int_{D}|u_n(s,x)|^{q+2}\operatorname{dx}\operatorname{dt}\lesssim \|J\|_{L^{2}(\Omega\times[0,T]\times D)^6}^{2}+\|u_0\|_{L^{2}(D)^6}^{2}.
		\end{align}
		 Consequently, we also have $ \tau_n=T $ almost surely. Indeed, there exists $ N\subset\Omega $ with $ \PP(N)=0 $, such that $ \Omega\setminus \big(N\cup \{\tau_n=T \} \big) $ can be decomposed into disjoint sets 
		\begin{align*}
		\Big\{\tau_n <T, \sup_{t\in[0,\tau_n)}\|u_n(t)\|^2_{L^2(D)^6}<\infty\Big \},\ \Big\{\tau_n<T, \sup_{t\in[0,\tau_n)}\|u_n(t)\|^2_{L^2(D)^6}=\infty\Big \}.
		\end{align*}
		The first set has measure zero by \eqref{maxwell_nonlinear_galerkin_approx_equation_blow_up} and the second one has measure zero, since \eqref{stochastic_maxwell_weak_solution_a_priori_esti} implies $ \sup_{t\in[0,\tau_n)}\|u_n(t)\|_{L^2(D)^6}<\infty $ almost surely. Pathwise uniform continuity on $ [0,T] $ follows from Lemma \ref{stochastic_maxwell_ito_formula}. This closes the proof.
\end{proof}
In Proposition \ref{stochastic_maxwell_weak_approx_equation_solve}, we derived uniform estimates for $ u_n$. As a consequence, Lemma \ref{stochastic_maxwell_properties_nonlinearity_monotonicity} yields the uniform boundedness of $ F(u_n) $ in $ L^{\frac{q+2}{q+1}}(\Omega\times[0,T]\times D)^6 $. Thus, by Banach-Alaoglu, there exists $ u\in L^{2}(\Omega;L^{\infty}(0,T;L^{2}(D)^6))  $, $ N\in L^{\frac{q+2}{q+1}}(\Omega\times[0,T]\times D)^6 $, $ \widetilde{B}\in L^{2}(\Omega\times[0,T];L^{2}(U;L^{2}(D)))^6 $ and subsequences, still indexed with $ n $, such that
\begin{itemize}
	\item[a)] $ u_n\to u $ for $ n\to\infty $ in the $ \operatorname{weak}^{*} $ sense in $ L^{2}(\Omega;L^{\infty}(0,T;L^{2}(D)))^6. $
	\item[b)] $ u_n\to u $ for $ n\to\infty $ in the weak sense in $ L^{q+2}(\Omega\times[0,T]\times D)^6. $
	\item [c)] $ F(u_n)\to N $ for $ n\to\infty $ in the weak sense in $ L^{\frac{q+2}{q+1}}(\Omega\times[0,T]\times D)^6. $
	\item [d)] $ B(\cdot,u_n)\to \widetilde{B} $ for $ n\to\infty $ in the weak sense in $ L^{2}(\Omega\times[0,T];L^{2}(U;L^{2}(D)))^6. $
\end{itemize}
Testing \eqref{maxwell_nonlinear_galerkin_approx_equation} with $ \rho\phi $ for arbitrary $ \rho\in L^{q+2}(\Omega\times [0,T]) $ and $ \phi\in \bigcup_{n=1}^{\infty}R(P_n) $, the symmetry of $ P_n $ and the skew-symmetry of $ M $ yield
\begin{align*}
\E\int_{0}^{T}\langle u_n(t)-u_0,&\phi\rangle_{L^2(D)^6}\rho(t)\operatorname{dt}\\
=&\E\int_{0}^{T}\int_{0}^{t}-\langle u_n(s),MP_n\phi\rangle_{L^2(D)^6}+\langle-F(u_n(s))+J(s),P_n\phi\rangle_{L^2(D)^6}\operatorname{ds}\rho(t)\operatorname{dt}\\
&+\E\int_{0}^{T} \int_{0}^{t}\langle B(s,u_n(s)),P_n\phi\rangle_{L^2(D)^6}dW(s)\rho(t)\operatorname{dt}.
\end{align*}
By weak convergence, we can pass to the limit and obtain
\begin{align*}
\E\int_{0}^{T}\langle u(t)-P_nu_0,&\phi\rangle_{L^2(D)^6}\rho(t)\operatorname{dt}\\
=&\E\int_{0}^{T}\int_{0}^{t} -\langle u(s),M\phi\rangle_{L^2(D)^6}+\langle y(s),-N(s)+J(s),\phi\rangle_{L^2(D)^6}\operatorname{ds}\rho(t)\operatorname{dt}\\
&+\E\int_{0}^{T} \int_{0}^{t}\langle \widetilde{B}(s),\phi\rangle_{L^2(D)^6}dW(s)\rho(t)\operatorname{dt}.
\end{align*}
Thereby, we used $ P_n\phi=\phi $ for $ n $ large enough since $ \phi\in \bigcup_{n=1}^{\infty}R(P_n) $ and that linear and bounded operators are also weakly continuous. Since $ \rho $ was chosen arbitrarily, we finally get
\begin{align}\label{stochastic_maxwell_weak_limit}
\langle u(t)-u_0,&\phi\rangle_{L^2(D)^6}\notag\\
=&\int_{0}^{t} -\langle u(s),M\phi\rangle_{L^2(D)^6}+\langle u(s),-N(s)+J(s),\phi\rangle_{L^2(D)^6}\operatorname{ds}+ \int_{0}^{t}\langle \widetilde{B}(s),\phi\rangle_{L^2(D)^6}dW(s).
\end{align}
Hence, by density (see Lemma \ref{stochastic_maxwell_density}), this holds true for every $ \phi\in D(M)\cap L^{q+2}(D)^6. $ To show that $ u $ is a weak solution of $ (\operatorname{WSEE}) $ with $ G\equiv 0, $ it remains to show $ N=F(u) $ and $ \widetilde{B}=B(\cdot,u). $ This will be done by adapting a standard argument for stochastic evolution equations with monotone nonlinearies (see \cite{prevot_rockner_a_concise_course_on_stochastic_pde}, proof of Theorem $ 4.2.4 $, page $ 86 $) to our situation. To do this, we just need an It\^o formula for $ \E e^{-Kt}\|u(t)\|_{L^{2}(D)^6}^2, $ although $ Mu(t)\notin L^{2}(D)^6. $ The rest follows the line of the original.
\begin{Lemma}\label{stochastic_maxwell_limit_ito_formula_regularization}
For any $ K>0, $ the It\^o formula
\begin{align*}
 \E &e^{-Kt}\|u(t)\|_{L^{2}(D)^6}^2-\E\|u_0\|_{L^{2}(D)^6}^2\\
 &=\E\int_{0}^{t} e^{-Ks}\langle u(s),-N(s)+J(s)\rangle_{L^2(D)^6}+e^{-Ks}\|\widetilde{B}(s)\|_{L^{2}(U;L^{2}(D)^6)}^2-Ke^{-Ks}\|u(s)\|_{L^{2}(D)^6}^2\operatorname{ds}
\end{align*}
holds true almost surely for all $ t\in [0,T]. $
\end{Lemma}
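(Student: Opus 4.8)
The plan is to deduce the weighted identity from the unweighted Itô formula of Lemma \ref{stochastic_maxwell_ito_formula} and then to attach the factor $e^{-Kt}$ by an elementary product rule. It is worth stressing at the outset why one cannot simply apply Itô's formula to $u$ in $L^{2}(D)^6$: the drift in \eqref{stochastic_maxwell_weak_limit} contains the term $-\langle u(s),M\,\cdot\,\rangle_{L^2(D)^6}$, which is not generated by an $L^{2}(D)^6$-valued process since $Mu(s)\notin L^{2}(D)^6$. Circumventing precisely this obstruction is the role of Lemma \ref{stochastic_maxwell_ito_formula}, so the whole argument rests on verifying its hypotheses for the limit equation.

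First I would observe that \eqref{stochastic_maxwell_weak_limit} has exactly the structure required by Lemma \ref{stochastic_maxwell_ito_formula}, with $X=u$, $X_0=u_0$, $Y=-N+J$ and $Z=\widetilde{B}$. The integrability assumptions are met: $N\in L^{\frac{q+2}{q+1}}(\Omega\times[0,T]\times D)^6$ by the limit c) and $J\in L^{2}(\Omega\times[0,T]\times D)^6$ by [W5], so $Y\in L^{\frac{q+2}{q+1}}(\Omega\times[0,T]\times D)^6+L^{2}(\Omega\times[0,T]\times D)^6$; moreover $\widetilde{B}\in L^{2}(\Omega\times[0,T];L^{2}(U;L^{2}(D)^6))$ by d), and $u_0\in L^{2}(\Omega\times D)^6$. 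The crucial extra regularity $u\in L^{q+2}(\Omega\times[0,T]\times D)^6\cap L^{2}(\Omega\times[0,T]\times D)^6$ demanded by the lemma is exactly what the limits a) and b) provide. Hence Lemma \ref{stochastic_maxwell_ito_formula} applies and yields $u\in L^{2}(\Omega;C(0,T;L^{2}(D)))^6$ together with the unweighted identity
\begin{align*}
\|u(t)\|^2_{L^{2}(D)^6}-\|u_0\|^2_{L^{2}(D)^6}
&=\int_{0}^{t}2\re\langle u(s),-N(s)+J(s)\rangle_{L^2(D)^6}+\|\widetilde{B}(s)\|_{L^{2}(U;L^{2}(D)^6)}^2\operatorname{ds}\\
&\quad+2\int_{0}^{t}\re\langle u(s),\widetilde{B}(s)dW(s)\rangle_{L^2(D)^6}
\end{align*}
almost surely for all $t\in[0,T]$.

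Next I would regard the right-hand side as a continuous real semimartingale $\|u(t)\|^2_{L^{2}(D)^6}=\|u_0\|^2_{L^{2}(D)^6}+A_t+M_t$, with finite-variation part $A_t$ and continuous local-martingale part $M_t=2\int_0^t\re\langle u(s),\widetilde{B}(s)dW(s)\rangle_{L^2(D)^6}$. Since $t\mapsto e^{-Kt}$ is deterministic, continuous and of finite variation, integration by parts (equivalently Itô's formula for $(t,x)\mapsto e^{-Kt}x$, which carries no quadratic-covariation correction) gives pathwise
\begin{align*}
e^{-Kt}\|u(t)\|^2_{L^{2}(D)^6}-\|u_0\|^2_{L^{2}(D)^6}
=-K\int_0^t e^{-Ks}\|u(s)\|^2_{L^{2}(D)^6}\operatorname{ds}+\int_0^t e^{-Ks}\,d\big(\|u(s)\|^2_{L^{2}(D)^6}\big),
\end{align*}
and substituting the unweighted formula for the last integrand produces the pathwise weighted identity. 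Taking expectations, Fubini handles the two drift integrals (by the bounds just verified), and the expectation of the weighted stochastic integral vanishes because it is a genuine martingale rather than merely a local one: by Burkholder--Davis--Gundy and Cauchy--Schwarz,
\begin{align*}
\E\sup_{t\in[0,T]}|M_t|
\lesssim\E\Big(\int_0^T\|u(s)\|^2_{L^{2}(D)^6}\|\widetilde{B}(s)\|^2_{L^{2}(U;L^{2}(D)^6)}\operatorname{ds}\Big)^{1/2}
\lesssim\Big(\E\sup_{s\in[0,T]}\|u(s)\|^2_{L^{2}(D)^6}\Big)^{1/2}\Big(\E\int_0^T\|\widetilde{B}(s)\|^2_{L^{2}(U;L^{2}(D)^6)}\operatorname{ds}\Big)^{1/2},
\end{align*}
which is finite since $u\in L^{2}(\Omega;C(0,T;L^{2}(D)))^6$ and $\widetilde{B}\in L^{2}(\Omega\times[0,T];L^{2}(U;L^{2}(D)^6))$. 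This gives the asserted identity.

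The genuinely delicate part of the statement, namely making the energy balance meaningful although $Mu\notin L^{2}(D)^6$, is entirely absorbed into Lemma \ref{stochastic_maxwell_ito_formula}; once that lemma is available, the only point needing care is upgrading the stochastic integral from a local to a true martingale, which the BDG estimate above settles. The weight $e^{-Kt}$ is a harmless deterministic device, introduced so that in the ensuing monotonicity argument (identifying $N=F(u)$ and $\widetilde{B}=B(\cdot,u)$) the constant $K$ can be chosen large enough to absorb the linear-growth contributions.
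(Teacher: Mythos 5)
Your proof is correct and follows essentially the same route as the paper's, which simply cites Lemma \ref{stochastic_maxwell_ito_formula}, the It\^o product rule for the deterministic factor $e^{-Kt}$, and the vanishing expectation of the stochastic integral; you merely make explicit the verification of the hypotheses of Lemma \ref{stochastic_maxwell_ito_formula} and the Burkholder--Davis--Gundy argument showing the stochastic integral is a true martingale. Note only that your derivation (like the paper's) actually produces the drift term $2e^{-Ks}\re\langle u(s),-N(s)+J(s)\rangle_{L^2(D)^6}$, so the formula as printed in the Lemma is missing the factor $2$ and the real part.
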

\begin{proof}
This formula is immediate by Lemma \ref{stochastic_maxwell_ito_formula}, the It\^o product rule and the fact, that the expectation of a stochastic integral is zero.
\end{proof}
All in all, we showed the following result.
\begin{Proposition}\label{stochastic_maxwell_gzero_unique_weak_solution}
If we assume $ [\operatorname{W1}]-[\operatorname{W5}] $, the equation $ (\operatorname{WSEE}) $ with $ G\equiv 0 $ has a unique weak solution $ u $ in the sense of Definition \ref{stochastic_maxwell_weak_solution_definition}. 
\end{Proposition}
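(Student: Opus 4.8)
The ingredients for existence are already assembled: the Galerkin solutions $u_n$ of \eqref{maxwell_nonlinear_galerkin_approx_equation} exist on all of $[0,T]$ with the uniform bounds of Proposition \ref{stochastic_maxwell_weak_approx_equation_solve}, the weak$^\ast$/weak limits a)--d) furnish a candidate $u$ satisfying the identity \eqref{stochastic_maxwell_weak_limit}, and Lemma \ref{stochastic_maxwell_limit_ito_formula_regularization} supplies the energy identity for this (non-smooth, $Mu\notin L^2(D)^6$) limit. Hence the only substantive point left for \emph{existence} is the identification of the weak limits, namely $N=F(u)$ and $\widetilde B=B(\cdot,u)$; afterwards I must prove uniqueness. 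The plan for the identification is the Minty--Browder monotonicity trick, run along the lines of \cite{prevot_rockner_a_concise_course_on_stochastic_pde}, Theorem $4.2.4$, with Lemma \ref{stochastic_maxwell_limit_ito_formula_regularization} playing the role of the energy identity that is otherwise unavailable here.

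Fix $K\geq C^2$, where $C$ is the Lipschitz constant from [W4], and let $\phi$ be an arbitrary $\F$-adapted test process in $L^{q+2}(\Omega\times[0,T]\times D)^6\cap L^2(\Omega\times[0,T]\times D)^6$. The central quantity is
\[
\Xi_n:=\E\int_0^t e^{-Ks}\Big[2\re\langle F(u_n)-F(\phi),u_n-\phi\rangle_{L^2(D)^6}-\|B(s,u_n)-B(s,\phi)\|_{L^2(U;L^2(D)^6)}^2+K\|u_n-\phi\|_{L^2(D)^6}^2\Big]\operatorname{ds}.
\]
By the monotonicity of $F$ (Lemma \ref{stochastic_maxwell_properties_nonlinearity}) the first integrand is nonnegative, while the Lipschitz bound of [W4] gives $-\|B(s,u_n)-B(s,\phi)\|^2+K\|u_n-\phi\|^2\geq(K-C^2)\|u_n-\phi\|^2\geq 0$; hence $\Xi_n\geq 0$ for every $n$.

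Next I would expand $\Xi_n$ and treat the truly diagonal part $2\re\langle F(u_n),u_n\rangle-\|B(s,u_n)\|^2+K\|u_n\|^2$ by inserting the It\^o formula for $u_n$ from Proposition \ref{stochastic_maxwell_weak_approx_equation_solve}; this replaces that part by $-\E e^{-Kt}\|u_n(t)\|^2+\E\|P_nu_0\|^2+\E\int_0^t e^{-Ks}2\re\langle u_n,J\rangle\operatorname{ds}$ up to the harmless nonnegative error $\E\int_0^t e^{-Ks}\|(I-P_n)B(s,u_n)\|^2\operatorname{ds}$, which only improves the sign. All remaining terms of $\Xi_n$ are affine in $(u_n,F(u_n),B(\cdot,u_n))$ and converge by a)--d). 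For the quadratic boundary term I would use weak lower semicontinuity, $\liminf_n\E e^{-Kt}\|u_n(t)\|^2\geq\E e^{-Kt}\|u(t)\|^2$, and then substitute the value of $\E e^{-Kt}\|u(t)\|^2$ from Lemma \ref{stochastic_maxwell_limit_ito_formula_regularization}. Passing to $\limsup_n\Xi_n\geq 0$ then yields, after the cancellations, the Minty inequality
\[
\E\int_0^t e^{-Ks}\Big[2\re\langle N-F(\phi),u-\phi\rangle_{L^2(D)^6}-\|\widetilde B-B(s,\phi)\|_{L^2(U;L^2(D)^6)}^2+K\|u-\phi\|_{L^2(D)^6}^2\Big]\operatorname{ds}\geq 0
\]
for every admissible $\phi$. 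Choosing $\phi=u$ forces $\widetilde B=B(\cdot,u)$; inserting then $\phi=u-\lambda w$ for $\lambda>0$ and arbitrary adapted $w$, dividing by $\lambda$ and letting $\lambda\downarrow 0$ (using the continuity of $F$ from Lemma \ref{stochastic_maxwell_nonlinearity_properties} and the Lipschitz bound to kill the $B$-term) gives $\re\langle N-F(u),w\rangle=0$ for all $w$, i.e. $N=F(u)$. Thus $u$ is a weak solution.

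For uniqueness, given two weak solutions $u,v$ their difference meets the hypotheses of Lemma \ref{stochastic_maxwell_ito_formula} with $Y=-(F(u)-F(v))$ and $Z=B(\cdot,u)-B(\cdot,v)$, so
\[
\E\|u(t)-v(t)\|_{L^2(D)^6}^2=\E\int_0^t -2\re\langle F(u)-F(v),u-v\rangle_{L^2(D)^6}+\|B(s,u)-B(s,v)\|_{L^2(U;L^2(D)^6)}^2\operatorname{ds}.
\]
Monotonicity makes the first integrand nonpositive and [W4] bounds the second by $C^2\|u-v\|^2$, so Gronwall gives $\E\|u(t)-v(t)\|^2=0$ for all $t$, and the pathwise continuity in \eqref{stochastic_maxwell_intro_weak_regularity} upgrades this to pathwise uniqueness. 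I expect the \emph{main obstacle} to be the limit passage in the nonlinear diagonal term: since $F(u_n)\rightharpoonup N$ and $u_n\rightharpoonup u$ only weakly, $\langle F(u_n),u_n\rangle$ cannot be passed to the limit termwise, and its $\limsup$ must be controlled through the energy identity. This is exactly where Lemma \ref{stochastic_maxwell_limit_ito_formula_regularization} is indispensable, and where the interplay of weak lower semicontinuity of the $L^2$-norm with the Lipschitz-plus-large-$K$ handling of the noise (which avoids needing strong convergence of $B(\cdot,u_n)$) has to be arranged with care.
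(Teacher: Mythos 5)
Your proposal is correct and follows essentially the same route as the paper: the paper also constructs the solution as the weak limit of the Galerkin approximations and identifies $N=F(u)$ and $\widetilde B=B(\cdot,u)$ by the weighted Minty--Browder monotonicity argument of Pr\'evot--R\"ockner (Theorem $4.2.4$ there), with Lemma \ref{stochastic_maxwell_limit_ito_formula_regularization} supplying the It\^o formula for $\E e^{-Kt}\|u(t)\|^2_{L^2(D)^6}$ that replaces the energy identity used in the reference; the paper merely leaves these steps, as well as the standard Gronwall uniqueness argument, to the cited source, whereas you write them out.
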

Finally, we add a nontrivial the retarded material law $ G $ with a perturbation argument.
\begin{Theorem}\label{stochastic_maxwell_unique_weak_solution}
If we assume $ [\operatorname{W1}]-[\operatorname{W5}] $, the equation $ (\operatorname{WSEE}) $ has a unique weak solution $ u $ in the sense of Definition \ref{stochastic_maxwell_weak_solution_definition}. 
\end{Theorem}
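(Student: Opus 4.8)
The plan is to treat the retarded term $G\ast u$ as a perturbation and solve the full equation by Banach's fixed point theorem, using Proposition \ref{stochastic_maxwell_gzero_unique_weak_solution} as a black box. First I would fix the Banach space $\mathcal{Y}:=L^{2}(\Omega;C(0,T;L^{2}(D)))^6$ and define a map $\Phi$ on $\mathcal{Y}$ as follows: given $v\in\mathcal{Y}$, let $\Phi(v)$ be the unique weak solution, granted by Proposition \ref{stochastic_maxwell_gzero_unique_weak_solution}, of the $G\equiv 0$ problem in which the source $J$ is replaced by $J+G\ast v$. To see that $\Phi$ is well defined I would check that $J+G\ast v$ again satisfies $[\operatorname{W5}]$: the process $(G\ast v)(t)=\int_0^t G(t-s)v(s)\operatorname{ds}$ is $\F$-adapted and strongly measurable because $G$ is adapted and the convolution only sees the past, and the bound $\sup_{t\in[0,T]}\|(G\ast v)(t)\|_{L^2(D)^6}\leq \big(\esssup_\omega\int_0^T\|G(t)\|_{B(L^2(D)^6)}\operatorname{dt}\big)\sup_{t\in[0,T]}\|v(t)\|_{L^2(D)^6}$ from $[\operatorname{W3}]$ shows $G\ast v\in\mathcal{Y}\subset L^2(\Omega\times[0,T]\times D)^6$. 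In particular $\Phi$ maps $\mathcal{Y}$ into the smaller weak solution space, so any fixed point automatically has the regularity of Definition \ref{stochastic_maxwell_weak_solution_definition} and is a weak solution of $(\operatorname{WSEE})$; conversely, by the uniqueness part of Proposition \ref{stochastic_maxwell_gzero_unique_weak_solution}, every weak solution $u$ is a fixed point of $\Phi$. Hence existence and uniqueness both reduce to the existence of a unique fixed point.

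For the contraction estimate I would take $v_1,v_2\in\mathcal{Y}$, set $u_i=\Phi(v_i)$ and $w=u_1-u_2$, and note that $w$ solves, in the weak sense, the equation with data $X_0=0$, drift $-(F(u_1)-F(u_2))+G\ast(v_1-v_2)$ and diffusion $B(\cdot,u_1)-B(\cdot,u_2)$. Since $w$ lies in $L^{q+2}\cap L^2$ over $\Omega\times[0,T]\times D$, I may apply the It\^o formula of Lemma \ref{stochastic_maxwell_ito_formula} to $\|w(t)\|_{L^2(D)^6}^2$. In the resulting identity the nonlinear term contributes $-2\re\langle F(u_1)-F(u_2),u_1-u_2\rangle_{L^2(D)^6}\leq 0$ by the monotonicity in Lemma \ref{stochastic_maxwell_properties_nonlinearity} and can be dropped; the diffusion term is controlled by $C^2\|w\|_{L^2(D)^6}^2$ via the Lipschitz assumption $[\operatorname{W4}]$; and the cross term is estimated by $2\|w\|_{L^2(D)^6}\|(G\ast(v_1-v_2))\|_{L^2(D)^6}$. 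Taking the supremum in time, then expectations, and handling the martingale part with the Burkholder--Davis--Gundy inequality exactly as in the proof of Proposition \ref{stochastic_maxwell_weak_approx_equation_solve}, a Young/Cauchy--Schwarz bound on the convolution (using $[\operatorname{W3}]$ and Fubini) followed by Gronwall's lemma yields $\|\Phi(v_1)-\Phi(v_2)\|_{\mathcal{Y}}^2\leq C\,T\,e^{CT}\,\|v_1-v_2\|_{\mathcal{Y}}^2$ with a constant $C$ depending only on the data.

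This is a genuine contraction only for $T$ small, and the main obstacle is upgrading to an arbitrary finite horizon. This is delicate precisely because the memory kernel couples different time intervals: when restarting on $[T_0,2T_0]$ the convolution still integrates over the already-solved part $[0,T_0]$. I would resolve this either by introducing an exponentially weighted norm $\|v\|_\beta^2=\E\sup_{t\in[0,T]}e^{-\beta t}\|v(t)\|_{L^2(D)^6}^2$ and choosing $\beta$ large enough that the convolution contribution has norm strictly below one on all of $[0,T]$, or by iterating the short-time result on successive intervals of a fixed length, treating $\int_0^{T_0}G(t-s)u(s)\operatorname{ds}$ as a known, already-determined forcing so that the contraction constant depends only on the interval length and not on its position. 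Either way Banach's fixed point theorem produces a unique fixed point $u\in\mathcal{Y}$, which by the first paragraph is the unique weak solution of $(\operatorname{WSEE})$, completing the proof.
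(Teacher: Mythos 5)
Your proposal is correct and follows essentially the same route as the paper: the paper also defines the fixed-point map via Proposition \ref{stochastic_maxwell_gzero_unique_weak_solution} with source $J+G\ast v$, proves the contraction estimate on $L^{2}(\Omega;C(0,T_0;L^{2}(D)))^6$ using Lemma \ref{stochastic_maxwell_ito_formula}, the monotonicity of $F$, the Lipschitz property of $B$, Burkholder--Davis--Gundy and Gronwall, and then globalizes exactly by your second option, iterating on successive intervals of fixed length $T_0$ while treating the convolution over the already-solved past as a known forcing that cancels in differences.
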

\begin{proof}
Let $ T_0\in (0,T]. $ By Proposition \ref{stochastic_maxwell_gzero_unique_weak_solution}
\begin{equation*}
\begin{cases}
du(t)&=\big[Mu(t)-F(u(t))+(G\ast v)(t)+J(t)\big]\operatorname{dt}+B(t,u(t))dW_t,\\
u(0)&=u_0
\end{cases}
\end{equation*}
has for every $ v\in L^{2}(\Omega;C(0,T_0;L^{2}(D)))^6 $ a unique solution $ u=:Kv\in L^{2}(\Omega;C(0,T_0;L^{2}(D)^6)).  $ Indeed, by $ [\operatorname{W3}]$, 
$$  t\mapsto \int_{0}^{t}G(t-s)u(s)\operatorname{ds}\in L^{2}(\Omega\times [0,T]\times D)^6 $$ 
and thus $ G\ast v $ satisfies $ [\operatorname{W5}]. $ In the following, we will show that $ K $ is a contraction in $ X:= L^{2}(\Omega;C(0,T_0;L^{2}(D)))^6 $, if we choose $ T_0>0 $ small enough. For given $ v,w\in X, $ we calculate with Lemma \ref{stochastic_maxwell_ito_formula}
\begin{align*}
\|Kv(s)-Kw(s)\|_{L^{2}(D)^6}^2
=&\int_{0}^{s}2\re\langle Kv(r)-Kw(r),F(Kw(r))-F(Kv(r))+(G\ast (v-w))(r)\rangle_{L^2(D)^6}\\
&+ \|B(r,Kv(r))-B(r,Kw(r))\|_{L^{2}(U;L^{2}(D)^6)}^2\operatorname{dr}\\
&+2\int_{0}^{t}\re\big\langle Kv(r)-Kw(r),B(r,Kv(r))-B(r,Kw(r))dW(r)\big\rangle_{L^2(D)^6}.
\end{align*}
In the following estimates, we take the supremum over $ [0,t] $ for $ t\in [0,T_0] $ and afterwards the expectation value. We now estimate the occurring quantities term by term. 
\begin{align*}
&\int_{0}^{s}\re\langle Kv(r)-Kw(r),(G\ast (v-w))(r)\rangle_{L^2(D)^6}ds\\
&\leq \int_{0}^{s}\frac{1}{2}\|Kv(r)-Kw(r)\|_{L^{2}(D)^6}^2+\frac{1}{2}\big\|\int_{0}^{r}G(r-\lambda)(v(\lambda)-w(\lambda))\operatorname{d\lambda}\big\|_{L^{2}(D)^6}^2\operatorname{dr}\\
&\leq  \int_{0}^{s}\frac{1}{2}\sup_{\lambda\in [0,r]}\|Kv(\lambda)-Kw(\lambda)\|_{L^{2}(D)^6}^2\operatorname{dr}+\frac{T_0\|G\|^2_{L^{1}(0,T;B(L^{2}(D)^6))}}{2} \sup_{\lambda\in [0,T_0]}\|v(\lambda)-w(\lambda)\|_{L^{2}(D)^6}^2
\end{align*}
for all $ s\in [0,T_0]. $ We can drop the contribution of $ F, $ as
\[ \langle Kv(r)-Kw(r),F(Kw(s))-F(Kv(s))\rangle_{L^2(D)^6}\leq -\alpha\|Kv(r)-Kw(r)\|_{L^{q+2}(D)^6}^{q+2} \]
for all $ s\in[0,T_0] $ and some $ \alpha>0 $ by Lemma \ref{stochastic_maxwell_properties_nonlinearity_monotonicity}. Moreover, by $ [\operatorname{W4}], $ we have
\begin{align*}
\int_{0}^{t}\|B(s,Kv(s))-B(s,Kw(s))\|_{L^{2}(U;L^{2}(D)^6)}^2\operatorname{ds}\leq C^2\int_{0}^{t}\sup_{r\in[0,s]}\|Kv(r)-Kw(r)\|_{L^{2}(D)^6}^2\operatorname{ds}.
\end{align*}
Last but not least, the Burkholder-Davies-Gundy inequality and $ [\operatorname{W4}] $ yield
\begin{align*}
\E&\sup_{s\in [0,t]}\Big|\int_{0}^{s}\re\big\langle Kv(r)-Kw(r),B(r,Kv(r))-B(r,Kw(r))dW(r)\big\rangle_{L^2(D)^6}\Big|\\
&\leq C\E\Big(\int_{0}^{t}\big\|\langle Kv(r)-Kw(r),B(r,Kv(r))-B(r,Kw(r))\rangle_{L^{2}(D)^6)}\big\|_{L^{2}(U)}^2\operatorname{dr}\Big)^{1/2}\\
&\leq C\E\sup_{s\in [0,t]}\|Kv(s)-Kw(s)\|_{L^{2}(D)^6}\Big(\int_{0}^{t}\|B(r,Kv(r))-B(r,Kw(r))\|^2_{L^{2}(U;L^{2}(D)^6)}\operatorname{dr} \Big)^{1/2}\\
&\leq \frac{1}{4}\E\sup_{s\in [0,t]}\|Kv(s)-Kw(s)\|_{L^{2}(D)^6}^2+\widetilde{C}^2\int_{0}^{t}\E\sup_{r\in[0,s]}\|Kv(r)-Kw(r)\|^2_{L^{2}(D)^6}\operatorname{ds}.
\end{align*}
All in all, we derived
\begin{align*}
\E\sup_{s\in [0,t]}\|Kv(s)-Kw(s)\|_{L^{2}(D)^6}^2
\leq& \int_{0}^{t}2(1+2\widetilde{C}^2+C^2)\E\sup_{\lambda\in [0,r]}\|Kv(\lambda)-Kw(\lambda)\|_{L^{2}(D)^6}^2\operatorname{dr}\\
&+\frac{T_0\|G\|_{L^{\infty}(\Omega;L^{1}(0,T;B(L^{2}(D)^6)))}}{2} \E\sup_{\lambda\in [0,T_0]}\|v(\lambda)-w(\lambda)\|_{L^{2}(D)^6}^2
\end{align*}
for every $ t\in [0,T_0]. $ Hence, Gronwall implies
\begin{align*}
\E&\sup_{s\in [0,t]}\|Kv(s)-Kw(s)\|_{L^{2}(D)^6}^2\\
&\leq \frac{T_0\|G\|_{L^{\infty}(\Omega;L^{1}(0,T;B(L^{2}(D)^6)))}}{2} \Big(\E\sup_{\lambda\in [0,T_0]}\|v(\lambda)-w(\lambda)\|_{L^{2}(D)^6}^2 \Big)e^{2(1+2\widetilde{C}^2+C^2)T_0}.
\end{align*}
Now, we choose $ T_0>0 $ small enough to ensure that $ K $ is a contraction. In particular, by Banach's fixed point theorem, there exists $ u_1\in L^{2}(\Omega;C(0,T_0;L^{2}(D)^6)) $ solving $ \operatorname{(WSEE)} $ on $ [0,T_0] $ and from $ Ku_1=u_1, $ we deduce $ u_1\in L^{q+2}(\Omega\times[0,T_0]\times D)^6. $ Clearly, by continuity in time, we have $ u_1(T_0)\in L^{2}(\Omega\times D)^6 $ and $ \omega\mapsto u_1(\omega,T_0) $ is $ \mathcal{F}_{T_0} $-measurable.

Next, given $ v\in L^{2}(\Omega;C(T_0,2T_0;L^{2}(D)^6)) $, we consider the equation
\begin{equation*}
\begin{cases}
dy=\big[My-F(y)+\int_{0}^{T_0}G(\cdot-s)u_1(s)\operatorname{ds}+\int_{T_0}^{\cdot}G(\cdot-s)v(s)\operatorname{ds}+J\big]\operatorname{dt}+B(\cdot,y)dW_t,\\
y(T_0)=u_1(T_0) 
\end{cases}
\end{equation*}
for $ t\in[T_0,2T_0] $. By Proposition \ref{stochastic_maxwell_gzero_unique_weak_solution}, we have a unique solution $ y:=K_2v. $ This defines an operator $ K_2:L^{2}(\Omega;C(T_0,2T_0;L^{2}(D)))^6\to L^{2}(\Omega;C(T_0,2T_0;L^{2}(D)))^6. $ However, $ K_2v-Kw_2 $ can be estimated in the very same way as above, since the additional term $ \int_{0}^{T_0}G(\cdot-s)u_1(s)\operatorname{ds} $ vanishes in this difference. As a consequence, $ K_2 $ is a contraction on $ L^{2}(\Omega;C(T_0,2T_0;L^{2}(D)))^6 $ and has a unique fixed point $ u_2. $ Inductively, we construct, $ u_n\in L^{2}(\Omega;C((n-1)T_0,nT_0;L^{2}(D))^6 $  solving 
\begin{equation*}
\begin{cases}
dy=\big[My-F(y)+\int_{0}^{(n-1)T_0}G(\cdot-s)u_1(s)\operatorname{ds}+\int_{(n-1)T_0}^{\cdot}G(\cdot-s)y(s)\operatorname{ds}+J\big]\operatorname{dt}+B(\cdot,y)dW_t,\\
y((n-1)T_0)=u_{n-1}((n-1)T_0) 
\end{cases}
\end{equation*}
and stop, when $ nT_0\geq T. $ Finally, the process $ u:=\sum_{n=1}^{\lfloor \tfrac{T}{T_0}\rfloor+1}u_n\ind_{[(n-1)T_0,nT_0)} $ solves the $ (\operatorname{WSEE}) $ on $ [0,T] $ and satisfies
\[ u\in L^{2}(\Omega;C(0,T;L^{2}(D)))^6\cap L^{q+2}(\Omega\times[0,T]\times D)^6. \]
By construction, $ u $ is unique on every interval $ [(n-1)T_0,nT_0), $ which implies uniqueness on $ [0,T]. $
\end{proof}
    \section{Existence and uniqueness of a strong solution}
In this section, we will discuss the following stochastic Maxwell equation 
\begin{equation*}
	(\operatorname{MSEE})\begin{cases}
		du&=\big[Mu-|u|^{q}u+G\ast u+J\big]\operatorname{dt}+\sum_{n=1}^{N}\big[b_n+iB_nu\big] d\beta_n(t),\\
		u(0)&=u_0.
	\end{cases}
\end{equation*}
on $ L^{2}(D)^6 $ with a monotone polynomial nonlinearity and a retarded material law and we derive existence and uniqueness of a strong solution in the sense of partial differential equations. For sake of readability, we sometimes write $ F(u):=|u|^qu. $ Before we start, we explain our solution concept.
\begin{Definition}\label{stochastic_maxwell_defintion_strong_solution}
A weak solution $ u $ is called strong solution of $ (\operatorname{MSEE}) $ if it additionally satisfies
\[ Mu\in L^{2}(\Omega;L^{\infty}(0,T;L^{2}(D)))^6+L^{\frac{q+2}{q+1}}(\Omega\times[0,T]\times D )^6. \]
\end{Definition}
Note, that in case of a bounded domain $ D\subset\R^{3} $, this integrability property reduces $ Mu\in L^{\frac{q+2}{q+1}}(\Omega\times[0,T]\times D )^6. $ We make the following assumptions.
\begin{itemize}
		\item [\lbrack\text{M1}\rbrack] Let $ q\in (1,2] $ and $ D\subset\R^{3} $ be a bounded $ C^{1} $- domain or $ D=\R^{3} $.
	\item[\lbrack\text{M2}\rbrack] Let $ u_0$ be strongly $ \mathcal{F}_0 $-measurable with $$ \E\|Mu_0\|_{L^{2}(D)^6}^2+\E\|u_0\|^{2(q+1)}_{L^{2(q+1)}(D)^6}<\infty. $$
	\item[\lbrack\text{M3}\rbrack] Let $ G\in L^{\infty}(\Omega;W^{1,1}(0,T;B(L^{2}(D)^6))), $ such that $ \omega\mapsto G(t)x $ is for all $ x\in L^{2}(D)^6 $ and all $ t\in [0,T] $ strongly $ \mathcal{F}_t $-measurable. 
	\item[\lbrack\text{M4}\rbrack] Let $ J:L^{2}(\Omega; W^{1,2}(0,T;L^{2}(D)))^6 $ be $ \F $-adapted. 
	\item[\lbrack\text{M5}\rbrack] Let $ b_j:L^{2}(\Omega; W^{1,2}(0,T;L^{2}(D)))^6 $, $ j=1,\dots, N $ be $ \F $-adapted. If $ q\in (1,2), $ we additionally assume  $ b_j\in L^{\frac{2(q+2)}{2-q}}(\Omega\times [0,T]\times D)^6,  $ whereas we need $ b_j\in L^{\infty}(\Omega\times [0,T]\times D)^6 $, if $ q=2. $ Moreover, we assume 
	\[ P_n\big(b_je^{-i\sum_{l=1}^{N}B_l\beta_l}\big)=b_je^{-i\sum_{l=1}^{N}B_l\beta_l} \]
	for $ n\in\N $ large enough in the case $ q=2. $
	\item[\lbrack\text{M6}\rbrack] Let $ B_j\in W^{1,\infty}(D) $ for $ j=1,\dots,N. $
\end{itemize}

At first, we assume $ G\equiv 0 $ and solve $ (\operatorname{MSEE}) $ without retarded material law as in the last section. The reason for this simplification is that we make use of the monotone structure of the rest of the equation. As described in the introduction, we failed to derive an a priori estimate for $ Mu $ directly with It\^o's formula and Gronwall, since we could not control the terms $ \|F''(u)(B_ju(s),B_ju(s)\|_{L^{2}(D)^6} $ and $  \|F'(u)(B_ju(s)\|_{L^{2}(D)^6}.  $ Hence, we start with a rescaling transformation, such that the multiplicative noise vanishes. We end up with
\begin{equation*}
(\operatorname{TSEE})\begin{cases}
dy(t)&=[My(t)-|y(t)|^{q}y(t)+A(t)y(t)+\widetilde{J}(t)]\operatorname{dt}+\sum_{i=1}^{N}\widetilde{b}_i(t)\ d\beta_i(t),\\
u(0)&=u_0,
\end{cases}
\end{equation*}
where $ A(t) $, $ \tilde{J} $ and the new additive noise $ \sum_{j=1}^{N}\tilde{b}_j\ d\beta_j $ are given by 
\begin{align*}
A(t,x)y(t,x):&=\tfrac{1}{2}\sum_{j=1}^{N}B_j(x)^{2}y(t,x)+\sum_{j=1}^{N}i\beta_j(t)\binom{\nabla B_j(x)\times y_{2}}{-\nabla B_j(x)\times y_{1}},\\
\tilde{J}(t,x):&=\sum_{j=1}^{N}\big(-ib_j(t,x)B_j(x)+J(t,x)\big)e^{-i\sum_{n=1}^{N}B_n(x)\beta_n(t)},  \\
\tilde{b}_i(t,x):&=b_i(t,x)e^{-i\sum_{j=1}^{N}B_j(x)\beta_j(t)} 
\end{align*}
for $ t\in [0,T],$ $ x\in D $ and $ i=1,\dots,N. $ First, we show that a solution of $ (\operatorname{TSEE}) $ can be transformed to a solution of $ (\operatorname{MSEE}). $
\begin{Proposition}\label{stochastic_maxwell_transformation}
	A stochastic process $ u:\Omega\times [0,T]\to L^{2}(D) $ with almost surely continuous paths
	 is a strong solution of $ (\operatorname{MSEE}) $ with $ G\equiv 0 $ if and only if the process $ y(t):=\expbt u(t) $ has almost surely continuous paths, satisfies 
	\begin{itemize}
\item [i)] $ \E\sup_{t\in [0,T]}\|y(t)\|^2_{L^2(D)^6}+\E\int_{0}^{T}\int_{D}|y(t,x)|^{q+2}\operatorname{dx}\operatorname{dt}<\infty $
\item[ii)] $My+i\sum_{j=1}^{N}\beta_j\binom{\nabla B_j\times y_{2}}{-\nabla B_j\times y_{1}}\in L^{\frac{q+2}{q+1}}(\Omega\times[0,T]\times D)^6+L^{2}(\Omega;L^{\infty}(0,T;L^{2}(D)))^6 $
	\end{itemize}
	and solves the equation $ (\operatorname{TSEE}). $
	\begin{proof}
		We assume that $ u $ is a solution of $ (\operatorname{MSEE}) $ in the sense of Definition \ref{stochastic_maxwell_defintion_strong_solution} with the described regularity properties. At first, we calculate $ d(e^{i\sum_{j=1}^{N}B_j\beta_n(t) }) $ with It\^o's formula and obtain
		\begin{align*}
		e^{i\sum_{j=1}^{N}B_j\beta_j(t)}-1=\sum_{j=1}^{N}\int_{0}^{t}iB_j\expbb d \beta_n(s)-\tfrac{1}{2}\sum_{j=1}^{N}\int_{0}^{t}B_j^{2}\expbb ds.
		\end{align*}
		Therefore, It\^o's product rule yields
		\begin{align}\label{maxwell_transformation_calculation_1}
		\langle y&(t),x'\rangle_{L^2(D)^6}-\langle u_0,x'\rangle_{L^2(D)^6}=\langle u(t),\expbbt x'\rangle_{L^2(D)^6}-\langle u_0,x'\rangle_{L^2(D)^6}\notag\\
		=&\sum_{j=1}^{N}\int_{0}^{t}-\langle u(s),\tfrac{1}{2}B_j^{2}\expbb x'\rangle_{L^2(D)^6}+\langle b_j(s)+iB_ju(s),iB_j\expbb x'\rangle_{L^2(D)^6} \operatorname{ds}\notag\\
		&+\int_{0}^{t} \langle Mu(s)-|u(s)|^{q}u(s)+J(s),\expbb x'\rangle_{L^2(D)^6}\operatorname{ds}\notag\\
		&+\sum_{j=1}^{N}\int_{0}^{t}\langle u(s),iB_j\expbb x'\rangle_{L^2(D)^6} +\langle b_j(s)+iB_ju(s),\expbb x'\rangle_{L^2(D)^6} d \beta_n(s)
		\end{align}
		almost surely for every $ x'\in C_c^{\infty}(D) $ and for every $ t\in [0,T] $. As a consequence, we have
		\begin{align*}
		y(t)-u_0=&\int_{0}^{t}\expb M\big(\expbb y(s)\big)-|y(s)|^{q}y(s)\operatorname{ds}\\
		&+\int_{0}^{t} \expb J+\sum_{j=1}^{N} \tfrac{1}{2} B_j^2y(s)-ib_j(s)B_j(s)\expb\operatorname{ds}\\
		&+\sum_{n=1}^{N}\int_{0}^{t}b_n(s)\expb d \beta_n(s)
		\end{align*}
		almost surely for every $ t\in [0,T] $. Here, we used that $ u\in L^{q+2}(\Omega\times [0,T]\times D)^6 $ implies $ |y|^{q}y\in L^{q+2}(\Omega\times [0,T]\times D)^6 . $  
Since we want to derive an equation for $ y, $ we have to commute the exponential function with $ M. $ Therefore we compute
\begin{align*}
My(t)&=M(\expbt u(t))\\
&=\binom{\curl (\expbt u_2(t))}{-\curl (\expbt u_1(t))}\\
&=\sum_{j=1}^{N}-i\beta_j(t)\binom{(\nabla B_j)\expbt\times u_2(t)}{-(\nabla B_j)\expbt\times u_1(t)}+\binom{\expbt \curl(u_2(t))}{-\expbt \curl(u_1(t))}\\
&=\sum_{j=1}^{N}i\beta_j(t)\binom{-\nabla B_j\times y_2(t)}{\nabla B_j\times y_1(t)}+\expbt Mu(t).
\end{align*}
		Inserting this into \eqref{maxwell_transformation_calculation_1} finally proves that $ y $ solves $ (\operatorname{TSEE}). $ The other direction follows the same lines. 
	\end{proof}
\end{Proposition}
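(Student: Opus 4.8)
The plan is to prove the equivalence by a direct It\^o computation that passes between the equation for $ u $ and the equation for $ y(t)=\expbt u(t) $, exploiting that the scalar factor $ \expbt $ has modulus one and multiplies all six components. In particular $ |y(t,x)|=|u(t,x)| $ pointwise, so every $ L^{p} $-norm of $ y $ over $ \Omega\times[0,T]\times D $ agrees with that of $ u $, and $ y $ has almost surely continuous paths if and only if $ u $ does; thus condition $ i) $ is exactly the weak-solution regularity of $ u $. This reduces the statement to showing that $ y $ solves $ (\operatorname{TSEE}) $ precisely when $ u $ solves $ (\operatorname{MSEE}) $ with $ G\equiv 0 $, together with the equivalence of the two strong-regularity conditions.

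First I would compute the It\^o differential of the scalar factor: since the $ B_l $ are time-independent and the $ \beta_l $ are independent Brownian motions, It\^o's formula gives $ d\big(\expbbt\big)=\sum_{l}iB_l\expbbt\,d\beta_l-\tfrac12\sum_l B_l^{2}\expbbt\,dt $, the quadratic-variation term producing the correction $ -\tfrac12\sum_l B_l^2 $. Testing the equation for $ u $ against $ \expbbt x' $ with $ x'\in C_c^{\infty}(D)^6 $ and applying the It\^o product rule then yields a weak formulation for $ \langle y(t),x'\rangle_{L^2(D)^6} $. Collecting the drift, one finds the terms $ \expbt Mu $, the transformed nonlinearity $ \expbt|u|^qu=|y|^qy $ (again using $ |u|=|y| $), the instantaneous correction $ \tfrac12\sum_l B_l^2 y $, and the mixed term $ -i\sum_l B_l b_l\expbt $ coming from the cross-variation of the two martingale parts. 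In the diffusion the decisive cancellation occurs: the multiplicative contribution $ iB_n y $ produced by transforming the noise of $ u $ exactly cancels the $ -iB_n y $ coming from differentiating the exponential, leaving only the additive noise $ \sum_n\widetilde{b}_n\,d\beta_n $.

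The step I expect to be the main obstacle is commuting the spatially dependent factor $ \expbt $ with the Maxwell operator $ M $, all the more so because $ u $ need not lie in $ D(M) $. Using the product rule for $ \curl $, one has $ \curl(\expbt u_i)=\expbt\curl u_i-i\sum_l\beta_l(t)(\nabla B_l)\times(\expbt u_i) $, so that
\begin{align*}
My=\expbt Mu+\sum_{l}i\beta_l(t)\binom{-\nabla B_l\times y_2}{\nabla B_l\times y_1},\qquad\text{equivalently}\qquad \expbt Mu=My+\sum_{l}i\beta_l(t)\binom{\nabla B_l\times y_2}{-\nabla B_l\times y_1}.
\end{align*}
This is exactly what turns the first-order commutator into the gradient part of $ A(t) $, so that the drift assembles into $ My-|y|^qy+A(t)y+\widetilde{J} $; at the same time it identifies the quantity in $ ii) $ with $ \expbt Mu $, and since $ |\expbt Mu|=|Mu| $ and multiplication by $ \expbt $ is an isometry of both $ L^{\frac{q+2}{q+1}} $ and $ L^{2}(\Omega;L^{\infty}(0,T;L^{2})) $, condition $ ii) $ holds if and only if $ Mu $ satisfies the strong-solution regularity of Definition \ref{stochastic_maxwell_defintion_strong_solution}. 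To make the commutation rigorous for a non-smooth $ u $ I would carry it out distributionally, justifying the $ \curl $ product rule through Lemma \ref{maxwell_stochastic_lemma_distributional_maxwell_operator} and testing against smooth fields; the hypothesis $ B_l\in W^{1,\infty}(D) $ from $ [\operatorname{M6}] $ keeps the new first-order terms in the required spaces. Inserting this identity into the weak formulation shows that $ y $ solves $ (\operatorname{TSEE}) $, and the converse follows along the very same lines by multiplying $ y $ with $ \expbbt $ and repeating the computation, with the roles of the two exponentials and of the additive and multiplicative noises interchanged.
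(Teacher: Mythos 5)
Your proposal is correct and follows essentially the same route as the paper: It\^o's formula for the unimodular factor $\expbbt$, the product rule giving the cancellation of the multiplicative noise against the derivative of the exponential, and the $\curl$ product rule turning the commutator $[M,\expbt]$ into the gradient part of $A(t)$. The only additions beyond the paper's argument are welcome clarifications rather than a different method, namely the explicit observation that multiplication by $\expbt$ is an isometry (so that condition $ii)$ matches the strong regularity of $Mu$) and the remark that the commutation should be justified distributionally via Lemma \ref{maxwell_stochastic_lemma_distributional_maxwell_operator}.
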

We solve $ (\operatorname{TSEE}) $ by a refined Galerkin approximation of the skew-adjoint operator $ M $. To do this, we truncated the equation with the spectral multipliers $ P_n $ and $ S_n $, we defined in section $ 3. $ We study 
\begin{equation}\label{maxwell_nonlinear_galerkin_approx_equation_2}
\begin{cases}
dy_n(t)&=[P_nMy_n(t)-P_nF(y_n(t))+P_nA(t)y_n(t)+P_n\widetilde{J}(t)]\operatorname{dt}+\sum_{i=1}^{N}S_{n-1}\widetilde{b}_i(t)\ d\beta_i(t),\\
y_n(0)&=S_{n-1}u_0.
\end{cases}
\end{equation}
In the next Proposition, we derive a priori estimates for the solution exploiting the structure of the equation. 

\begin{Proposition}\label{stochastic_maxwell_transformation_approx_equation_solve}
	The truncated equation \eqref{maxwell_nonlinear_galerkin_approx_equation_2} has for every $ n\in\N $ a unique, pathwise continuous solution $ y_n:\Omega\times[0,T]\to L^{2}(D)^6$, that additionally satisfies 
	\begin{align}\label{stochastic_maxwell_transformation_approx_equation_solve_estimate}
	\E \sup\limits_{t\in[0,T]}\|y_n(t)\|^2_{L^{2}(D)^6}+&\E \int_{0}^{T}\|y_n(t)\|_{L^{q+2}(D)^6}^{q+2}dt\notag\\
	&\leq C\Big(\|\widetilde{J}\|_{L^{2}(\Omega\times[0,T]\times D)}^{2}+\sum_{j=1}^{N}\|\widetilde{b_j}\|_{L^{2}(\Omega\times[0,T]\times D)}^{2}+\|u_0\|_{L^{2}(D)}^{2}\Big)
	\end{align}
	for some constant $ C>0 $ only depending on $\sup_{j=1,\dots,N}\|B_j\|_{L^{\infty}(D)}, $ but not on $ n\in\N. $ 
\end{Proposition}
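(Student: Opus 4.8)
The plan is to read \eqref{maxwell_nonlinear_galerkin_approx_equation_2} as a finite-dimensional stochastic ordinary differential equation on $R(P_n)\subset D(M)$, driven by the $N$-dimensional Brownian motion $(\beta_1,\dots,\beta_N)$, and then to exploit the algebraic structure of the equation through It\^o's formula to obtain an a priori bound that simultaneously rules out blow-up. First I would note that $P_nM$ is bounded on the finite-dimensional space $R(P_n)$, that $y\mapsto P_nF(y)$ is locally Lipschitz by Lemma \ref{stochastic_maxwell_nonlinearity_properties}, that $y\mapsto P_nA(t)y$ is linear with pathwise continuous, adapted coefficients (bounded in terms of $\sup_j\|B_j\|_{L^{\infty}(D)}$, $\sup_j\|\nabla B_j\|_{L^{\infty}(D)}$ and $|\beta_j(t)|$ via \lbrack\text{M6}\rbrack), and that the diffusion term $\sum_i S_{n-1}\widetilde{b}_i$ is additive. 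Standard theory then yields a unique, pathwise continuous solution up to a stopping time $\tau_n$ together with a blow-up alternative analogous to \eqref{maxwell_nonlinear_galerkin_approx_equation_blow_up}.

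The heart of the argument is It\^o's formula for $\|y_n(t)\|^2_{L^2(D)^6}$. Since $y_n\in R(P_n)$ we have $P_ny_n=y_n$, so by the self-adjointness and commutation properties of $P_n$ (Proposition \ref{stochastic_maxwell_spectral_multiplier_l2_properties} and Lemma \ref{stochastic_maxwell_commutation_of_M_and_S_n}) together with the skew-adjointness of $M$ (Proposition \ref{stochastic_maxwell_maxwell_skew}) the transport term drops: $\re\langle y_n,P_nMy_n\rangle_{L^2(D)^6}=\re\langle y_n,My_n\rangle_{L^2(D)^6}=0$. The nonlinearity produces the good sign $\re\langle y_n,P_nF(y_n)\rangle_{L^2(D)^6}=\|y_n\|^{q+2}_{L^{q+2}(D)^6}$, which I move to the left-hand side to recover the second term in \eqref{stochastic_maxwell_transformation_approx_equation_solve_estimate}. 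For $A(t)$ I would split off the diagonal multiplication part $\tfrac12\sum_j B_j^2 y_n$, which contributes at most $\tfrac12\big(\sup_j\|B_j\|_{L^{\infty}(D)}^2\big)\|y_n\|^2_{L^2(D)^6}$, from the cross part $i\sum_j\beta_j(t)T_jy_n$, where $T_jy=\binom{\nabla B_j\times y_2}{-\nabla B_j\times y_1}$.

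The key structural observation, and the step I expect to be the main obstacle to state cleanly, is that the cross part contributes \emph{nothing}, which is exactly what keeps the constant free of the unbounded factors $\beta_j(t)$. The scalar triple product identity gives $\langle y_n,T_jy_n\rangle_{L^2(D)^6}=\int_D\nabla B_j\cdot\big(\overline{(y_n)_2}\times(y_n)_1+(y_n)_2\times\overline{(y_n)_1}\big)\operatorname{dx}$, and since $B_j$ is real-valued the bracketed vector field is invariant under complex conjugation, so this pairing is real; consequently $\re\langle y_n,i\beta_j(t)T_jy_n\rangle_{L^2(D)^6}=\beta_j(t)\,\im\langle y_n,T_jy_n\rangle_{L^2(D)^6}=0$. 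With this cancellation the remaining terms are handled exactly as in Proposition \ref{stochastic_maxwell_weak_approx_equation_solve}: the inhomogeneity $\langle y_n,P_n\widetilde J\rangle_{L^2(D)^6}$ is absorbed by Young's inequality, the quadratic variation is controlled by $\sum_i\|S_{n-1}\widetilde b_i\|^2_{L^2(D)^6}\leq\sum_i\|\widetilde b_i\|^2_{L^2(D)^6}$ using $\|S_{n-1}\|_{B(L^2(D)^6)}=1$, and the martingale term is estimated by the Burkholder--Davis--Gundy inequality after taking the supremum in time and the expectation, with the resulting supremum reabsorbed on the left. Gronwall's lemma then yields \eqref{stochastic_maxwell_transformation_approx_equation_solve_estimate} with a constant depending only on $\sup_j\|B_j\|_{L^{\infty}(D)}$; in particular the right-hand side is finite, which combined with the blow-up alternative forces $\tau_n=T$ almost surely, just as in the proof of Proposition \ref{stochastic_maxwell_weak_approx_equation_solve}.
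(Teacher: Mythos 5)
Your proposal is correct and follows essentially the same route as the paper: It\^o's formula for $\|y_n(t)\|^2_{L^2(D)^6}$, the vanishing of $\re\langle y_n,My_n\rangle_{L^2(D)^6}$ and of $\re\langle y_n,i\beta_j T_jy_n\rangle_{L^2(D)^6}$ (the paper derives the latter from the symmetry of $T_j$ rather than the triple-product identity, but it is the same cancellation), the good sign of the nonlinearity, BDG plus Gronwall, and the blow-up alternative. The only point your appeal to ``standard theory'' glosses over is that the drift coefficients $\beta_j(t,\omega)$ are not uniformly bounded in $\omega$, which the paper handles by an additional layer of stopping times $\tau_m=\inf\{t:|\beta_i(t)|>m\}$ truncating the Brownian motions before invoking classical SODE existence, removing this truncation at the end via uniqueness and $\tau_{m(\omega)}=T$ for $m$ large; this is a routine localization, not a gap in the substance of your argument.
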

\begin{proof}
	First, we define the stopping time 
	$$ \tau_m:=\inf\big \{t\in [0,T]:|\beta_i(t)|>m\text{ for some }i=1,\dots,N\big \} $$ 
	and solve the equation
	\begin{equation}\label{stochastic_maxwell_tsee_aprox_truncated}
	\begin{cases}
	dy_{n}^{(m)}(t)&=[P_nMy_{n}^{(m)}(t)-P_nF(y_{n}^{(m)}(t))+P_nA^{(m)}(t)y_{n}^{m}(t)+P_n\widetilde{J}(t)]\operatorname{dt}+\sum_{i=1}^{N}S_{n-1}\widetilde{b}_i(t)\ d\beta_i(t),\\
	u(0)&=S_{n-1}u_0,
	\end{cases}
	\end{equation}
	where the truncated linear operator $ A^{(m)} $ is given by
	$$ A^{(m)}(t)y(t):=\sum_{j=1}^{N}\Big(i\beta_j(t\wedge\tau_m)\binom{\nabla B_j\times y_2(t)}{-\nabla B_j\times y_1(t)}+B_j^{2}y(t).$$

By Lemma Lemma \ref{stochastic_maxwell_nonlinearity_properties} and \ref{stochastic_maxwell_properties_galerkin}, this an ordinary stochastic differential equation in the closed subspace $ R(P_n) \subset  L^2(D)^6 $ with locally Lipschitz nonlinearity. The stopping time $ \tau_m $ is necessary at this point, since it leads to $ L^{\infty} $-coefficients that are required to apply the classical results for stochastic ordinary differential equations.

There exists a stopping time $ \tau^{(m,n)} $, an increasing sequence of stopping times $ (\tau^{(m,n)}_k)_k $ with $ \tau^{(m,n)}_k\to\tau^{(m,n)} $ almost surely for $ k\to\infty $ and a process $ y_n^{(m)} $ with $$ y_n^{(m)}\in C(0,\tau^{(m,n)}_k;L^2(D)^6) $$ almost surely, such that $ y_n^{(m)} $ solves \eqref{stochastic_maxwell_tsee_aprox_truncated} on $ [0,\tau^{(m,n)}_k] $. Moreover, we have the blow-up alternative
\begin{align}\label{maxwell_nonlinear_galerkin_approx_equation_blow_up_2}
\PP\Big\{\tau^{(m,n)}<T, \sup_{t\in[0,\tau^{(m,n)})}\|y_n(t)\|_{L^2(D)^6}<\infty\Big \}=0.
\end{align}
%	The mapping $ \|\cdot\|_{L^{2}(D)^6}^{2}:L^2(D)^6\to\R $ is differentiable with  
%	$
%	(\|\cdot\|_{L^{2}(D)^6}^{2})'(u)v=2\re\langle u,v\rangle_{L^{2}(D)^6}$ and $(\|\cdot\|_{L^{2}(D)^6}^{2})''(u)(v,w)=2\re\langle w,v\rangle_{L^{2}(D)^6}$ for every $ u,v,w\in L^{2}(D)^6. $ Therefore It\^o's formula yields
For the a priori estimate, we use the It\^o formula from Lemma \ref{stochastic_maxwell_ito_formula} to get
	\begin{align*}
	\|y_n^{(m)}(t)&\|_{L^{2}(D)^6}^{2}-\|u_0\|_{L^{2}(D)^6}^{2}\\
	=&2\int_{0}^{t}\re\langle y_n^{(m)},-|y_n^{(m)}(s)|^qy_n^{(m)}(s)+A^{(m)}(s)y_n^{(m)}(s)+\widetilde{J}(s)\rangle_{L^2(D)^6}\operatorname{ds}\\
	&+2\sum_{j=1}^{N}\int_{0}^{t}\re\langle y_n^{(m)}(s),S_{n-1}\widetilde{b}_j(s)\rangle_{L^2(D)^6}d\beta_j(s)+\sum_{j=1}^{N}\int_{0}^{t}\|S_{n-1}\widetilde{b}_j(s)\|_{L^2(D)^6}^{2}\operatorname{ds}.
	\end{align*}
	Using the skew-symmetry of the cross-product, we calculate
	\begin{align*}
\big\langle y_n^{(m)},i\beta_j(t\wedge\tau_m)\binom{\nabla B_j\times y_{n,2}^{(m)}}{-\nabla B_j\times y_{n,1}^{(m)}}\big\rangle_{L^2(D)^6}&=-\big\langle i\beta_j(t\wedge\tau_m)y_n^{(m)},\binom{\nabla B_j\times y_{n,2}^{(m)}}{-\nabla B_j\times y_{n,1}^{(m)}}\big\rangle_{L^2(D)^6}\\
&=\big\langle i\beta_j(t\wedge\tau_m)\binom{\nabla B_j\times y_{n,2}^{(m)}}{-\nabla B_j\times y_{n,1}^{(m)}},y_n^{(m)}\big\rangle_{L^2(D)^6},
	\end{align*}
	which implies 
	$$ \re\big\langle y_n^{(m)}(s),i\beta_j(t\wedge\tau_m)\binom{\nabla B_j\times y_{n,2}^{(m)}}{-\nabla B_j\times y_{n,1}^{(m)}}\big\rangle_{L^2(D)^6}=0. $$ 
	Hence, the expression from above simplifies to
	\begin{align}\label{stochastic_maxwell_transformation_approx_equation_solve_bla_bla}
	\|y_n^{(m)}(t)&\|_{L^{2}(D)^6}^{2}+2\int_{0}^{t}\int_{D}|y_n^{(m)}(s,x)|^{q+2}\operatorname{dx}\operatorname{dt}\notag\\
	=&\|u_0\|_{L^{2}(D)^6}^{2}+2\int_{0}^{t}\re\big\langle y_n^{(m)}(s),\widetilde{J}(s)+\sum_{j=1}^{N}B_j^2y_n^{(m)}(s)\big\rangle_{L^{2}(D)^6}\operatorname{ds}\notag\\
	&+2\sum_{j=1}^{N}\int_{0}^{t}\re\langle y_n^{(m)}(s),S_{n-1}\widetilde{b}_j(s)\rangle_{L^2(D)^6}d\beta_j(s)+\sum_{j=1}^{N}\int_{0}^{t}\|S_{n-1}\widetilde{b}_j(s)\|_{L^2(D)^6}^{2}\operatorname{ds}
	\end{align}
	almost surely for $ t\in [0,\tau_k^{(m,n)}]. $ Since the second term on the left hand side is positive, we can drop it for a moment. We first take the supremum over time and then the expectation value and estimate the remaining quantities term by term. We start with the deterministic part.
	\begin{align*}
	\E&\sup_{s\in[0,t\wedge\tau_k^{(m,n)}]}\Big|\int_{0}^{s}\re\big\langle y_n^{(m)}(r),\widetilde{J}(r)+\sum_{j=1}^{N}B_j^2y_n^{(m)}(r)\big\rangle\operatorname{dr}+\sum_{j=1}^{N}\int_{0}^{s}\|S_{n-1}\widetilde{b}_j(r)\|_{L^2(D)^6}^{2}\operatorname{dr}\Big|\\
	\leq& \E\int_{0}^{t\wedge\tau_k^{(m,n)}} \|y_n^{(m)}(r)\|_{L^{2}(D)^6}\|\widetilde{J}(r)\|_{L^{2}(D)^6}+\sup_{j=1,\dots,N}\|B_j\|^2_{L^{\infty}(D)}\|y_n^{(m)}(r)\|_{L^{2}(D)^6}^2\operatorname{dr}\\
	&+\sum_{j=1}^{N}\|\widetilde{b_j}\|_{L^{2}(\Omega\times[0,T]\times D)^6}^{2}\\
	\leq& \int _{0}^{t}\E\sup_{r\in [0,s\wedge\tau_k^{(m,n)}]]}\|y_n^{(m)}(r)\|_{L^{2}(D)^{6}}^{2}\big(\sup_{j=1,\dots,N}\|B_j\|^2_{L^{\infty}(D)}+\tfrac{1}{2}\big)\operatorname{ds}+\frac{1}{2}\|\widetilde{J}\|_{L^{2}(\Omega\times[0,T]\times D)^6}^{2}\\
	&+\sum_{j=1}^{N}\|\widetilde{b_j}\|_{L^{2}(\Omega\times[0,T]\times D)^6}^{2}.
	\end{align*}
	The stochastic part can be estimated with the Burgholder-Davies-Gundy inequility. 
	\begin{align*}
	\E\sup_{s\in[0,t\wedge\tau_k^{(m,n)}]}&\Big|\sum_{j=1}^{N}\int_{0}^{s}\re\langle y_n^{(m)}(s),S_{n-1}\widetilde{b}_j(s)\rangle_{L^2(D)^6}d\beta_j(s)\Big|\\
	&\leq C\E\Big(\sum_{j=1}^{N} \int_{0}^{t\wedge\tau_k^{(m,n)}}\big|\re\langle y_n^{(m)}(s),S_{n-1}\widetilde{b}_j(s)\rangle_{L^2(D)^6}\big|^{2}\operatorname{ds}\Big)^{1/2}\\
	&\leq C\E\sup_{s\in[0,t\wedge\tau_k^{(m,n)}]}\|y_n^{(m)}(s)\|_{L^{2}(D)^6}\Big(\sum_{j=1}^{N}\|S_{n-1}\widetilde{b}_j\|_{L^{2}([0,T]\times D)}^{2}\Big)^{1/2}\\
	&\leq \frac{1}{4}\E\sup_{s\in[0,t\wedge\tau_k^{(m,n)}]}\|y_n^{(m)}(s)\|_{L^{2}(D)^6}^{2}+C^2\E\sum_{j=1}^{N}\|\widetilde{b}_j\|_{L^{2}(\Omega\times[0,T]\times D)}^2.
	\end{align*}
	Putting these estimates together, we get
	\begin{align*}
	\E\sup_{s\in[0,t\wedge\tau_k^{(m,n)}]}&\|y_n^{(m)}(s)\|_{L^{2}(\Omega\times D)}^{2}\\
	\lesssim& \|u_0\|_{L^2(D)^6}^{2}+\|\widetilde{J}\|_{L^{2}(\Omega\times[0,T]\times D)}^{2}+\sum_{j=1}^{N}\|\widetilde{b_j}\|_{L^{2}(\Omega\times[0,T]\times D)}^{2}\\
	&+\big(\sup_{j=1,\dots,N}\|B_j\|^2_{L^{\infty}(D)}+1\big)\int _{0}^{t}\E\sup_{r\in [0,s\wedge\tau_k^{(m,n)}]}\|y_n^{(m)}(r)\|_{L^{2}(D)^{6}}^{2}\operatorname{ds}
	\end{align*}
	Consequently, Gronwall yields
	\begin{align*}
	\E\sup_{s\in[0,t\wedge\tau_k^{(m,n)}]}&\|y_n^{(m)}(s)\|_{L^{2}(D)^6}^{2}\\
	&\lesssim _{B_j} \Big(\|\widetilde{J}\|_{L^{2}(\Omega\times[0,T]\times D)^6}^{2}+\sum_{j=1}^{N}\|\widetilde{b_j}\|_{L^{2}(\Omega\times[0,T]\times D)^6}^{2}+\|u_0\|_{L^{2}(\Omega\times D)}^{2}\Big)
	\end{align*}
	for every $ t\in [0,T] $. Next, we pass to the limit $ k\to\infty $ with Fatou's Lemma. 
%	Note that one can interchange $ \sup $ and $ \liminf $ in an upper estimate, since $ \liminf $ can be written in the form $ \sup \inf $ and supremums can be interchanged, whereas $ \sup\inf\leq \inf\sup $.
	\begin{align}\label{stochastic_maxwell_tsee_aprox_trunc_estimate1}
\E\sup_{t\in[0,\tau^{(m,n)})}\|y_n^{(m)}(t)\|_{L^{2}(D)^6}^{2}&\leq \liminf_{k\to\infty} \E\sup_{t\in[0,\tau_k^{(m,n)}]}\|y_n^{(m)}(t)\|_{L^{2}(D)^6}^{2}\notag\\
&\lesssim _{B_j} \Big(\|\widetilde{J}\|_{L^{2}(\Omega\times[0,T]\times D)}^{2}+\sum_{j=1}^{N}\|\widetilde{b_j}\|_{L^{2}(\Omega\times[0,T]\times D)}^{2}+\|u_0\|_{L^{2}(\Omega\times D)}^{2}\Big).
	\end{align}
Note that this bound is independent of $ m $ and $ n. $ In particular, this estimate implies $ \tau^{(m,n)}=T $ almost surely. Indeed, there exists $ N\subset\Omega $ with $ \PP(N)=0 $, such that $ \Omega\setminus \big(N\cup \{\tau^{(m,n)}=T \} \big) $ can be decomposed into disjoint sets 
\begin{align*}
\Big\{\tau^{(m,n)}<T, \sup_{t\in[0,\tau^{(m,n)})}\|y_n^{(m)}(t)\|_{L^2(D)^6}<\infty\Big \},\ \Big\{\tau^{(m,n)}<T, \sup_{t\in[0,\tau^{(m,n)})}\|y_n^{(m)}(t)\|_{L^2(D)^6}=\infty\Big \}.
\end{align*}
Here, the first set has measure zero by \eqref{maxwell_nonlinear_galerkin_approx_equation_blow_up_2} and the second one has measure zero, since \eqref{stochastic_maxwell_tsee_aprox_trunc_estimate1} implies $ \sup_{t\in[0,\tau^{(m,n)})}\|y_n^{(m)}(t)\|_{L^2(D)^6}<\infty $ almost surely.
As a consequence of \eqref{stochastic_maxwell_transformation_approx_equation_solve_bla_bla}, we also get 
\begin{align}\label{stochastic_maxwell_tsee_aprox_trunc_estimate2}
\E\int_{0}^{T}\int_{D}&|y_n^{(m)}(s,x)|^{q+2}\operatorname{dx}\operatorname{dt}\lesssim _{B_j} \Big(\|\widetilde{J}\|_{L^{2}(\Omega\times[0,T]\times D)^6}^{2}+\sum_{j=1}^{N}\|\widetilde{b_j}\|_{L^{2}(\Omega\times[0,T]\times D)^6}^{2}+\|u_0\|_{L^{2}(D)}^{2}\Big).
\end{align}
We already know that $ y_n^{(m)} $ is almost surely continuous on $ [0,T) $ as a function with values in $ L^{2}(D)^6 $, the pathwise continuity up to $ y_n^{(m)}(T)$ follows from Lemma \ref{stochastic_maxwell_ito_formula}.
	
It remains to take the limit $ m\to\infty. $ By uniqueness, we have $ y_n^{(m)}(\omega,t)=y_n^{(k)}(\omega,t) $ for almost all $ \omega\in\Omega $, all $ t\in [0,\tau_m] $ and for every $ k\geq m. $ Moreover, for almost all $ \omega\in\Omega $, there exists $ m(\omega), $ such that $ \tau_{m(\omega)}(\omega)=T. $ Hence, we the limit
$ y_n=\lim_{m\to\infty}y_n^{(m)} $ is well-defined, adapted and satisfies \eqref{stochastic_maxwell_tsee_aprox_truncated}. Again using Fatou's Lemma yields analogous estimates to \eqref{stochastic_maxwell_tsee_aprox_trunc_estimate1} and \eqref{stochastic_maxwell_tsee_aprox_trunc_estimate2} for $ y_n. $ This closes the proof.
\end{proof}
To obtain strong solutions, we need an estimate for $ My_n $, uniformly in $ n\in\N. $ In a deterministic setting, one would try to control $ y_n' $ using the structure of the equation and then use the uniform estimates for $ y_n $ to find a bound for $ My_n $. However, solutions of stochastic differential equations are not differentiable in time and hence, we have to follow a different approach. We derive an a priori estimate for 
\[ \Big\| P_nMy_n(t)-P_nF(y_n(t))+P_n\sum_{j=1}^{N}B_j^{2}y_n(t)+P_ni\beta_j(t)\binom{\nabla B_j\times y_{n,2}(t)}{-\nabla B_j\times y_{n,1}(t)}+P_n\widetilde{J}(t)\Big\|_{L^{2}(D)^6}^2. \]
To do this, we have to show, that this quantity is an It\^o process. 
\begin{Lemma}\label{stochastic_maxwell_transformation_lambda_ito_process}
	The stochastic process 
	\[ \Lambda_n(t):= P_nMy_n(t)-P_nF(y_n(t))+P_n\sum_{j=1}^{N}B_j^{2}y_n(t)+P_ni\beta_j(t)\binom{\nabla B_j\times y_{n,2}(t)}{-\nabla B_j\times y_{n,1}(t)}+P_n\widetilde{J}(t) \]
	is an It\^o process with
	\begin{align*}
	d\Lambda_n (t)=& P_n\Big[M\Lambda_n-F'(y_n(t))(\Lambda_n(t))+\sum_{j=1}^{N}\Big(i\beta_j(t)\binom{\nabla B_j\times \Lambda_{n,2}(t)}{-\nabla B_j\times \Lambda_{n,1}(t)}+B_j^{2}\Lambda_n\Big)\\
	&-\frac{1}{2}\sum_{j=1}^{N}B_j^2\Big(\sum_{k=1}^{N}-ib_k(t)B_k+J(t)\Big)\expbt\\ &+\Big(\sum_{k=1}^{N}-i\partial_tb_k(t)B_k+\partial_tJ(t)\Big)\expbt-\frac{1}{2}\sum_{j=1}^{N}F''(y_n)(S_{n-1}\widetilde{b}_j,S_{n-1}\widetilde{b}_j)\Big]\operatorname{dt}\\
	&+\sum_{j=1}^{N}P_n\Big[MS_{n-1}\widetilde{b_j}-F'(y_n)(S_{n-1}\widetilde{b_j})+\sum_{k=1}^{N}i\beta_k(t)\binom{\nabla B_k\times S_{n-1}\widetilde{b}_{j,2}(t)}{-\nabla B_k\times S_{n-1}\widetilde{b}_{j,1}(t)}\\
	&+\sum_{k=1}^{N}B_k^{2}S_{n-1}\widetilde{b_j}+i\binom{\nabla B_j\times y_{n,2}(t)}{-\nabla B_j\times y_{n,1}(t)}-iB_j\Big(\sum_{k=1}^{N}-ib_k(t)B_k+J(t)\Big)\expbt \Big]d\beta_j
	\end{align*}
	
	%+M_n\widetilde{J}-F_n'(y_n(t))(\widetilde{J})+\sum_{j=1}^{N} B_j^{2}\widetilde{J}\\
	%&+\sum_{j=1}^{N}i\beta_j(t)\binom{\nabla B_j\times \widetilde{J}_{2}(t)}{-\nabla B_j\times \widetilde{J}_{1}(t)}
	almost surely for every $ t\in [0,T]. $ 
\end{Lemma}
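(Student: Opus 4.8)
The plan is to view each summand of $\Lambda_n$ as a regular function of the semimartingale $y_n$ (together with the scalar processes $\beta_j$ and the explicitly given process $\widetilde J$) and to differentiate term by term with It\^o's formula, using crucially that everything takes values in the finite--dimensional space $R(P_n)$. On $R(P_n)$ every operator occurring is bounded: by Proposition \ref{stochastic_maxwell_spectral_multiplier_l2_properties} we have $R(P_n)\subset D(M)$ with $M$ bounded there, and $P_nM=MP_n$ by Lemma \ref{stochastic_maxwell_commutation_of_M_and_S_n}; the multiplications $v\mapsto B_j^2 v$ and $v\mapsto C_j v:=\binom{\nabla B_j\times v_2}{-\nabla B_j\times v_1}$ are bounded because $B_j\in W^{1,\infty}(D)$ by [M6]; and by Lemma \ref{stochastic_maxwell_properties_galerkin} the map $P_nF(\cdot)$ sends $R(P_n)$ into $R(P_n)$. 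Hence all summands are genuine $R(P_n)$--valued It\^o processes, the finite dimensionality removes any integrability obstruction, and the classical It\^o formula applies directly; the whole proof is then the collection of drift and diffusion contributions.

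First I would record the dynamics of $y_n$ from \eqref{maxwell_nonlinear_galerkin_approx_equation_2}: its martingale part is $\sum_j S_{n-1}\widetilde b_j\,d\beta_j$, so $d\langle\beta_j,y_n\rangle=S_{n-1}\widetilde b_j\,dt$ (the $\beta_j$ being independent). For the linear pieces $P_nMy_n$ and $P_n\sum_jB_j^2 y_n$ the bounded operators pass through the stochastic integral and yield $P_nM\,dy_n$ and $P_n\sum_j B_j^2\,dy_n$. For the nonlinear piece $-P_nF(y_n)$ I would use that $F$ is twice continuously Fr\'echet differentiable (Lemma \ref{stochastic_maxwell_nonlinearity_properties}, available since $q\in(1,2]$ by [M1]): It\^o's formula then produces the first--order term $-P_nF'(y_n)\,dy_n$ and the quadratic--variation correction $-\tfrac12\sum_j P_nF''(y_n)(S_{n-1}\widetilde b_j,S_{n-1}\widetilde b_j)\,dt$.

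The two remaining summands are honest products and require the It\^o product rule. For each $iP_n\beta_j C_j y_n$ the rule gives $i\beta_j C_j\,dy_n+iC_j y_n\,d\beta_j$ together with the cross--variation $iC_j\,d\langle\beta_j,y_n\rangle=iC_j S_{n-1}\widetilde b_j\,dt$. For $P_n\widetilde J$ I would write $\widetilde J=g\,\expbt$ with $g:=\sum_k(-ib_kB_k)+J$, which is of finite variation in time (its time derivative exists in $L^2$ by [M4], [M5], so it carries no martingale part); applying It\^o to the stochastic exponential gives $d(\expbt)=-i\sum_nB_n\expbt\,d\beta_n-\tfrac12\sum_nB_n^2\expbt\,dt$, and the product rule with $g$ (no cross term, $g$ being finite--variation) then supplies the drift $(\partial_t g)\expbt=\big(\sum_k(-i\partial_t b_kB_k)+\partial_t J\big)\expbt$, the drift correction $-\tfrac12\sum_n B_n^2\,g\,\expbt$, and the diffusion $-i\sum_n B_n\,g\,\expbt\,d\beta_n$.

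Finally I would re--insert the drift of $y_n$ and regroup. The contributions in which $M$, $-F'(y_n)$, $B_j^2$ and $i\beta_j C_j$ act on the drift of $y_n$ reassemble into $P_n\big[M-F'(y_n)+\sum_j(i\beta_j C_j+B_j^2)\big]$ applied to $\Lambda_n$, i.e. the first four drift terms of the claim; the exponential and quadratic--variation corrections furnish the remaining explicit drift terms; and collecting all $d\beta_j$--coefficients, including the double sum $\sum_k i\beta_k C_k S_{n-1}\widetilde b_j$ arising when $C_k$ hits the martingale part of $y_n$, gives the stated diffusion. I expect the only real difficulty to be organisational bookkeeping: keeping the second--order It\^o corrections straight — the $F''$--term, the cross--variations $iC_j S_{n-1}\widetilde b_j\,dt$ from the $\beta_j C_j y_n$ products, and the $-\tfrac12\sum_n B_n^2$ correction from the exponential — and sorting the mixed $d\beta$--sums into the correct per--$j$ coefficient. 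All the analytic input (boundedness on $R(P_n)$, the $C^2$--regularity of $F$, and the time--differentiability of $J$ and the $b_k$) is already in place, so once the accounting is finished the asserted identity for $d\Lambda_n$ follows.
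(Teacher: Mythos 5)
Your method is essentially the paper's own: It\^o's formula for the $C^2$ map $F$ applied to $P_nF(y_n)$, the product rule for $P_n\widetilde J=\big(\sum_k(-ib_kB_k)+J\big)\expbt$ via the stochastic exponential, and the observation that the remaining summands depend (bi)linearly on $(y_n,\beta_1,\dots,\beta_N)$, all justified by the finite dimensionality of $R(P_n)$; the analytic inputs you cite (Lemmas \ref{stochastic_maxwell_nonlinearity_properties} and \ref{stochastic_maxwell_properties_galerkin}, Proposition \ref{stochastic_maxwell_spectral_multiplier_l2_properties}, the assumptions [M4]--[M6]) are the right ones.

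One point is not resolved in your write-up. You correctly identify the cross-variation correction coming from $d\langle\beta_j,y_n\rangle=S_{n-1}\widetilde b_j\,dt$ in the product $i\beta_j\binom{\nabla B_j\times y_{n,2}}{-\nabla B_j\times y_{n,1}}$, namely the drift contribution
\begin{equation*}
P_n\sum_{j=1}^{N} i\binom{\nabla B_j\times (S_{n-1}\widetilde b_j)_{2}}{-\nabla B_j\times (S_{n-1}\widetilde b_j)_{1}}\operatorname{dt},
\end{equation*}
but this term does not occur among the drift terms of the asserted identity, so your concluding claim that the corrections ``furnish the remaining explicit drift terms'' and reproduce the statement is not literally true. (The paper's own proof has the same blind spot: it asserts that all second derivatives vanish because the terms are linear, overlooking the nonvanishing mixed derivative of $(\beta_j,y)\mapsto\beta_j\,\nabla B_j\times y$.) You should either record this additional drift term explicitly --- amending the displayed formula, which is what a careful execution of your plan actually yields --- or explain why it is absent; it does not cancel against anything else in the computation. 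The discrepancy is harmless for the subsequent estimate of $\E\sup_{t}\|\Lambda_n(t)\|^2_{L^2(D)^6}$, since the extra term is controlled by $\|\nabla B_j\|_{L^{\infty}(D)}\|\widetilde b_j\|_{L^{2}(\Omega\times[0,T]\times D)^6}$ uniformly in $n$, but as written your proof silently asserts agreement with a formula it does not in fact produce.
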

\begin{proof}
	With Lemma \ref{stochastic_maxwell_nonlinearity_properties} and Lemma \ref{stochastic_maxwell_properties_galerkin}, one shows, that $ P_nF(y_n) $ is an It\^o process in $ L^{2}(D)^6 $
	with \[ d(P_nF(y_n))=\Big[P_nF'(y_n)\Lambda_n+\tfrac{1}{2}\sum_{j=1}^{N}P_nF''(y_n)(S_{n-1}\widetilde{b}_j,S_{n-1}\widetilde{b}_j)\Big]dt+ \sum_{j=1}^{N}P_nF'(y_n)S_{n-1}\widetilde{b}_jd\beta_j.\]
	 Moreover, by the product rule,  
	$$  P_n\widetilde{J}(t,x)=P_n\Big(\sum_{j=1}^{N}-ib_j(t,x)B_j(x)+J(t,x)\Big)\expbt $$
	is an It\^o process in $ L^2(D)^6 $ of the form 
	\begin{align*}
	d(P_n&\widetilde{J})(t)\\
	=&P_n\Big(-\frac{1}{2}\sum_{j=1}^{N}B_j^2\Big(\sum_{k=1}^{N}-ib_k(t)B_k+J(t)\Big)+\sum_{k=1}^{N}-i\partial_tb_k(t)B_k+\partial_tJ(t)\Big)\expbt\operatorname{dt}\\
	&-P_n\sum_{j=1}^{N}\Big[iB_j\big(\sum_{k=1}^{N}-ib_k(t)B_k+J(t)\big)\expbt \Big]d\beta_j 
	\end{align*}
	The remaining expression $ \Lambda_n+P_nF(y_n)-P_n\widetilde{J} $ is a $ C^{2} $-function of the It\^o processes 
	$$ dy_n(t,x)=\Lambda_n(t)dt+S_{n-1}\sum_{j=1}^{N}\widetilde{b}_jd\beta_j(t) $$ 
	and $ \beta_j $, $ j=1,\dots,N. $ Therefore, we can calculate $ d(\Lambda_n+P_nF(y_n)-P_n\widetilde{J}) $ with It\^o's formula. Thereby it is crucial that all occurring terms depend only linearly on $ y_n $ and $ \beta_j $ and consequently the second derivatives vanish. This finally proves the claimed result.
\end{proof}

\begin{Proposition}\label{stochastic_maxwell_transformation_approx_equation_energy}
	The process $ \Lambda_n $ satisfies the estimate
	\[ \E\sup_{t\in [0,T]}\|\Lambda_n(t)\|_{L^2(D)^6}^2\leq C \big(1+\E\|Mu_0\|_{L^2(D)^6}^{2}+\E\|u_0\|_{L^2(D)^6}^{2}+\E\|u_0\|^{2q+2}_{L^{2q+2}(D)^3}\big), \]
	with a constant $ C>0 $ depending on $ J,b_j $ and $ B_j $ for $ j=1,\dots,N $, but not on $ n\in\N. $ 
\end{Proposition}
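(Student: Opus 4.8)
The plan is to apply the Itô formula to the scalar process $\|\Lambda_n(t)\|_{L^2(D)^6}^2$, using the explicit semimartingale decomposition of $\Lambda_n$ furnished by Lemma \ref{stochastic_maxwell_transformation_lambda_ito_process}. Since $\Lambda_n(t)\in R(P_n)\subset D(M)$ for every $t$, the Itô formula of Lemma \ref{stochastic_maxwell_ito_formula} applies and gives, writing $\mu_n$ and $\sigma_{n,j}$ for the drift and the $j$-th diffusion coefficient displayed in Lemma \ref{stochastic_maxwell_transformation_lambda_ito_process},
\begin{align*}
\|\Lambda_n(t)\|_{L^2(D)^6}^2=&\|\Lambda_n(0)\|_{L^2(D)^6}^2+\int_0^t 2\re\langle \Lambda_n(s),\mu_n(s)\rangle_{L^2(D)^6}+\sum_{j=1}^N\|\sigma_{n,j}(s)\|_{L^2(D)^6}^2\operatorname{ds}\\
&+2\int_0^t \re\big\langle\Lambda_n(s),\textstyle\sum_{j=1}^N\sigma_{n,j}(s)\,d\beta_j(s)\big\rangle_{L^2(D)^6}.
\end{align*}
As in Proposition \ref{stochastic_maxwell_transformation_approx_equation_solve} I would first localize with the stopping times $\tau_m$ bounding $|\beta_j|$, derive the estimate on $[0,t\wedge\tau_m]$, and remove the localization at the end by Fatou's lemma. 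The drift pairing is then treated term by term, exploiting the same three structural features that carried Proposition \ref{stochastic_maxwell_transformation_approx_equation_solve}: first, $\re\langle\Lambda_n,P_nM\Lambda_n\rangle_{L^2(D)^6}=\re\langle\Lambda_n,M\Lambda_n\rangle_{L^2(D)^6}=0$ by the skew-adjointness of $M$ (Lemma \ref{stochastic_maxwell_maxwell_skew}) together with $P_n\Lambda_n=\Lambda_n$ and $MP_n=P_nM$ (Lemma \ref{stochastic_maxwell_commutation_of_M_and_S_n}); second, the cross-product contributions $\re\langle\Lambda_n,i\beta_j(\nabla B_j\times\cdot\,)\rangle_{L^2(D)^6}$ vanish by the skew-symmetry of the cross product, exactly as before; and third, $-\re\langle\Lambda_n,P_nF'(y_n)\Lambda_n\rangle_{L^2(D)^6}=-\re\langle\Lambda_n,F'(y_n)\Lambda_n\rangle_{L^2(D)^6}\le 0$ carries the favourable sign by the monotonicity $\re\langle F'(u)v,v\rangle\ge0$ from Lemma \ref{stochastic_maxwell_nonlinearity_properties} and may be discarded.

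After these cancellations only benign contributions survive. The terms $\langle\Lambda_n,P_nB_j^2\Lambda_n\rangle$ produce a factor $\sup_j\|B_j\|_{L^\infty(D)}^2\|\Lambda_n\|^2_{L^2(D)^6}$ destined for Grönwall's lemma, while the inhomogeneous drift and diffusion terms built from $J$, $\partial_tJ$, $b_k$, $\partial_tb_k$ and the two nonlinear quantities $F'(y_n)(S_{n-1}\widetilde b_j)$ and $F''(y_n)(S_{n-1}\widetilde b_j,S_{n-1}\widetilde b_j)$ are absorbed by Young's inequality into $\tfrac14\|\Lambda_n\|_{L^2(D)^6}^2$ plus controllable remainders. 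For the nonlinear remainders I would use the pointwise bounds $|F'(u)v|\lesssim|u|^q|v|$ and $|F''(u)(v,v)|\lesssim|u|^{q-1}|v|^2$ of Lemma \ref{stochastic_maxwell_nonlinearity_properties} followed by Hölder's inequality, e.g.
\[ \|F''(y_n)(S_{n-1}\widetilde b_j,S_{n-1}\widetilde b_j)\|_{L^2(D)^6}^2\lesssim\|y_n\|_{L^{q+2}(D)^6}^{2(q-1)}\,\|S_{n-1}\widetilde b_j\|_{L^{\frac{4(q+2)}{4-q}}(D)^6}^4, \]
and analogously $\|F'(y_n)(S_{n-1}\widetilde b_j)\|_{L^2(D)^6}^2\lesssim\|y_n\|_{L^{q+2}(D)^6}^{2q}\,\|S_{n-1}\widetilde b_j\|_{L^{\frac{2(q+2)}{2-q}}(D)^6}^2$. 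Here the $L^{q+2}$-powers of $y_n$ are integrable in $(\omega,t)$ by Proposition \ref{stochastic_maxwell_transformation_approx_equation_solve}, the operators $S_{n-1}$ are bounded uniformly in $n$ on every $L^p(D)^6$ by Lemma \ref{stochastic_maxwell_properties_galerkin_uniformly_in_n}, and the surviving Lebesgue exponents are exactly those dictated by the integrability assumption [M5] on $b_j$ (with the endpoint $q=2$ forcing $b_j\in L^\infty$). A final Hölder step in $(\omega,t)$, again powered by Proposition \ref{stochastic_maxwell_transformation_approx_equation_solve}, turns these into $n$-independent constants; the martingale term is handled by the Burkholder--Davis--Gundy inequality, producing $\tfrac14\E\sup_{[0,t]}\|\Lambda_n\|_{L^2(D)^6}^2$ plus a time integral of the same diffusion square already estimated. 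For the initial value I would expand $\Lambda_n(0)=P_nMS_{n-1}u_0-P_nF(S_{n-1}u_0)+P_n\sum_jB_j^2S_{n-1}u_0+P_n\widetilde J(0)$ using $\beta_j(0)=0$, and observe that $\|P_nMS_{n-1}u_0\|_{L^2(D)^6}=\|S_{n-1}P_nMu_0\|_{L^2(D)^6}\le\|Mu_0\|_{L^2(D)^6}$ by commutation and $L^2$-contractivity (Proposition \ref{stochastic_maxwell_spectral_multiplier_l2_properties}, Lemma \ref{stochastic_maxwell_commutation_of_M_and_S_n}), while $\|P_nF(S_{n-1}u_0)\|_{L^2(D)^6}^2=\|S_{n-1}u_0\|_{L^{2(q+1)}(D)^6}^{2(q+1)}\lesssim\|u_0\|_{L^{2q+2}(D)^6}^{2q+2}$ by the uniform $L^{2(q+1)}$-boundedness of $S_{n-1}$. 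Collecting terms, absorbing $\tfrac14\|\Lambda_n\|^2$ and $\tfrac14\E\sup\|\Lambda_n\|^2$ to the left, and applying Grönwall's lemma then yields the asserted bound uniformly in $n$.

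The main obstacle is the noise term $P_nMS_{n-1}\widetilde b_j$ entering the diffusion of $\Lambda_n$: it contributes both to the Itô correction $\sum_j\|\sigma_{n,j}\|^2$ and, through Burkholder--Davis--Gundy, to the martingale estimate, yet its $L^2$-norm is governed by $MS_{n-1}$, whose operator norm grows like $2^{n/2}$, so it is \emph{not} a priori uniform in $n$. This is precisely what the last condition in [M5] is designed to cure: assuming $P_n\widetilde b_j=\widetilde b_j$ for large $n$ (at the critical exponent $q=2$) freezes $S_{n-1}\widetilde b_j=\widetilde b_j$, so that $MS_{n-1}\widetilde b_j=M\widetilde b_j$ becomes a fixed element of $L^2(D)^6$ lying in the finite-dimensional, hence admissible, range of the projections. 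Securing uniform control of this single term — together with the matching nonlinear Itô-correction through $F''$ — is the crux on which the whole strong-solution estimate rests; everything else reduces to careful but routine Young and Hölder bookkeeping anchored to the a priori estimate of Proposition \ref{stochastic_maxwell_transformation_approx_equation_solve}.
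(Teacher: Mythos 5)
Your proposal follows essentially the same route as the paper: Itô formula for $\|\Lambda_n(t)\|^2_{L^2(D)^6}$ via the semimartingale decomposition of Lemma \ref{stochastic_maxwell_transformation_lambda_ito_process}, cancellation of the $M$-term and the cross-product terms by skew-symmetry, discarding $-\re\langle\Lambda_n,F'(y_n)\Lambda_n\rangle\le 0$ by monotonicity, H\"older with exactly the exponents $\tfrac{4(q+2)}{4-q}$ and $\tfrac{2(q+2)}{2-q}$ dictated by $[\operatorname{M5}]$, Burkholder--Davis--Gundy for the martingale part, the same treatment of $\Lambda_n(0)$, and Gr\"onwall. The estimates and the anchoring to Proposition \ref{stochastic_maxwell_transformation_approx_equation_solve} all match.

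One point in your closing discussion is off, and it leaves a small hole for $q\in(1,2)$. You identify $P_nMS_{n-1}\widetilde b_j$ as the crux and propose to tame it via the condition $P_n\widetilde b_j=\widetilde b_j$ from $[\operatorname{M5}]$ --- but that condition is only assumed in the case $q=2$, so your argument as written gives no uniform-in-$n$ control of this term when $q\in(1,2)$. The control is in fact available for all $q$ without that hypothesis: by Lemma \ref{stochastic_maxwell_commutation_of_M_and_S_n} and Proposition \ref{stochastic_maxwell_spectral_multiplier_l2_properties} one has $\|MS_{n-1}\widetilde b_j\|_{L^2(D)^6}=\|S_{n-1}M\widetilde b_j\|_{L^2(D)^6}\le\|M\widetilde b_j\|_{L^2(D)^6}$, i.e.\ the same commutation-plus-contractivity device you already use for $P_nMS_{n-1}u_0$ (modulo the standing requirement that $\widetilde b_j\in D(M)$, which is how the paper treats it). The actual purpose of the extra condition in $[\operatorname{M5}]$ is different: at the endpoint $q=2$ the H\"older step for $F'(y_n)(S_{n-1}\widetilde b_j)$ requires $\|S_{n-1}\widetilde b_j\|_{L^\infty}$ uniformly in $n$, and since $S_{n-1}$ is \emph{not} uniformly bounded on $L^\infty(D)^6$, one needs $S_{n-1}\widetilde b_j=\widetilde b_j$ to pass to $\|\widetilde b_j\|_{L^\infty}$. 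With that correction your argument is complete.
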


\begin{proof}
	At first, we calculate $ \|\Lambda_n(t)\|^2_{L^2(D)^6} $ with the It\^o formula from Lemma \ref{stochastic_maxwell_ito_formula}.
%	 \ref{stochastic_maxwell_transformation_approx_equation_solve}. 
	 We obtain
	\begin{align*}
	\|\Lambda_n&(t)\|^2_{L^{2}(D)^6}-\|\Lambda_n(0)\|^2_{L^{2}(D)^6}\\
	=&2\int_{0}^{t}\re\Big\langle\Lambda_n(s),-F'(y_n(s))(\Lambda_n(s))+\sum_{j=1}^{N}\Big(i\beta_j(s)\binom{\nabla B_j\times \Lambda_{n,2}(s)}{-\nabla B_j\times \Lambda_{n,1}(s)}+B_j^{2}\Lambda_n(s)\Big)\\
	&\ \ \ \ \ -\frac{1}{2}\sum_{j=1}^{N}B_j^2\Big(\sum_{k=1}^{N}-ib_k(s)B_k+J(s)\Big)\expb\\ &\ \ \ +\Big(\sum_{k=1}^{N}-i\partial_tb_k(s)B_k+\partial_tJ(s)\Big)\expbt\\
	&\ \ \ \ \ -\frac{1}{2}\sum_{j=1}^{N}F''(y_n)(S_{n-1}\widetilde{b}_j(s),S_{n-1}\widetilde{b}_j(s))\Big\rangle_{L^{2}(D)^6}\operatorname{ds}\\
	&\ \ \ \ \ +\int_{0}^{t}\Big\|MS_{n-1}\widetilde{b_j}(s)-F'(y_n)(S_{n-1}\widetilde{b_j}(s))+\sum_{k=1}^{N}i\beta_k(s)\binom{\nabla B_k\times S_{n-1}\widetilde{b}_{j,2}(s)}{-\nabla B_k\times S_{n-1}\widetilde{b}_{j,1}(s)}\\
	&\ \ \ \ \ +\sum_{k=1}^{N}B_k^{2}S_{n-1}\widetilde{b_j}(s)+i\binom{\nabla B_j\times y_{n,2}(s)}{-\nabla B_j\times y_{n,1}(s)}\\
	&\ \ \ \ \ -iB_j\Big(\sum_{k=1}^{N}-ib_k(s)B_k+J(s)\Big)\expb\Big\|_{L^2(D)^6}^2\operatorname{ds}\\
	&+2\sum_{j=1}^{N}\int_{0}^{t}\re\Big\langle\Lambda_n,MS_n\widetilde{b_j}(s)-F'(y_n)(S_{n-1}\widetilde{b_j}(s))+\sum_{k=1}^{N}i\beta_k(s)\binom{\nabla B_k\times S_{n-1}\widetilde{b}_{j,2}(s)}{-\nabla B_k\times S_{n-1}\widetilde{b}_{j,1}(s)}\\
	&\ \ \ \ \ +\sum_{k=1}^{N}B_k^{2}S_{n-1}\widetilde{b_j}(s)+i\binom{\nabla B_j\times y_{n,2}(s)}{-\nabla B_j\times y_{n,1}(s)}\\
	&\ \ \ \ \ -iB_j\Big(\sum_{k=1}^{N}-ib_k(s)B_k+J(s)\Big)\expb \rangle_{L^{2}(D)^6}d\beta_j(s).
	\end{align*} 
	As we have seen before in the proof of Proposition \ref{stochastic_maxwell_transformation_approx_equation_solve}, 
	the term $$ \re\big\langle \Lambda_n(s),\sum_{j=1}^{N}i\beta_j(s)\binom{\nabla B_j\times \Lambda_{n,2}(s)}{-\nabla B_j\times \Lambda_{n,1}(s)}\big\rangle_{L^{2}(D)^6} $$ vanishes. Moreover, by Lemma \ref{stochastic_maxwell_nonlinearity_properties}, we have $$ \re\langle \Lambda_n,F(y_n(s))'\Lambda_n(s)\rangle_{L^{2}(D)^6}\leq 0  $$
	almost surely for every $ s\in [0,T] $ and we can drop this term in an upper estimate because of the sign. We split this expression into the deterministic integral $ I_{\operatorname{det}} $ and the stochastic integral $ I_{\operatorname{stoch}}. $
%	This leads to 
%		\begin{align*}
%			\|\Lambda_n&(t)\|^2_{L^{2}(D)^6}-\|\Lambda_n(0)\|^2_{L^{2}(D)^6}\\
%			=&\int_{0}^{t}\re\Big\langle\Lambda_n(s),B_j^{2}\Lambda_n(s)-\frac{1}{2}\sum_{j=1}^{N}B_j^2\Big(\sum_{k=1}^{N}-ib_k(s)B_k+J(s)\Big)\expb\\ &+\Big(\sum_{k=1}^{N}-i\partial_tb_k(s)B_k+\partial_tJ(s)\Big)\expb\\
%			&-\frac{1}{2}\sum_{j=1}^{N}F''(y_n)(S_{n-1}\widetilde{b}_j(s),S_{n-1}\widetilde{b}_j(s)\Big\rangle_{L^{2}(D)^6}\operatorname{ds}\\
%			&+\sum_{j=1}^{N}\int_{0}^{t}\|MS_n\widetilde{b_j}(s)-F'(y_n)(S_{n-1}\widetilde{b_j}(s))+\sum_{k=1}^{N}i\beta_k(s)\binom{\nabla B_k\times S_{n-1}\widetilde{b}_{j,2}(s)}{-\nabla B_k\times S_{n-1}\widetilde{b}_{j,1}(s)}\\
%			&+\sum_{k=1}^{N}B_k^{2}S_{n-1}\widetilde{b_j}(s)+i\binom{\nabla B_j\times y_{n,2}(s)}{-\nabla B_j\times y_{n,1}(s)}\\
%			&-iB_j\Big(\sum_{k=1}^{N}-ib_k(s)B_k+J(s)\Big)\expb\|_{L^2(D)^6}^2\operatorname{ds}\\
%			&+\sum_{j=1}^{N}\int_{0}^{t}\re\Big\langle\Lambda_n(s),MS_n\widetilde{b_j}(s)-F'(y_n)(S_{n-1}\widetilde{b_j}(s))+\sum_{k=1}^{N}i\beta_k(s)\binom{\nabla B_k\times S_{n-1}\widetilde{b}_{j,2}(s)}{-\nabla B_k\times S_{n-1}\widetilde{b}_{j,1}(s)}\\
%			&+\sum_{k=1}^{N}B_k^{2}S_{n-1}\widetilde{b_j}(s)+i\binom{\nabla B_j\times y_{n,2}(s)}{-\nabla B_j\times y_{n,1}(s)}\\
%			&-iB_j\Big(\sum_{k=1}^{N}-ib_k(s)B_k+J(s)\Big)\expb \rangle_{L^{2}(D)^6}d\beta_j(s)\\
%			=:&I_{\operatorname{det}}(t)+I_{\operatorname{stoch}}(t).
%		\end{align*}

	We take the supremum over time and afterwards the expectation value and we aim to control the left hand side with Gronwall. We start with an estimate for the deterministic integral $ I_{\operatorname{det}} $. Using Cauchy-Schwartz and the assumptions on $ B_j $, $ \nabla B_j $ $ \partial_tb_j, J $ and $ \partial_tJ $ from $ [\operatorname{M4}]-[\operatorname{M6}] $, we get
		\begin{align*}
\E\sup_{s\in [0,t]}|&I_{\operatorname{det}}(s)|\\
\lesssim & \int_{0}^{t}\|\Lambda_n(r)\|_{L^{2}(D)^6}^2+ \sum_{j=1}^{N}\|\Lambda_n(r)\|_{L^{2}(D)^6}\|F''(y_n)(S_{n-1}\widetilde{b}_j(r),S_{n-1}\widetilde{b}_j(r)\|_{L^{2}(D)^6}\\
&+\big\|MS_n\widetilde{b_j}(r)\|_{L^{2}(D)^6}^2+\|F'(y_n(r))(S_{n-1}\widetilde{b_j}(r))\|_{L^{2}(D)^6}^2		+\sum_{k=1}^{N}\|\beta_k(r)S_{n-1}\widetilde{b}_j(r)\|_{L^{2}(D)^6}^2\\
&+\sum_{k=1}^{N}\|S_{n-1}\widetilde{b_j}(r)\|_{L^{2}(D)}^2+\|y_n(r)\|_{L^{2}(D)}^2\operatorname{dr}
		\end{align*}
The growth estimates for $ F' $ and $ F'' $ from Lemma \ref{stochastic_maxwell_nonlinearity_properties} together with the uniform boundedness of $ S_{n-1} $ on $ L^{2}(D)^6 $ yield
			\begin{align*}
			\E\sup_{s\in [0,t]}|I_{\operatorname{det}}(s)|\lesssim & \int_{0}^{t}\|\Lambda_n(r)\|_{L^{2}(D)^6}^2+ \sum_{j=1}^{N}\||y_n|^{q-1}|S_{n-1}\widetilde{b}_j|^2\|_{L^{2}(D)^6}^2+\|M\widetilde{b_j}\|^2_{L^{2}(D)^6}+\|\widetilde{b_j}\|_{L^{2}(D)}^2\\
			&+\||y_n|^qS_{n-1}\widetilde{b_j}\|^2_{L^{2}(D)^6}+\sum_{k=1}^{N}\beta_k(r)^2\|\widetilde{b}_j(r)\|_{L^{2}(D)^6}^2+\|y_n(r)\|_{L^{2}(D)}^2\operatorname{dr}
			\end{align*}
In the following estimate, we have to distinguish the cases $ q\in (1,2) $ and $ q=2. $ We start with the first one. H\"older's inequality, the fact $ \beta_k\in L^{\alpha}(\Omega;C(0,T)) $ for every $ \alpha\in [2,\infty) $ and the boundedness of $ S_{n-1} $ on $ L^{p}(D)^6 $ for every $ p\in (1,\infty) $ with norm independent of $ n $ yield
	\begin{align*}
			\E\sup_{s\in[0,t]}|I_{\operatorname{det}}(t)|\lesssim & \int_{0}^{t}\E\sup_{r\in[0,s]}\|\Lambda_n(r)\|_{L^{2}(D)^6}^2\operatorname{ds}+ \|y_n\|_{L^{q+2}(\Omega\times[0,T]\times D)^6}^{2(q-1)}\|\widetilde{b}_j\|_{L^{\frac{4(q+2)}{4-q}}(\Omega\times[0,T]\times D)^6}^4\\
			&+\|M\widetilde{b_j}\|^2_{L^{2}(\Omega\times[0,T]\times D)^6}+\|y_n\|^{2q}_{L^{q+2}(\Omega\times[0,T]\times D)^6}\|\widetilde{b_j}\|_{L^{\frac{2(q+2)}{2-q}}(\Omega\times[0,T]\times D)^6}^2\\
				&+\|\widetilde{b}_j(s)\|_{L^{2+\varepsilon}(\Omega;L^{2}([0,T]\times D))^6}^2+\|\widetilde{b_j}\|_{L^{2}(\Omega\times [0,T]\times D)}^2+\|y_n(s)\|_{L^{2}(\Omega\times [0,T]\times D)}^2.
			\end{align*}
			for any $ \varepsilon >0. $	In the case $ q=2, $ the same argument yields
				\begin{align*}
				\E\sup_{s\in[0,t]}|I_{\operatorname{det}}(t)|\lesssim & \int_{0}^{t}\E\sup_{r\in[0,s]}\|\Lambda_n(r)\|_{L^{2}(D)^6}^2\operatorname{ds}+ \|y_n\|_{L^{4}(\Omega\times[0,T]\times D)^6}^{2}\|\widetilde{b}_j\|_{L^{8}(\Omega\times[0,T]\times D)^6}^4\\
				&+\|M\widetilde{b_j}\|^2_{L^{2}(\Omega\times[0,T]\times D)^6}+\|y_n\|^{4}_{L^{4}(\Omega\times[0,T]\times D)^6}\|S_{n-1}\widetilde{b_j}\|_{L^{\infty}(\Omega\times[0,T]\times D)^6}^2\\
				&+\|\widetilde{b}_j\|_{L^{2+\varepsilon}(\Omega;L^{2}([0,T]\times D))^6}^2+\|\widetilde{b_j}\|_{L^{2}(\Omega\times [0,T]\times D)}^2+\|y_n\|_{L^{2}(\Omega\times[0,T]\times D)}^2.
				\end{align*}
				for any $ \varepsilon>0. $ 
%				Unfortunately, it is impossible to show boundedness of $ S_{n-1} $ in $ L^{\infty}(D)^6. $ 
%				This can be seen best in the case $ D=\R^{3} $, where the $ S_n $ are Fourier multipliers and the of $ S_n $ would imply the boundedness of the Hilbert transform on $ L^{\infty}(\R^{3}). $ 
				At this point, we need the requirement $ S_{n-1}\widetilde{b}_j=\widetilde{b_j} $ for large enough $ n $ from $ [\operatorname{M5}]. $

Note, that we already bounded $ \|y_n\|_{L^{q+2}(\Omega\times[0,T]\times D)^6} $ and $ \|y_n\|_{L^{2}(\Omega\times[0,T]\times D)^6} $ in Proposition \ref{stochastic_maxwell_transformation_approx_equation_solve} uniformly in $ n $. Hence,we can conclude
\begin{align*}
\E\sup_{s\in[0,t]}|I_{\operatorname{det}}(s)|\lesssim &  1+\int_{0}^{t}\E\sup_{r\in[0,s]}\|\Lambda_n(r)\|_{L^{2}(D)^6}^2\operatorname{ds}
\end{align*}
and the estimate only depends on $ B_j,b_j $ and $ J $ but not on $ n\in\N. $ The stochastic term $ I_{\operatorname{stoch}} $ can be controlled in the same way as in the proof of Proposition \ref{stochastic_maxwell_transformation_approx_equation_solve} with the Burkholder-Davies-Gundy inequality and the assumptions on $ B_j,b_j $ and $ J $ together with the growth estimates for $ F' $ and $ F''. $ Thus, we end up with
\begin{align*}
\E\sup_{s\in[0,t]}\|\Lambda_n(s)\|_{L^{2}(D)}^2\lesssim &  \ 1+\E\|\Lambda_n(0)\|_{L^{2}(D)}^2+\int_{0}^{t}\E\sup_{r\in[0,s]}\|\Lambda_n(r)\|_{L^{2}(D)^6}^2\operatorname{ds}.
\end{align*}
	It remains to bound 
	$$ \Lambda_n(0)=P_nMS_nu_0-P_nF(S_nu_0)+P_n\sum_{j=1}^{N}B_j^2S_nu_0-P_n\sum_{j=1}^{N}ib_j(0)B_j+P_nJ(0) $$ 
	in $ L^{2}(\Omega\times D)^6 $ independent of $ n\in\N .$ Since both $ b_j$ and $ J $ are in $ L^{2}(\Omega;W^{1,2}(0,T;L^{2}(D)))^6, $ the corresponding initial data $ b_j(0) $ and $ J(0) $ is contained in $ L^{2}(\Omega\times D)^6. $ As a consequence, the uniform boundedness of $ S_n $ on $ L^{p}(D)^6 $ for every $ p\in (1,\infty) $ and of $ P_n $ on $ L^{2}(D)^6 $ yield
	\begin{align*}
	\E\sup_{s\in [0,T]}\|\Lambda_n(s)\|^2_{L^{2}(D)^6}\lesssim&\ 1+\E\|MS_nu_0\|^2_{L^{2}(D)^6}+\E\||S_nu_0|^qS_nu_0\|^2_{L^{2}(D)^6}\\
	&+\sum_{j=1}^{N}\|B_j\|_{L^{\infty}(D)}^2\|S_nu_0\|_{L^{2}(D)^6}^2\\
	\lesssim &\ 1+\E\|Mu_0\|^2_{L^{2}(D)^6}+\E\|u_0\|_{_{L^{2(q+1)}(D)^6}}^{2(q+1)}+\E\|u_0\|_{L^{2}(D)^6}^2.
	\end{align*}
	Finally, an application of Gronwall's Lemma closes the proof.
\end{proof}
%\begin{Proposition}
%	There exists a constant $ C>0 $ independent of $ n\in\N $, such solution $ y_n $ of \eqref{stochastic_maxwell_tsee_aprox} satisfies 
%	\begin{itemize}
%		\item[i)] $ \|y_n\|_{L^{2}(\Omega;L^{\infty}(0,T;L^{2}(D)))}\leq C $
%		\item[ii)] $ \|M_ny_n\|_{L^{\frac{q+2}{q+1}}(\Omega\times [0,T]\times D)}\leq C $
%		\item[iii)] $ \|F_n(y_n)\|_{L^{\frac{q+2}{q+1}}(\Omega\times [0,T]\times D)}\leq C $
%		\item[iv)] $ \|\beta_j\nabla B_j\times y_{n,i}\|_{L^{\frac{q+2}{q+1}}(\Omega\times [0,T]\times D)}\leq C $
%	\end{itemize}
%	for all $ i=1,2 $ and $ j=1,\dots,N $ and for all $ n\in\N. $
%\end{Proposition}
In Proposition \ref{stochastic_maxwell_transformation_approx_equation_solve} and \ref{stochastic_maxwell_transformation_approx_equation_energy}, we derived uniform estimates for $ y_n$ and $ \Lambda_n. $ As a consequence, we also get the uniform boundedness of $ F(y_n) $, since
\begin{align*}
\|F(y_n)\|_{L^{\frac{q+2}{q+1}}(\Omega\times[0,T]\times D)}&\lesssim \||y_n|^{q+1}\|_{L^{\frac{q+2}{q+1}}(\Omega\times [0,T]\times D)^6}=\|y_n\|^{q+1}_{L^{q+2}(\Omega\times[0,T]\times D)^6}
\end{align*}
Hence, by Banach-Alaoglu, there exists processes $ y\in L^{2}(\Omega;L^{\infty}(0,T;L^2(D)))^6  $, $ N\in L^{\frac{q+2}{q+1}}(\Omega\times[0,T]\times D)^6 $, $ \Lambda\in L^{2}(\Omega;L^{\infty}(0,T;L^{2}(D)))^6 $ and subsequences, still indexed with $ n $, such that
\begin{itemize}
	\item[a)] $ y_n\to y $ for $ n\to\infty $ in the $ \operatorname{weak}^{*} $ sense in $ L^{2}(\Omega;L^{\infty}(0,T;L^2(D)))^6. $
	\item[b)] $ y_n\to y $ for $ n\to\infty $ in the weak sense in $ L^{2}(\Omega\times[0,T]\times D)^6. $
	\item [c)] $ F(y_n)\to N $ for $ n\to\infty $ in the weak sense in $ L^{\frac{q+2}{q+1}}(\Omega\times[0,T]\times D)^6. $
	\item[d)] $ \Lambda_n\to \Lambda $ for $ n\to\infty $ in the $ \operatorname{weak}$ sense in $ L^{2}(\Omega\times[0,T]\times D)^6.$
		\item[e)] $ \Lambda_n\to \Lambda $ for $ n\to\infty $ in the $ \operatorname{weak}^{*} $ sense in $ L^{2}(\Omega;L^{\infty}(0,T;L^2(D)))^6. $
\end{itemize}
In the next Lemma, we show that $ \Lambda $ has the correct form.
\begin{Lemma}\label{stochastic_maxwell_y_in_domain_M}
The process $ y:\Omega\times[0,T]\to L^{2}(D)^6 $ additionally satisfies $ y(\omega,t)\times \nu=0 $ on $ \partial D $ for almost all $ \omega\in\Omega $ and $ t\in [0,T] $. Moreover, we have
 $$ My+\sum_{j=1}^{N}i\beta_j\binom{\nabla B_j\times y_2}{-\nabla B_j\times y_1}\in L^{2}(\Omega;L^{\infty}(0,T;L^{2}(D)))^6+L^{\frac{q+2}{q+1}}(\Omega\times[0,T]\times D)^6, $$
and the identity
\begin{align*}
 \Lambda=My-N+\sum_{j=1}^{N}B_j^2y+i\beta_j\binom{\nabla B_j\times y_2}{-\nabla B_j\times y_1}+\widetilde{J}.
\end{align*}
holds.
\end{Lemma}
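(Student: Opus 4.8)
The plan is to read the claimed identity backwards and set
\[
z := \Lambda + N - \sum_{j=1}^N B_j^2 y - \sum_{j=1}^N i\beta_j\binom{\nabla B_j\times y_2}{-\nabla B_j\times y_1} - \widetilde J ,
\]
so that all three assertions reduce to the single statement that $My=z$ holds in the sense of distributions together with the tangential boundary condition for $y_1$; the formula for $\Lambda$ is then just a rearrangement of $My=z$, and the membership follows by collecting regularities. First I would check that, for almost all $(\omega,t)$, $z$ lies in $L^2(D)^6+L^{\frac{q+2}{q+1}}(D)^6$: the terms $\Lambda$, $\sum_j B_j^2 y$ and $\widetilde J$ belong to $L^2(\Omega;L^\infty(0,T;L^2(D)))^6$ (for $\widetilde J$ using $[\operatorname{M4}]$, $[\operatorname{M5}]$ and $W^{1,2}(0,T)\hookrightarrow L^\infty(0,T)$), the cross term is in the same space since $B_j\in W^{1,\infty}(D)$, and $N\in L^{\frac{q+2}{q+1}}(\Omega\times[0,T]\times D)^6$.

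The heart of the matter is to turn the Galerkin equation into a distributional statement for $My$ even though $P_n$ is not uniformly $L^p$-bounded. Fix $\phi\in\bigcup_m R(P_m)$, so that $P_n\phi=\phi$ for all $n$ large. Since $R(P_n)\subset D(M)$, $P_n$ is self-adjoint on $L^2(D)^6$ and commutes with $M$ (Proposition \ref{stochastic_maxwell_spectral_multiplier_l2_properties} and Lemma \ref{stochastic_maxwell_commutation_of_M_and_S_n}), the skew-adjointness from Proposition \ref{stochastic_maxwell_maxwell_skew} gives
\[
\langle P_nMy_n,\phi\rangle_{L^2(D)^6} = \langle My_n,P_n\phi\rangle_{L^2(D)^6} = -\langle y_n,MP_n\phi\rangle_{L^2(D)^6} = -\langle y_n,M\phi\rangle_{L^2(D)^6},
\]
which crucially keeps $P_n$ off the nonlinearity. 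On the other hand, rearranging the definition of $\Lambda_n$ and again using $P_n\phi=\phi$, each of the remaining pairings $\langle P_nF(y_n),\phi\rangle=\langle F(y_n),\phi\rangle$, $\langle P_nB_j^2y_n,\phi\rangle=\langle y_n,B_j^2\phi\rangle$, the cross term and $\langle P_n\widetilde J,\phi\rangle$ passes to its weak limit. Testing against $\rho\in L^\infty(\Omega\times[0,T])$, taking $\E\int_0^T(\cdot)\operatorname{dt}$, invoking the weak and weak$^*$ convergences a)--e), and letting $\rho$ vary, I obtain for almost all $(\omega,t)$
\[
\langle z,\phi\rangle_{L^2(D)^6} = -\langle y,M\phi\rangle_{L^2(D)^6}\qquad\text{for all }\phi\in\textstyle\bigcup_m R(P_m).
\]

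Next I would extend this to all admissible smooth test functions. Both sides are continuous in $\phi$ for the norm $\|\phi\|_{L^2(D)^6}+\|M\phi\|_{L^2(D)^6}+\|\phi\|_{L^{q+2}(D)^6}$ (the left using $z\in L^2+L^{\frac{q+2}{q+1}}$, the right using $y\in L^2$). Moreover, for any $\phi\in D(M)\cap L^{q+2}(D)^6$ the approximants $S_k\phi$ converge to $\phi$ in all three norms simultaneously --- in $L^{q+2}(D)^6$ by Lemma \ref{stochastic_maxwell_properties_galerkin_uniformly_in_n}, and in the graph norm because $MS_k\phi=S_kM\phi$ by Lemma \ref{stochastic_maxwell_commutation_of_M_and_S_n} together with $S_k\to I$ strongly on $L^2(D)^6$ --- and $S_k\phi\in R(P_{k+1})\subset\bigcup_m R(P_m)$ by Proposition \ref{stochastic_maxwell_spectral_multiplier_l2_properties} v). Hence the identity holds for every $\phi\in C^\infty(\overline D)^6\cap L^{q+2}(D)^6$ with $M\phi\in L^2(D)^6$ and $\phi_1\times\nu=0$ on $\partial D$, which is exactly the hypothesis of Lemma \ref{maxwell_stochastic_lemma_distributional_maxwell_operator} with $p=\frac{q+2}{q+1}$. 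That lemma yields $My=z$ in the sense of distributions and $y_1\times\nu=0$ on $\partial D$. Rearranging $My=z$ produces the claimed expression for $\Lambda$, and $My+\sum_{j=1}^N i\beta_j\binom{\nabla B_j\times y_2}{-\nabla B_j\times y_1}=\Lambda+N-\sum_{j=1}^N B_j^2y-\widetilde J$ lies in $L^2(\Omega;L^\infty(0,T;L^2(D)))^6+L^{\frac{q+2}{q+1}}(\Omega\times[0,T]\times D)^6$ by the regularities collected in the first paragraph.

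I expect the main obstacle to be precisely the absence of a uniform $L^p$-bound for $P_n$: one cannot move $P_n$ onto a smooth test function in the $F(y_n)$ term and pass to the limit directly, so the weak identity must first be established on the restricted class $\bigcup_m R(P_m)$, where $P_n\phi=\phi$ neutralizes this difficulty, and only afterwards lifted to smooth boundary-adapted test functions through the simultaneous approximation by $S_k$. The distributional Maxwell operator Lemma \ref{maxwell_stochastic_lemma_distributional_maxwell_operator} then does the final work of producing the equation $My=z$ and the tangential boundary condition at once.
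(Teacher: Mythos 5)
Your proposal is correct and follows essentially the same route as the paper: test the Galerkin identity against $\phi\in\bigcup_m R(P_m)$ so that $P_n\phi=\phi$ neutralizes the lack of a uniform $L^p$-bound for $P_n$, pass to the weak limits a)--e), extend by density of $\bigcup_m R(P_m)$ in $D(M)\cap L^{q+2}(D)^6$, and conclude with Lemma \ref{maxwell_stochastic_lemma_distributional_maxwell_operator}. Your explicit remark that the $S_k\phi$ approximate $\phi$ simultaneously in the graph norm of $M$ and in $L^{q+2}(D)^6$ is a welcome clarification of a point the paper leaves implicit.
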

\begin{proof}
Let $ \phi:\Omega\times [0,T]\to \cup_{n=1}^{\infty}R(P_n) $ be a simple function. By weak convergence, we obtain
\begin{align*}
-\langle &y,M\phi\rangle_{L^{2}(\Omega\times[0,T]\times D)^6}\\
&=\lim_{n\to\infty}\langle y_n,M\phi\rangle_{L^{2}(\Omega\times[0,T]\times D)^6}\\
&=\lim_{n\to\infty}\big\langle \Lambda_n+P_nF(y_n)-P_n\widetilde{J}-P_n\sum_{j=1}^{N}B_jy_n-P_n\sum_{j=1}^{N}i\beta_j\binom{\nabla B_j\times y_{n,2}}{-\nabla B_j\times y_{n,1}},\phi\big\rangle_{L^{2}(\Omega\times[0,T]\times D)^6}\\
&=\big\langle \Lambda+N-\widetilde{J}-\sum_{j=1}^{N}B_jy-\sum_{j=1}^{N}i\beta_j\binom{\nabla B_j\times y_{2}}{-\nabla B_j\times y_{1}},\phi\big\rangle_{L^{2}(\Omega\times[0,T]\times D)^6}\\
\end{align*}
Here, we could drop the $ P_n, $ since $ P_n\phi=\phi $ for large enough $ n. $ By density of simple functions and by the density of $ \cup_{n=1}^{\infty}R(P_n) $ in $ D(M) $ and in $ L^{p}(D)^6 $ for every $ p\in (1,\infty) $ (see Corollary \ref{stochastic_maxwell_density}), we get
\[ -\langle y(t),M\psi\rangle_{L^{2}(D)^6}=\big\langle \Lambda(t)+N(t)-\widetilde{J}(t)-\sum_{j=1}^{N}B_jy(t)-\sum_{j=1}^{N}i\beta_j(t)\binom{\nabla B_j\times y_{2}(t)}{-\nabla B_j\times y_{1}(t)},\psi\big\rangle_{L^{2}( D)^6} \]
almost surely for almost every $ t\in [0,T] $ and for every  $ \psi\in D(M)\cap L^{q+2}(D)^6. $ By Lemma \ref{maxwell_stochastic_lemma_distributional_maxwell_operator}, this implies $ y_1(\omega,t)\times\nu =  0 $ on $ \partial D $ almost surely for almost every $ t\in [0,T] $ and 
\[ My=\Lambda+N-\widetilde{J}-\sum_{j=1}^{N}B_jy-\sum_{j=1}^{N}i\beta_j\binom{\nabla B_j\times y_{2}}{-\nabla B_j\times y_{1}}. \]
This identity also gives the claimed regularity result.
\end{proof}
 Consequently,we pass to the limit weakly in \eqref{maxwell_nonlinear_galerkin_approx_equation} and obtain
 \begin{equation}\label{maxwell_nonlinear_identify_N}
 \begin{cases}
 dy(t)&=[My(t)-N(t)+A(t)y(t)+\widetilde{J}(t)]\operatorname{dt}+\sum_{i=1}^{N}\widetilde{b}_i(t)\ d\beta_i(t),\\
 y_n(0)&=u_0.
 \end{cases}
 \end{equation}
 as an equation in $ L^{2}(\Omega;L^{\infty}(0,T;L^{2}(D)))^6. $ So far, we just showed $ y\in L^{2}(\Omega;L^\infty(0,T;L^{2}(D)^6)). $ However, Lemma \ref{stochastic_maxwell_ito_formula} implies pathwise continuity of $ t\mapsto y(t)\in L^{2}(D)^6 $.
 
 It remains to show $ N(t)=F(y(t)). $ But this proof is step by step the same as in Proposition \ref{stochastic_maxwell_gzero_unique_weak_solution} and uses the monotonicity of the deterministic part of the equation.
 
 All in all, we showed that $ y\in L^{q+2}(\Omega\times[0,T]\times D)^6\cap L^{2}(\Omega;C(0,T;L^{2}(D)^6)) $ solves
 \begin{equation}
 \begin{cases}
 dy(t)&=[My(t)-F(y(t))+A(t)y(t)+\widetilde{J}(t)]\operatorname{dt}+\sum_{i=1}^{N}\widetilde{b}_i(t)\ d\beta_i(t),\\
 y_n(0)&=u_0.
 \end{cases}
 \end{equation}
 as an equation in $ L^{2}(\Omega;L^{\infty}([0,T];L^{2}(D)))^6. $ Transforming the equation backwards with Proposition \ref{stochastic_maxwell_transformation}, we get the following result.
 
 \begin{Proposition}\label{stochastic_maxwell_solution_without_retarded}
$ (\operatorname{MSEE}) $ with $ G\equiv 0 $ has a unique strong solution $ u $ satisfying with
\begin{align*}
u\in L^{q+2}(\Omega\times[0,T]\times D)^6\cap L^{2}(\Omega;C(0,T;L^2(D)))^6
\end{align*}
and
\[ Mu\in L^{\frac{q+2}{q+1}}(\Omega\times[0,T]\times D)^6+L^{2}(\Omega;L^{\infty} (0,T;L^{2}(D)))^6.  \]
 \end{Proposition}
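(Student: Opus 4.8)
The plan is to obtain the Proposition as the synthesis of the transformation Proposition~\ref{stochastic_maxwell_transformation} with the a priori analysis of the Galerkin scheme carried out above, so that almost no new computation is needed. \textbf{Existence.} The preceding analysis already produced a process $y\in L^{q+2}(\Omega\times[0,T]\times D)^6\cap L^{2}(\Omega;C(0,T;L^{2}(D)^6))$ solving $(\operatorname{TSEE})$ which, by Lemma~\ref{stochastic_maxwell_y_in_domain_M}, satisfies $My+\sum_{j=1}^{N} i\beta_j\binom{\nabla B_j\times y_2}{-\nabla B_j\times y_1}\in L^{2}(\Omega;L^{\infty}(0,T;L^{2}(D)))^6+L^{\frac{q+2}{q+1}}(\Omega\times[0,T]\times D)^6$; these are exactly conditions (i)--(ii) of Proposition~\ref{stochastic_maxwell_transformation}. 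I would therefore set $u(t):=\expbbt y(t)$ and invoke the backward direction of Proposition~\ref{stochastic_maxwell_transformation} to conclude that $u$ is a strong solution of $(\operatorname{MSEE})$ with $G\equiv0$.

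\textbf{Regularity transfer.} To move the integrability from $y$ to $u$ I would exploit that the factor $\expbbt$ is unimodular and, by [M6], spatially $W^{1,\infty}$. Pointwise $|u(t,x)|=|y(t,x)|$, so the $L^{q+2}$- and $C(0,T;L^{2})$-regularity pass verbatim to $u$; continuity in time uses in addition that $t\mapsto\expbbt$ is continuous on $L^{2}(D)^6$, which holds since the $\beta_j$ are continuous and the $B_j$ are bounded. For $Mu$ I would use the commutation identity obtained inside the proof of Proposition~\ref{stochastic_maxwell_transformation}, namely
\[
Mu(t)=\expbbt\Big(My(t)+\sum_{j=1}^{N} i\beta_j(t)\binom{\nabla B_j\times y_2(t)}{-\nabla B_j\times y_1(t)}\Big),
\]
whose right-hand side belongs to $L^{2}(\Omega;L^{\infty}(0,T;L^{2}(D)))^6+L^{\frac{q+2}{q+1}}(\Omega\times[0,T]\times D)^6$ by Lemma~\ref{stochastic_maxwell_y_in_domain_M}. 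Multiplication by the bounded unimodular factor preserves each of these spaces, so $Mu$ inherits the claimed regularity, and the boundary condition $u_1\times\nu=0$ follows from $y_1\times\nu=0$.

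\textbf{Uniqueness.} Given two strong solutions $u_1,u_2$, the forward direction of Proposition~\ref{stochastic_maxwell_transformation} yields solutions $y_1,y_2$ of $(\operatorname{TSEE})$ in the same class. Crucially, the additive noise $\sum_i\widetilde b_i\,d\beta_i$ does not depend on the unknown, so it cancels in the difference and $y_1-y_2$ solves a (random) equation carrying no stochastic integral. I would then apply the It\^o formula of Lemma~\ref{stochastic_maxwell_ito_formula} to $y_1-y_2$: the skew contribution of $M$ drops, the cross-product part of $A$ has vanishing real part exactly as in Proposition~\ref{stochastic_maxwell_transformation_approx_equation_solve}, the monotonicity $\re\langle y_1-y_2,F(y_1)-F(y_2)\rangle_{L^2(D)^6}\ge0$ from Lemma~\ref{stochastic_maxwell_properties_nonlinearity} lets me discard the nonlinear term with the favourable sign, and the surviving $\tfrac12\sum_j B_j^2$ term is bounded by $\sup_j\|B_j\|_{L^{\infty}(D)}^2\,\|y_1-y_2\|_{L^{2}(D)^6}^2$. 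Gronwall's lemma then forces $y_1=y_2$, hence $u_1=u_2$.

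\textbf{Main obstacle.} Because the decisive analytic input -- the uniform bound for $\Lambda_n$ in Proposition~\ref{stochastic_maxwell_transformation_approx_equation_energy} and the identification $N=F(y)$ of the nonlinear weak limit -- is already in place, the remaining difficulty is organizational rather than computational: one must verify that the backward transform moves \emph{all} of the regularity correctly. The delicate point is the extra curl-type term generated by commuting $M$ past the exponential, which is precisely what Lemma~\ref{stochastic_maxwell_y_in_domain_M} was designed to absorb; assumption [M6] is what guarantees that the unimodular multiplier remains in $W^{1,\infty}$ so that the relevant function spaces are genuinely preserved.
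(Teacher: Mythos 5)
Your proposal is correct and follows essentially the same route as the paper: the product rule commutes $M$ past the unimodular exponential, the extra curl term and condition (i)--(ii) of Proposition~\ref{stochastic_maxwell_transformation} are supplied by Lemma~\ref{stochastic_maxwell_y_in_domain_M} and the a priori bounds, and the backward transformation yields the strong solution. The only divergence is uniqueness, where the paper simply observes that a strong solution is in particular a weak solution and invokes the already established uniqueness of weak solutions (Proposition~\ref{stochastic_maxwell_gzero_unique_weak_solution}), whereas you re-run the monotonicity--Gronwall argument on the transformed equation; both are valid, the paper's being the shorter citation and yours being self-contained.
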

 \begin{proof}
 	The product rule yields $$ M(e^{i\sum_{j=1}^{N}B_j\beta_j}y)=e^{i\sum_{j=1}^{N}B_j\beta_j}My+ie^{i\sum_{j=1}^{N}B_j\beta_j}\sum_{j=1}^{N}\beta_j\binom{\nabla B\times y_2}{-\nabla B\times y_1} $$
 	and hence, we have 
 	$$ M(e^{i\sum_{j=1}^{N}B_j\beta_j}y)\in L^{\frac{q+2}{q+1}}(\Omega\times[0,T]\times D)^6+L^{2}(\Omega;L^{\infty} (0,T;L^{2}(D)))^6 $$ 
 	if and only if 
 	$$ My+i\sum_{j=1}^{N}\beta_j\binom{\nabla B\times y_2}{-\nabla B\times y_1}\in L^{\frac{q+2}{q+1}}(\Omega\times[0,T]\times D)^6+L^{2}(\Omega;L^{\infty} (0,T;L^{2}(D)))^6. $$
This holds true by \ref{maxwell_nonlinear_galerkin_approx_equation}. Consequently, we can apply Proposition \ref{stochastic_maxwell_transformation} and obtain a solution $ u $ of $ (\operatorname{MSEE}) $ with $ G\equiv 0. $
Uniqueness is immediate by Proposition \ref{stochastic_maxwell_gzero_unique_weak_solution}, since our solution is also a weak solution of the equation.
 \end{proof}
 Last but not least, we want to add the term $ (G\ast u). $ This leads to the main result of this article.
 \begin{Theorem}\label{stochastic_maxwell_main_result}
$ (\operatorname{MSEE}) $ has a unique solution $ u $ satisfying with
\begin{align*}
u\in L^{q+2}(\Omega\times[0,T]\times D)^6\cap L^{2}(\Omega;C(0,T;L^2(D)))^6
\end{align*}
and
\[ Mu\in L^{\frac{q+2}{q+1}}(\Omega\times[0,T]\times D)^6+ L^{2}(\Omega;L^{\infty}([0,T];L^{2}(D)))^6.   \]
 \end{Theorem}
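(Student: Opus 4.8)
The plan is to add the retarded term by a contraction argument on short time intervals, exactly parallel to the proof of Theorem \ref{stochastic_maxwell_unique_weak_solution}, but using Proposition \ref{stochastic_maxwell_solution_without_retarded} in place of Proposition \ref{stochastic_maxwell_gzero_unique_weak_solution} as the solver for the equation without convolution. Fix $T_0\in(0,T]$. For $v$ in the space $X:=L^{q+2}(\Omega\times[0,T_0]\times D)^6\cap L^{2}(\Omega;C(0,T_0;L^{2}(D)))^6$, I would freeze the history and let $Kv$ denote the strong solution of $(\operatorname{MSEE})$ with $G\equiv0$ but with the additional source $(G\ast v)$; this exists and is unique by Proposition \ref{stochastic_maxwell_solution_without_retarded}, provided $G\ast v$ meets assumption $[\operatorname{M4}]$. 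A fixed point of $K$ then solves $(\operatorname{MSEE})$ on $[0,T_0]$, and, being a genuine strong solution for each input, it automatically inherits the regularity $Mu\in L^{\frac{q+2}{q+1}}(\Omega\times[0,T_0]\times D)^6+L^{2}(\Omega;L^{\infty}(0,T_0;L^{2}(D)))^6$.

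The step that is genuinely new compared with the weak setting is checking that $G\ast v$ satisfies $[\operatorname{M4}]$, i.e.\ that it lies in $L^{2}(\Omega;W^{1,2}(0,T_0;L^{2}(D)))^6$ and is $\F$-adapted. Adaptedness is immediate since $G$ and $v$ are adapted. For the time regularity I would exploit $[\operatorname{M3}]$: because $G\in L^{\infty}(\Omega;W^{1,1}(0,T;B(L^{2}(D)^6)))$, the embedding $W^{1,1}\hookrightarrow C$ makes $G(0)$ a well-defined bounded operator, and one has
\[ \partial_t(G\ast v)(t)=G(0)v(t)+\int_{0}^{t}(\partial_t G)(t-s)v(s)\operatorname{ds}. \]
The first summand lies in $L^{2}(\Omega\times[0,T_0];L^{2}(D))^6$ since $v\in X$, and Young's convolution inequality in time bounds the second by $\|\partial_t G\|_{L^{1}(0,T;B(L^{2}(D)^6))}\|v\|_{L^{2}(0,T_0;L^{2}(D))^6}$, so $G\ast v$ indeed meets $[\operatorname{M4}]$. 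This is precisely where the stronger hypothesis $[\operatorname{M3}]$, as opposed to the merely integrable $[\operatorname{W3}]$ of the weak theory, is needed, and I expect this verification to be the main point of care.

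It then remains to show that $K$ is a contraction on $X$ for $T_0$ small and to iterate. Writing the equation for the difference $Kv-Kw$ and applying the It\^o formula of Lemma \ref{stochastic_maxwell_ito_formula} to $\|Kv-Kw\|_{L^{2}(D)^6}^2$, the additive noise $b_n$ cancels, the monotone nonlinearity contributes a term of favourable sign that I would drop by Lemma \ref{stochastic_maxwell_properties_nonlinearity}, the multiplicative part $iB_n(Kv-Kw)\operatorname{d}\beta_n$ is linear and is handled via $[\operatorname{M6}]$ together with the Burkholder-Davis-Gundy inequality, and the coupling term $\re\langle Kv-Kw,G\ast(v-w)\rangle_{L^2(D)^6}$ is estimated by Cauchy-Schwarz and $[\operatorname{M3}]$. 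After Gronwall this yields a bound of the form
\[ \E\sup_{s\in[0,T_0]}\|Kv(s)-Kw(s)\|_{L^{2}(D)^6}^2\lesssim T_0\,\|G\|_{L^{\infty}(\Omega;L^{1}(0,T;B(L^{2}(D)^6)))}\,\E\sup_{s\in[0,T_0]}\|v(s)-w(s)\|_{L^{2}(D)^6}^2, \]
which is a contraction once $T_0$ is chosen small, by the very same computation as in Theorem \ref{stochastic_maxwell_unique_weak_solution}.

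Finally, I would patch the local solutions together exactly as in Theorem \ref{stochastic_maxwell_unique_weak_solution}: on the interval $[(n-1)T_0,nT_0]$ the convolution splits into the already-determined history $\int_{0}^{(n-1)T_0}G(\cdot-s)u(s)\operatorname{ds}$, which is a fixed source satisfying $[\operatorname{M4}]$ by the same computation and which cancels in the difference $K_nv-K_nw$, plus the running part $\int_{(n-1)T_0}^{\cdot}G(\cdot-s)v(s)\operatorname{ds}$. Since $T_0$ does not depend on the starting point, finitely many steps cover $[0,T]$, the resulting process carries the stated regularity, and uniqueness follows because any strong solution is in particular a weak solution, so that Theorem \ref{stochastic_maxwell_unique_weak_solution} applies.
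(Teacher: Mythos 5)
Your overall strategy (contraction for the retarded term, solver without convolution as the frozen-history map) is reasonable, and your verification that $G\ast v$ satisfies $[\operatorname{M4}]$ is correct and is indeed the key observation. But the gluing step has a genuine gap. On $[0,T_0]$ everything works because $u_0$ satisfies $[\operatorname{M2}]$. To restart on $[T_0,2T_0]$ with Proposition \ref{stochastic_maxwell_solution_without_retarded} you would need the new initial datum $u_1(T_0)$ to satisfy $[\operatorname{M2}]$ as well, i.e.\ $\E\|Mu_1(T_0)\|_{L^2(D)^6}^2+\E\|u_1(T_0)\|_{L^{2(q+1)}(D)^6}^{2(q+1)}<\infty$. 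The strong regularity you obtain on the first interval, namely $Mu_1\in L^{\frac{q+2}{q+1}}(\Omega\times[0,T_0]\times D)^6+L^{2}(\Omega;L^{\infty}(0,T_0;L^{2}(D)))^6$, does not give this: the $L^{\frac{q+2}{q+1}}$ component has no pointwise-in-time $L^2(\Omega\times D)$ trace (not even for a.e.\ $t$, since $\tfrac{q+2}{q+1}<2$), and $u_1\in L^{q+2}(\Omega\times[0,T_0]\times D)^6\cap L^{2}(\Omega;C(0,T_0;L^{2}(D)))^6$ does not put $u_1(T_0)$ in $L^{2(q+1)}(\Omega\times D)^6$. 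So the induction cannot be continued past $T_0$, and since the contraction constant behaves like $T_0\|G\|^2e^{CT_0}$ you cannot simply take $T_0=T$ either.

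The paper avoids this entirely by never re-running a fixed point at the strong level. It takes the \emph{global} weak solution $u$ on $[0,T]$ already provided by Theorem \ref{stochastic_maxwell_unique_weak_solution}, observes (exactly as in your second paragraph) that $G\ast u$ and $\partial_t(G\ast u)=G(0)u+\int_0^{\cdot}G'(\cdot-s)u(s)\,\mathrm{d}s$ lie in $L^{2}(\Omega\times[0,T]\times D)^6$ by $[\operatorname{M3}]$, so that $u$ is a weak solution of the equation with $G\equiv 0$ and source $J+G\ast u$ satisfying $[\operatorname{M4}]$, and then invokes Proposition \ref{stochastic_maxwell_solution_without_retarded} once, on all of $[0,T]$, with the original initial datum $u_0$ (which does satisfy $[\operatorname{M2}]$); uniqueness of weak solutions identifies $u$ with that strong solution and transfers the regularity. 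You can repair your argument by adopting this bootstrap in place of the iteration: your $[\operatorname{M4}]$-verification is precisely the computation needed, but it should be applied to the known global weak solution rather than used to set up a new contraction.
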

 \begin{proof}
Let $ u\in L^{q+2}(\Omega\times[0,T]\times D)^6\cap L^{2}(\Omega;C(0,T;L^2(D)^6)) $ be the unique weak solution of $ (\operatorname{MSEE}) $ from Proposition \eqref{stochastic_maxwell_unique_weak_solution}. The expression $ (G\ast u)(t)=\int_{0}^{t}G(t-s)u(s)\operatorname{ds} $ is differentiable in time with
\[ \partial_t(G\ast u)(t)=G(0)u(t)+\int_{0}^{t}G'(t-s)u(s)\operatorname{ds}. \]
By $[\operatorname{M5}], $ both $ (G\ast u) $ and $ \partial_t(G\ast u) $ are contained in $ L^{2}(\Omega\times [0,T]\times D)^6. $ Hence, $ u $ is a solution of $ (\operatorname{MSEE}) $ with the current $ G\ast u+J  $ satisfying $ [\operatorname{M4}]. $ Consequently, $ u $ has the regularity properties from Proposition \ref{stochastic_maxwell_solution_without_retarded}. This closes the proof.
 \end{proof}
 
    \section{Remarks and discussion}
In this section, we want to compare our results to the literature and we discuss some instructive special cases of our assumptions.\\ \\
First, we want to mention, that Roach, Stratis and Yannacopoulus already treated our equation in the deterministic setting in \cite{roach_stratis_yannacopoulus}. They claim in Theorem $ 11.3.14 $, that 
\begin{equation*}
\begin{cases}
&u'(t)=\kappa^{-1}Mu(t)-\kappa^{-1}|u(t)|^qu(t)+\kappa^{-1}(G\ast u)(t)+\kappa^{-1}J(t),\\
&u(0)=u_0
\end{cases}
\end{equation*}
has a unique strong solution $ u\in L^{q+2}(\Omega\times[0,T]\times D)^6 $ with $ Mu\in L^{\frac{q+2}{q+1}}(\Omega\times[0,T]\times D)^6 $ if $ D\subset\R^3 $ is a bounded Lipschitz domain and $ \kappa:D\to\R^{6\times 6} $ is a uniformly bounded and uniformly elliptic matrix with measurable dependence in space. Their idea is to make a Galerkin approximation with respect to an arbitrary orthonormal basis $ (h_n)_n $ of $ H(\curl,0)(D)\times H(\curl)(D) $, that is also a basis of $ L^{2}(D)^6 $. However, besides many inaccuracies, they make two severe mistakes. 

Beginning from $ (11.12) $ on page $ 239 $, they derive 
\[ \int_0^{T}\langle (G\ast u_n)(s),u_n(s)\rangle_{L^{2}(D)^6}\operatorname{ds}\to \int_0^{T}\langle (G\ast u)(s),u(s)\rangle_{L^{2}(D)^6}\operatorname{ds} \]
for $ n\to\infty $ as a consequence of the weak convergences of $ G\ast u_n\to G\ast u $ and $ u_n\to u $ in $ L^{2}([0,T]\times D)^6 $ as $ n\to\infty. $ However, such an argument is not available in general. Moreover, they in their a priori estimate for the approximating problem, they implicitly use $$ \|\sum_{j=1}^{n}\langle u_0,h_j\rangle_{L^{2}(D)^6}h_j\|_{L^{2(q+1)}(D)^6}\leq C\|u_0\|_{L^{2(q+1)}(D)^6} $$ with a constant independent of $ n\in\N $. \ref{stochastic_maxwell_transformation_approx_equation_energy}. However, this is not true in general. As far as we know, such a result is only known on the torus, with the choice $ h_n(x)=e^{inx} $. This is one of the reasons, why we had to use the operators $ S_n $ that are also bounded on $ L^{p}(D)^6. $ \\ \\
Getting back to our result, we want to point out that the restriction to $ q\in (1,2] $ only comes from the H\"older estimate $$ \|F'(y_n)S_{n-1}\widetilde{b_j}\|_{L^{2}(\Omega\times[0,T]\times D)^6}\leq \|y_n\|^{2q}_{L^{q+2}([0,T]\times D)}\|S_{n-1}\widetilde{b_j}\|_{L^{\frac{2(q+2)}{2-q}}(\Omega\times[0,T]\times D)^6}^2$$
in the proof of Proposition \ref{stochastic_maxwell_transformation_approx_equation_energy}. Hence, if one assumes $ b_j\equiv 0$ one gets the same result as in Theorem \ref{stochastic_maxwell_main_result} for all $ q\in (1,\infty). $ In particular, this is true for the deterministic equation. Hence, we gave a correct proof for the theorem of Roach, Stratis and Yannacopoulus if $ \kappa\equiv I $ and $ D $ is a bounded $ C^1 $-domain or $ D=\R^3. $\\ \\
Next, we want to comment on the odd-looking condition $$ P_{n}\big(b_i(s)e^{-i\sum_{j=1}^{N}B_j\beta_j(s)}\big)=b_i(s)e^{-i\sum_{j=1}^{N}B_j\beta_j(s)} $$ from $ [\operatorname{M5}]$ for $ n\in\N $ large enough and for all $ s\in [0,T] $, $ i=1,\dots,N $ in case that $ q=2 $. We need it in the proof of Proposition \ref{stochastic_maxwell_transformation_approx_equation_energy} for the estimate $$ \|S_n\big(b_i(s)e^{-i\sum_{j=1}^{N}}B_j\beta_j(s)\big)\|_{L^{\infty}(D)^6}\leq C\|b_i(s)e^{-i\sum_{j=1}^{N}}B_j\beta_j(s)\|_{L^{\infty}(D)^6} $$ 
with a constant independent of $ n\in\N. $ It might be possible to get this inequality without our restrictive assumption in special cases. However, we want to point out, that even in the case $ D=\R^3 $ the boundedness of $ S_n $ on $ L^{\infty}(D)^6 $ is wrong, since it would imply the boundedness of the Hilbert transform $ L^{\infty}(D). $ If the $ B_j $ are constant, the assumption reduces to $ P_nb_i(s)=b_i(s). $ If $ D=\R^{3} $, this means that the Fourier transform $ \widehat{b_i}(s) $ is compactly supported in a timely independent set. In case that $ D $ is a bounded $ C^1 $-domain, this means, that $ b_i $ is of the form
\[ b_i(s)=\sum_{k=1}^{M} b_i^{(k)}(s)h_k \]
for some scalar valued $ b_i^{(k)}:\Omega\times [0,T]\to \C. $ Here, $ h_k=(h_{k,1},h_{k,2}) $ and $ h_{k,1} $ and $ h_{k,1} $ are eigenvectors of the operators $ A^{(1)} $ and $ A^{(2)} $, we introduced in Proposition \ref{stochastic_maxwell_hodge_laplacian_gaussian_bounds}.\\ \\
Last but not least, we want to discuss, why we did not treat coefficients in front of the Maxwell operator. Our approach is based on the interplay between $ -M^2 $, $ \Delta_H $ and the Helmholtz projection $ P_H $. In fact, we showed $ -M^2=\Delta_H $ on $ R(P_H) $ and $ -M^2=0 $ in $ N(P_H)=N(M). $ One might say, that we added a self-adjoint operator $ A=-\operatorname{grad}\Div $ with $ N(A)=R(P_H) $ to $ -M^2 $, such that the sum, namely $ -\Delta_H, $ has generalized Gaussian bounds. 
If we now replace $ M $ with 
\[ M_{\varepsilon,\mu}\binom{u_1}{u_2}= \binom{\varepsilon(x)^{-1}\curl u_2}{-\mu(x)^{-1}\curl u_1} \]
with the same perfect conductor boundary condition $ u_1\times\nu=0 $ on $ \partial D, $ we end up with
\[ -M^2\binom{u_1}{u_2}= \binom{\varepsilon(x)^{-1}\curl \mu(x)^{-1}\curl u_1}{\mu(x)^{-1}\curl \varepsilon(x)^{-1}\curl u_2} \]
with the boundary condition $ u_1\times\nu=0 $ and $ \big(\varepsilon^{-1}\curl u_2\big)\times\nu =0 $ on $ \partial D $ and uniformly bounded, positive definite  and hermitian $ \varepsilon,\mu:D\to\C^{3\times 3} $. The operator $ -M^2 $ is then positiv and self-adjoint with respect to a weighted scalar product on $ L^{2}(D)^6, $ namely
\[ \langle v,w\rangle_{\varepsilon,\mu}:=\int_{D}\varepsilon(x)v_1(x)\cdot w_1(x)\operatorname{dx}+\int_{D}\mu(x)v_2(x)\cdot w_2(x)\operatorname{dx}. \]
We need a weighted version of the Helmholtz projection $ P_{\varepsilon,\nu} $. We project orthogonally with respect to $ \langle\cdot,\cdot\rangle_{\varepsilon,\mu} $ onto \[\Big \{(u_1,u_2)\in L^{2}(D)^6: \Div(\varepsilon u_1)=0,\Div (\mu u_2)=0 \text{ and } (\mu u_2)\cdot\nu=0\text{ on }\partial D\Big \}. \] 
If we define  
$$ A_{\varepsilon,\mu}\binom{u_1}{u_2}=-\binom{\operatorname{grad}\Div(\varepsilon u_1) }{\operatorname{grad}\Div (\mu u_2)}, $$
one calculates that $ A_{\varepsilon,\mu} $ is symmetric with respect to $ \langle\cdot,\cdot\rangle_{\varepsilon,\mu}. $ Moreover, $ -M_{\varepsilon,\mu}^2 $, $ -M_{\varepsilon,\mu}^2+A_{\varepsilon,\mu} $ and $ P_{\varepsilon,\mu} $ have the same relationship as their counterparts with $ \varepsilon=\mu=I. $ 

Hence, one has to show that $ -M_{\varepsilon,\mu}^2+A_{\varepsilon,\mu} $ on the domain
\begin{align*}
\Big\{ &\curl u_1,\curl u_2, \curl\mu^{-1}\curl u_1,\curl\varepsilon^{-1}\curl u_2\in L^{p}(D)^3, \Div (\varepsilon u_1)\in W_0^{1,p}(D),\\
&\Div (\mu u_2)\in W^{1,p}(D),u_1\times\nu =0,(\mu u_2)\cdot\nu=0,(\varepsilon^{-1}\curl u_2)\times \nu=0\text{ on }\partial D \Big \}.
\end{align*}
has generalized Gaussian bounds, if one wants to generalize our result. However, even in case of smooth $ \varepsilon,\mu $ and $ \partial D $, such a result is unknown so far.

    \subsection{Acknowledgement}
I gratefully acknowledge financial support by the Deutsche Forschungs\-gemeinschaft (DFG) through CRC 1173. Moreover, I thank my advisor Lutz Weis and Roland Schnaubelt for many useful discussions and for pointing out references on the subject. I am also grateful, that Peer Kunstmann answered many questions about the Hodge Laplacian and about generalized Gaussian bounds. Last but not least, I want to mention the help of Fabian Hornung and Christine Grathwohl. They read the article carefully and gave many useful comments.\newpage
    \bibliographystyle{abbrv} 
    \bibliography{Literaturverzeichnis}
%        \bibitem[80]{egert_dintelmann_personal_conversation} Personal conversation with Dr. Moritz Egert and Dr. Robert Haller-Dintelmann

\end{document}